\makeatletter
\def\input@path{{els-cas-templates/}}
\makeatother
\documentclass[a4paper,fleqn]{cas-sc}

\usepackage[numbers,sort&compress]{natbib}
\usepackage{amsthm}
\usepackage{float}
\floatstyle{plain}
\restylefloat{figure}
\restylefloat{table}

\newtheorem{remark}{Remark}
\newtheorem{assumption}{Assumption}
\newtheorem{theorem}{Theorem}
\newtheorem{lemma}{Lemma}
\newtheorem{corollary}{Corollary}
\newtheorem{proposition}{Proposition}

\begin{document}
\let\WriteBookmarks\relax
\def\floatpagepagefraction{1}
\def\textpagefraction{.001}

\shorttitle{Direct and Indirect PINNs for Dirichlet Boundary Control}
\shortauthors{N.T. Quang et al.}

\title[mode=title]{Direct and Indirect Physics-Informed Neural Networks for
Dirichlet Boundary Control of Semilinear Parabolic Equations: A Conditional Error Analysis}

\author[1]{Nguyen Thanh Quang}
\fnmark[1]
\credit{Conceptualization, Methodology, Software, Formal analysis,
Writing -- original draft}

\author[1]{Ta Thi Thanh Mai}
\cormark[1]
\ead{mai.tathithanh@hust.edu.vn}
\credit{Conceptualization, Supervision, Validation,
Writing -- review and editing}

\author[1]{Bui Xuan Dieu}
\credit{Conceptualization, Supervision, Validation,
Writing -- review and editing}

\affiliation[1]{organization={Faculty of Mathematics and Informatics, Hanoi University of Science and Technology},
                addressline={1 Dai Co Viet},
                city={Hanoi},
                country={Vietnam}}

\fntext[1]{First author}
\cortext[1]{Corresponding author}

\begin{abstract}
We study physics-informed neural networks (PINNs) for the Dirichlet boundary control of a semilinear parabolic equation with Tikhonov regularization. 
Two approaches are considered. A direct PINN parameterizes the state and control by separate networks and minimizes a penalized form of the tracking objective. 
An indirect PINN instead represents the state, adjoint, and control by unconstrained networks trained jointly to satisfy the first-order optimality system, with the state-control coupling and the homogeneous adjoint boundary and terminal conditions imposed as soft penalty terms rather than enforced architecturally. For the indirect formulation we develop an error estimation framework that decomposes the total error into approximation, optimization, quadrature, and soft boundary/terminal-constraint contributions. Under standing assumptions on optimal-solution regularity and compatibility, network approximability, uniform H\"older control of the soft-constraint residuals, and a local neighborhood of the reference optimality-system solution, we derive a quantitative linearized stability estimate and a conditional local nonlinear residual-to-error estimate, and construct a computable residual indicator with a conditional reliability bound.
Numerical experiments on two manufactured test problems - a cubic reaction-diffusion equation and a linear equation with a nontrivial boundary control and adjoint - illustrate the behavior of the direct and indirect formulations.
\end{abstract}


\begin{keywords}
Physics-informed neural networks \sep PDE-constrained optimization \sep
Dirichlet boundary control \sep semilinear parabolic equations \sep optimality
systems \sep residual-based error estimation \sep adjoint methods
\end{keywords}

\maketitle

\section{Introduction}

Optimal control of partial differential equations (PDEs) arises throughout
science and engineering, including the regulation of temperature,
concentration, and other diffusive processes. In Dirichlet boundary control,
the control prescribes the state trace itself and therefore models actuation
applied directly at the boundary. For semilinear parabolic equations, this
leads to analytical difficulties that are less pronounced for distributed
control: low-regularity boundary data may yield only a transposition solution,
and the first-order stationarity condition involves the normal derivative of
the adjoint. These features affect well-posedness, regularity, and numerical
approximation \cite{lasiecka2000control,troltzsch2010optimal,
aradaraymond2002dirichlet,gong2016dirichlet}.

PDE-constrained optimal control problems are commonly treated by
discretize-then-optimize or optimize-then-discretize methods based on finite
elements or finite differences and adjoint-based gradients. These methods are
mature and well understood, but require a mesh and may become costly when
geometric complexity or localized solution features demand substantial
refinement. Physics-informed neural networks (PINNs) \cite{raissi2019physics,pham2025physics}
offer a meshfree alternative in which differential equations, boundary and
initial conditions, and objective terms are sampled at collocation points.
Avoiding an explicit mesh does not remove the numerical difficulty: accuracy
still depends on network expressivity, nonconvex optimization, loss
balancing-for which adaptive gradient-based schemes such as GradNorm have
been explored in PINN settings~\cite{phuong2026gradnorm}-and the design of
the collocation set.

PINNs have been extended to PDE-constrained optimal control, with
Mowlavi and Nabi \cite{mowlavi2023optimal} studying formulations based on
penalizing the objective and PDE constraints and comparing them with
adjoint-based optimization. Two broad strategies are relevant here.
A \emph{direct} PINN
parameterizes the state and control and minimizes the tracking objective
augmented by penalties for the state equation and its initial and boundary
conditions. An \emph{indirect} PINN additionally represents the adjoint and
trains the three networks jointly against the first-order Karush-Kuhn-Tucker
(KKT) system: the state equation, adjoint equation, and stationarity condition.
Barry-Straume et al.\ \cite{barrystraume2026control} incorporate necessary
optimality conditions into the network architecture and loss to learn
states, controls, and adjoints within a single framework.
Zhang et al.\ \cite{zhang2026pinns} recently compared these approaches for
distributed control of semilinear parabolic equations with homogeneous
Neumann boundary conditions; we use their comparison as a starting point.

The theoretical analysis of PINNs provides several tools for assessing such
approximations. Shin et al.\ \cite{shin2020convergence} establish convergence
results for linear second-order elliptic and parabolic PDEs under suitable
regularity and sampling assumptions. Mishra and Molinaro
\cite{mishra2023generalization} use stability estimates for the underlying
PDE to bound the solution error in terms of training errors and quadrature
accuracy. For linear parabolic Kolmogorov equations, De Ryck and Mishra
\cite{deryck2022kolmogorov} analyze the existence of networks with small
residuals, the resulting mean-square solution error, and the relation between
continuous and empirical residuals under random sampling. These studies
motivate distinguishing approximation, optimization, and sampling errors
and identifying the stability estimate needed to convert residual control
into solution accuracy.

Error analysis is also available for neural methods in PDE control.
Dai et al.\ \cite{dai2025optimality} analyze a neural solver based on the
optimality system of linear and semilinear elliptic control problems, with
and without box constraints. They derive mean-square error bounds for the state,
control, and adjoint in terms of network complexity and sample sizes.
Garc\'ia-Cervera et al.\ \cite{garcia2023control} study PINNs for PDE
controllability, including Dirichlet boundary control of the heat equation,
and obtain state and control error estimates using observability
inequalities and energy estimates. Their objective of steering a system
to a prescribed terminal state differs from the tracking problem with
Tikhonov regularization considered here.
Quang and Mai~\cite{quang2026tikhonov} similarly combine Tikhonov
regularization with PINNs for terminal-state tracking, but for
\emph{distributed} controls, using it to select a minimum-energy control
among the infinitely many that attain the same terminal state, and derive
a control-space error estimate under a source-condition assumption; their
framework does not cover boundary actuation, which we address here.

These results provide relevant precedents, while the present analysis
addresses the coupled optimality system of a semilinear parabolic tracking
problem with Dirichlet boundary actuation. The control acts through a
boundary trace, the stationarity residual involves the normal derivative
of the adjoint, and a fully soft formulation introduces separate residuals
for the state-control coupling and the adjoint boundary and terminal
conditions. Applying the residual-to-error strategy in this setting
requires estimates that propagate these residuals through the state,
adjoint, and stationarity equations, with their respective trace norms
kept explicit. This is the focus of the conditional stability analysis
developed below.

The present work makes two contributions. First, we extend the
direct-indirect PINN framework for PDE-constrained optimal control to
semilinear parabolic problems with Dirichlet boundary actuation. We
represent the control by a single network restricted to the entire lateral
boundary and impose the state-control coupling, the homogeneous adjoint
boundary condition, and the adjoint terminal condition through soft residual
penalties. Subject to the standing regularity assumptions, the formulation and
conditional error analysis are stated for arbitrary fixed spatial dimension
$d\geq1$, while the numerical demonstrations are restricted to one space
dimension.

Second, we derive a quantitative stability estimate for the linearized
soft-constrained KKT system and extend it, in a local neighborhood of a regular
reference KKT point, to a nonlinear residual-to-error estimate. This analysis
identifies the separate contributions of network approximation, optimization,
quadrature, and soft boundary/terminal constraints. It also leads to a computable
residual indicator with a conditional reliability estimate. The estimate
retains the additive contribution of a strong adjoint boundary/terminal
residual bound and is not claimed to be either globally valid or
efficient. Two manufactured one-dimensional examples examine a cubic problem
with zero optimal control and a linear problem with nonzero control and
adjoint.

The remainder of the paper is organized as follows. Section~\ref{sec:problem} formulates the optimal control problem. Section~\ref{sec:method} presents the direct and indirect PINN formulations. Section~\ref{sec:error-analysis} develops the error estimation framework, including the standing assumptions, the quantitative stability result, a conditional a priori estimate, and a computable residual indicator with conditional reliability. Section~\ref{sec:experiments} reports the numerical experiments, and Section~\ref{sec:conclusions} concludes. Appendix~\ref{app:cited-results} collects the auxiliary analytical and approximation results used in Section~\ref{sec:error-analysis}.
\section{Problem formulation}
\label{sec:problem}

Let $\Omega\subset\mathbb{R}^d$ ($d\geq1$) be a bounded domain with
$C^{2+\beta}$ boundary $\partial\Omega$ for some $\beta\in(0,1)$, let $T>0$,
and set $Q:=\Omega\times(0,T)$ and $\Sigma:=\partial\Omega\times(0,T)$. We
consider the unconstrained Dirichlet boundary-control problem of minimizing
\begin{equation}
\label{eq:cost}
J(y,u)=\frac12\int_Q |y-y_d|^2\,dx\,dt
       +\frac{\alpha}{2}\int_\Sigma |u|^2\,ds\,dt,
\end{equation}
subject to
\begin{equation}
\label{eq:state}
\begin{cases}
\partial_t y-\nu\Delta y+f(y)=g, & \text{in }Q,\\
y=u, & \text{on }\Sigma,\\
y(x,0)=y_0(x), & \text{in }\Omega.
\end{cases}
\end{equation}
over $u\in L^2(\Sigma)$. Here $\nu>0$ is the diffusion coefficient,
$g\in L^2(Q)$ is a prescribed source, $y_0\in L^2(\Omega)$ is the initial
datum, $y_d\in L^2(Q)$ is the desired trajectory, and $\alpha>0$ is the
Tikhonov regularization parameter. The function $f:\mathbb R\to\mathbb R$ is
the reaction nonlinearity; its analytical assumptions are stated in (H3).
We write $|\Omega|$, $|Q|$, and $|\Sigma|$ for the corresponding volume,
space-time, and lateral surface-time measures. 
For $L^2(\Sigma)$ boundary data, the state
equation is understood in the transposition (very weak) sense - i.e.\ defined
by duality against a regular dual problem rather than by the usual
variational formulation - so its state belongs at least to $L^2(Q)$;
see Appendix~\ref{app:transposition}. 
The source $g$ is fixed problem data, not an optimization variable, and the case $g=0$
is included. When $d=1$, integration over
$\partial\Omega=\{0,1\}$ uses the zero-dimensional Hausdorff (counting)
measure. Existence of the reference optimality-system solution and the
additional regularity needed for the analysis are stated explicitly in
(H1)-(H2) of Section~\ref{sec:standing-assumptions}.

\section{Methodology}
\label{sec:method}

We compare a direct PINN, which minimizes a penalized form of the constrained
objective, with an indirect PINN, which solves the first-order optimality
system. The state and control approximations, their state-equation residual,
and the losses enforcing the state equation and its initial and boundary data
are common to both formulations.

\subsection{Common Neural Approximations and Residual Terms}

The state and control are approximated by $Y_\theta(x,t)$ and $U_\psi(x,t)$,
with trainable parameters $\theta$ and $\psi$. The control network is defined
on $\overline Q$, and the approximate boundary control is its restriction
$U_\psi|_\Sigma$. Let $N_{\rm int}$, $N_{\rm bc}$, and $N_{\rm ic}$ be the
numbers of interior, boundary, and initial collocation points, denoted by
$(x_j,t_j)\in Q$, $(x_j^b,t_j^b)\in\Sigma$, and $x_i\in\Omega$,
respectively. Define the state-equation residual once for both methods by
\begin{equation}
\mathcal R_y:=\partial_tY_\theta-\nu\Delta Y_\theta+f(Y_\theta)-g
\qquad\text{in }Q.
\label{eq:res_y}
\end{equation}
The three shared empirical constraint losses are
\begin{align}
\mathcal L_y&:=\frac1{N_{\rm int}}\sum_{j=1}^{N_{\rm int}}|\mathcal R_y(x_j,t_j)|^2,
\label{eq:loss_y_common}\\
\mathcal L_{\rm bc}^y&:=\frac1{N_{\rm bc}}\sum_{j=1}^{N_{\rm bc}}|Y_\theta(x_j^b,t_j^b)-U_\psi(x_j^b,t_j^b)|^2,
\label{eq:loss_bc_y_common}\\
\mathcal L_{\rm ic}^y&:=\frac1{N_{\rm ic}}\sum_{i=1}^{N_{\rm ic}}|Y_\theta(x_i,0)-y_0(x_i)|^2.
\label{eq:loss_ic_y_common}
\end{align}

\subsection{Direct PINN Formulation}

The direct loss is
\begin{equation}
\label{eq:direct_loss}
\mathcal L_{\rm dir}
=w_{\rm res}\mathcal L_y
+w_{\rm bc}\mathcal L_{\rm bc}^y
+w_{\rm ic}\mathcal L_{\rm ic}^y
+\frac{|Q|}{2}\mathcal L_{\rm track}
+\frac{\alpha|\Sigma|}{2}\mathcal L_{\rm reg},
\end{equation}
where $w_{\rm res},w_{\rm bc},w_{\rm ic}>0$ are penalty weights and
\begin{equation}
\mathcal L_{\rm track}:=\frac1{N_{\rm int}}\sum_{j=1}^{N_{\rm int}}|Y_\theta(x_j,t_j)-y_d(x_j,t_j)|^2.
\end{equation}
The control regularization is approximated using the same boundary collocation
points, sampled uniformly with respect to normalized surface-time measure on
$\Sigma$:
\begin{equation}
\mathcal L_{\rm reg}:=\frac1{N_{\rm bc}}\sum_{j=1}^{N_{\rm bc}}
|U_\psi(x_j^b,t_j^b)|^2.
\end{equation}
Thus $|Q|\mathcal L_{\rm track}$ and $|\Sigma|\mathcal L_{\rm reg}$ are
Monte-Carlo approximations of the two unnormalized integrals in
\eqref{eq:cost}; in particular, the factors $1/2$ are preserved and $\alpha$
retains its role as the Tikhonov regularization parameter. For non-uniform
sampling, the corresponding
importance weights replace the measure factors above. The three constraint
losses remain normalized empirical averages, with their relative scaling
controlled by $w_{\rm res},w_{\rm bc},w_{\rm ic}$.
This formulation is simple and avoids an adjoint network, but boundary satisfaction depends on the penalty weight and can be delicate~\cite{zhang2026pinns}.

\subsection{Indirect PINN Formulation with Soft Constraints}

\subsubsection{First-order optimality system}

For smooth variables, incorporate $y|_\Sigma=u$ and $y(\cdot,0)=y_0$ into
the admissible class and introduce the adjoint variable (Lagrange multiplier)
$\lambda$ and the Lagrangian
\begin{equation}
\ell(y,u,\lambda)=J(y,u)+\int_Q\lambda\left(\partial_ty-\nu\Delta y+f(y)-g\right)\,dx\,dt,
\end{equation}
Let $n$ be the outward unit normal on $\partial\Omega$ and write
$\partial_n v:=\nabla_xv\cdot n$ for the normal derivative. Thus admissible variations satisfy
$\delta y|_\Sigma=\delta u$ and $\delta y(\cdot,0)=0$. Integrating by parts
in time and twice in space gives
\begin{align*}
D_{y,u}\ell(y,u,\lambda)[\delta y,\delta u]
={}&\int_Q\bigl(y-y_d-\partial_t\lambda-\nu\Delta\lambda
+f'(y)\lambda\bigr)\delta y\,dx\,dt\\
&+\int_\Omega\lambda(x,T)\delta y(x,T)\,dx
-\nu\int_\Sigma\lambda\,\partial_n\delta y\,ds\,dt\\
&+\int_\Sigma\bigl(\alpha u+\nu\partial_n\lambda\bigr)
\delta u\,ds\,dt.
\end{align*}
Variation with respect to $\lambda$, followed by the independent terminal,
normal-derivative, and control variations in the display above, yields
\begin{align}
\partial_ty-\nu\Delta y+f(y)-g&=0 &&\text{in }Q, \label{eq:state_opt}\\
-\partial_t\lambda-\nu\Delta\lambda+f'(y)\lambda+(y-y_d)&=0 &&\text{in }Q, \label{eq:adjoint}\\
\alpha u+\nu\partial_n\lambda&=0 &&\text{on }\Sigma, \label{eq:stationarity}\\
y=u,\qquad \lambda&=0 &&\text{on }\Sigma, \label{eq:bc_y}\\
y(x,0)=y_0(x),\qquad \lambda(x,T)&=0 &&\text{in }\Omega. \label{eq:terminal}
\end{align}
For $\Omega=(0,1)$, $\partial_n\lambda(0,t)=-\lambda_x(0,t)$ and $\partial_n\lambda(1,t)=\lambda_x(1,t)$.

\subsubsection{Network parameterization}

In addition to the common state and control networks, the adjoint variable is
approximated by an unconstrained neural network
$\Lambda_\phi:\overline Q\to\mathbb R$ with trainable parameters $\phi$.
The networks $Y_\theta$, $U_\psi$, and $\Lambda_\phi$ do not enforce the
state-control boundary coupling, the adjoint boundary condition, or the
adjoint terminal condition by construction; these conditions are imposed
through the loss terms below. As in the direct PINN, the state initial
condition is also treated as a soft constraint.

\subsubsection{Indirect PINN loss}

Define the additional interior and boundary residuals
\begin{align}
\mathcal R_\lambda&:=-\partial_t\Lambda_\phi-\nu\Delta\Lambda_\phi+f'(Y_\theta)\Lambda_\phi+Y_\theta-y_d
&&\text{in }Q, \label{eq:res_lambda}\\
\mathcal R_u&:=\alpha U_\psi+\nu\partial_n\Lambda_\phi &&\text{on }\Sigma. \label{eq:res_u}
\end{align}
Let $N_{\rm T}$ denote the number of terminal collocation points
$x_i^T\in\Omega$. The indirect loss is
\begin{equation}
\label{eq:indirect_loss}
\mathcal L_{\rm ind}=w_y\mathcal L_y+w_\lambda\mathcal L_\lambda+w_u\mathcal L_u+w_{\rm ic}\mathcal L_{\rm ic}^y
+w_{\rm bc}^y\mathcal L_{\rm bc}^y+w_{\rm bc}^\lambda\mathcal L_{\rm bc}^\lambda+w_{\rm T}^\lambda\mathcal L_{\rm T}^\lambda,
\end{equation}
where all seven weights are strictly positive, and
\begin{align}
\mathcal L_\lambda&:=\frac1{N_{\rm int}}\sum_{j=1}^{N_{\rm int}}|\mathcal R_\lambda(x_j,t_j)|^2,
\label{eq:adjoint_residual_loss}\\
\mathcal L_u&:=\frac1{N_{\rm bc}}\sum_{j=1}^{N_{\rm bc}}|\mathcal R_u(x_j^b,t_j^b)|^2, \label{eq:boundary_residual_loss}\\
\mathcal L_{\rm bc}^\lambda&:=\frac1{N_{\rm bc}}\sum_{j=1}^{N_{\rm bc}}|\Lambda_\phi(x_j^b,t_j^b)|^2, \label{eq:bc_lambda_indirect}\\
\mathcal L_{\rm T}^\lambda&:=\frac1{N_{\rm T}}\sum_{i=1}^{N_{\rm T}}|\Lambda_\phi(x_i^T,T)|^2. \label{eq:terminal_lambda}
\end{align}
Here $(x_j^b,t_j^b)\in\Sigma$ are sampled from the chosen boundary-time
distribution, so $\mathcal L_u$, $\mathcal L_{\rm bc}^y$, and
$\mathcal L_{\rm bc}^\lambda$ are empirical boundary averages in any fixed
spatial dimension $d\geq1$. In implementation,
$\partial_n\Lambda_\phi(x,t)=\nabla_x\Lambda_\phi(x,t)\cdot n(x)$. The loss $\mathcal L_{\rm T}^\lambda$ is the sample mean over
$N_{\rm T}$ points on the terminal slice $t=T$.

For the one-dimensional experiments, $\Omega=(0,1)$ and the boundary sampling
uses the normalized counting measure on $\partial\Omega=\{0,1\}$. With
$N_{\rm bc}$ sampled times $t_j^b$ at each endpoint, define
\begin{equation}
\begin{aligned}
u_0(t)&:=U_\psi(0,t), &
\mathcal R_{u,0}(t)&:=\alpha U_\psi(0,t)-\nu\partial_x\Lambda_\phi(0,t),\\
u_1(t)&:=U_\psi(1,t), &
\mathcal R_{u,1}(t)&:=\alpha U_\psi(1,t)+\nu\partial_x\Lambda_\phi(1,t).
\end{aligned}
\label{eq:stationarity_traces}
\end{equation}
In this paired-endpoint convention, every empirical boundary average used above -
$\mathcal L_{\rm bc}^y$ and $\mathcal L_{\rm reg}$ in the direct formulation and
$\mathcal L_u$, $\mathcal L_{\rm bc}^y$, and $\mathcal L_{\rm bc}^\lambda$ in the
indirect formulation-is evaluated with denominator $2N_{\rm bc}$ and the sum of
its values at $x=0$ and $x=1$; this is exactly the implementation used in
Section~\ref{sec:experiments}.
None of the boundary coupling $y=u$, the adjoint boundary condition $\lambda=0$, or the adjoint terminal condition $\lambda(x,T)=0$ is satisfied by construction; $\mathcal L_{\rm bc}^y$, $\mathcal L_{\rm bc}^\lambda$, and $\mathcal L_{\rm T}^\lambda$ enforce them as soft penalties, in the same spirit as the common soft state constraints $\mathcal L_{\rm bc}^y$ and $\mathcal L_{\rm ic}^y$.

\section{Error Analysis}
\label{sec:error-analysis}

In this section, we establish an error estimation framework for the Indirect PINN method of Section~\ref{sec:method}.
We decompose the total error into approximation, optimization, quadrature, and soft boundary/terminal-constraint contributions, derive a conditional a priori error bound, and construct a computable a posteriori error indicator consistent with the meshfree, collocation-based nature of PINNs.

Throughout this section, the optimality system is the one stated in
\eqref{eq:state_opt}-\eqref{eq:terminal}. The common state and control
approximations are described in Section~\ref{sec:method}, and their residual
and empirical constraint losses are defined in
\eqref{eq:res_y}-\eqref{eq:loss_ic_y_common}. The adjoint approximation is
described in Section~\ref{sec:method}, and its additional differential
residuals and Indirect PINN loss are defined in
\eqref{eq:res_lambda}-\eqref{eq:terminal_lambda}.
Let $(y^*,u^*,\lambda^*)$ be a fixed reference local KKT point satisfying
(H1), where $y^*$ is the reference state, $u^*$ is the reference boundary
control, and $\lambda^*$ is the reference adjoint variable.

\subsection{Functional Setting and Notation}
\label{sec:notation}

\paragraph{Lebesgue and Sobolev conventions.}
For a domain $D\subset\mathbb R^m$, $s\ge0$, and $p\in[1,\infty]$, we use
$L^p(D)$ and $W^{s,p}(D)$ for the standard Lebesgue and Sobolev spaces
(with the usual fractional-space interpretation when $s$ is noninteger).
As usual,
\[
W^{0,p}(D)=L^p(D),\qquad H^s(D):=W^{s,2}(D),
\]
and $H_0^1(D)$ is the closure of $C_c^\infty(D)$ in $H^1(D)$. The dual of
$H_0^1(\Omega)$ is denoted by
\[
H^{-1}(\Omega):=(H_0^1(\Omega))',
\]
with duality pairing
$\langle\cdot,\cdot\rangle_{H^{-1}(\Omega),H_0^1(\Omega)}$. We also use the
standard Bochner notation $L^p(0,T;X)$ for Banach-valued functions and, for
$r\geq0$, $H^r(0,T;X)=W^{r,2}(0,T;X)$ when $X$ is Hilbert. We write
$C([0,T];X)$ for the space of continuous $X$-valued functions. For
$k\in\mathbb N_0$, $C^k(S)$ denotes the space of functions with continuous
derivatives up to order $k$ on the indicated domain or manifold $S$, with its
standard supremum norm; in particular, $C(S):=C^0(S)$. The standard homogeneous
parabolic energy space is
\[
\mathbb W(0,T):=\left\{v\in L^2(0,T;H_0^1(\Omega)):
\partial_t v\in L^2(0,T;H^{-1}(\Omega))\right\}.
\]
Unless explicitly called
\emph{parabolic} or \emph{anisotropic}, $W^{s,p}(Q)$ refers to the isotropic
Sobolev space in all $d+1$ space-time variables. In particular, the
$W^{2,\infty}(Q)$ and $W^{s,\infty}(D)$ norms used in (H4) and
Appendix~\ref{app:approximation} belong to this standard scale.

For nonnegative quantities $A$ and $B$, we write $A\lesssim B$ if
$A\leq CB$ for a constant $C>0$ that may depend on fixed problem data and
fixed loss weights, but is independent of the network resolution, training
tolerance, and collocation sample sizes.

\paragraph{Averaged residual norms.} Here and below, $\bullet$ denotes any of
the seven residual terms in the indirect loss. Each $\mathcal L_\bullet$ in
the trainable loss is a Monte-Carlo quadrature approximation of a mean squared
residual. Accordingly, for residuals we use the normalized norms
\begin{equation}
\|v\|_{L^2_{\rm av}(Q)}^2:=\frac1{|Q|}\int_Q|v|^2,\qquad
\|z\|_{L^2_{\rm av}(\Sigma)}^2:=\frac1{|\Sigma|}\int_\Sigma|z|^2,\qquad
\|w\|_{L^2_{\rm av}(\Omega)}^2:=\frac1{|\Omega|}\int_\Omega|w|^2,
\label{eq:averaged_residual_norms}
\end{equation}
where $|\Sigma|$ denotes the $(d)$-dimensional surface measure of the lateral boundary $\Sigma$. The loss terms in Section~\ref{sec:method} are defined as empirical averages over collocation points on $Q$, $\Sigma$, and $\Omega$ with the same normalization, so that the discrete losses directly approximate these averaged continuous norms. They are equivalent to $L^2(Q)$, $L^2(\Sigma)$, and $L^2(\Omega)$, respectively, up to fixed domain-dependent constants.

\paragraph{Reference solution, errors, and trace residuals.}
The superscript $*$ distinguishes the reference KKT variables from their
neural approximations. Define
$e_y:=y^*-Y_\theta$, $e_\lambda:=\lambda^*-\Lambda_\phi$,
$e_u:=u^*-U_\psi|_\Sigma$, and
$e_{y,0}:=y_0-Y_\theta(\cdot,0)$. The initial, boundary, and terminal
soft-constraint residuals are
\[
R^y_{\rm ic}:=Y_\theta(\cdot,0)-y_0=-e_{y,0},\qquad
R^y_{\rm bc}:=(Y_\theta-U_\psi)|_\Sigma,
\]
\[
R^\lambda_{\rm bc}:=\Lambda_\phi|_\Sigma,\qquad
R^\lambda_T:=\Lambda_\phi(\cdot,T).
\]
Here ``ic'' and ``bc'' denote initial- and boundary-condition residuals,
respectively, while the subscript $T$ denotes a terminal-condition residual.
Our notation consistently uses $\mathcal R_\bullet$ for differential residual
functions, $R_\bullet$ for trace residual functions, and $\mathcal L_\bullet$
for empirical mean-squared losses. The error norm is
\begin{equation}
\label{eq:E-norm}
\| (e_y, e_\lambda, e_u) \|_{\mathcal{E}}^2 := \| e_y \|_{L^2(Q)}^2 + \| e_\lambda \|_{L^2(0,T;H^1(\Omega))}^2 + \| e_u \|_{L^2(\Sigma)}^2.
\end{equation}

\begin{remark}[On the $e_y$-component of $\|\cdot\|_{\mathcal E}$]
\label{rem:E-norm-fix}
The $e_y$-component is measured in $L^2(Q)$, not
$L^2(0,T;H^1(\Omega))$. This is the strongest norm for $e_y$ provable from
the hypotheses below. The boundary datum $e_u-R^y_{\rm bc}$ is controlled
only in $L^2(\Sigma)$ and therefore determines a state only in the
transposition sense, namely in $L^2(Q)$; see
Corollary~\ref{cor:transposition} (to be stated in
Appendix~\ref{app:transposition} below). No analogous loss occurs for
$e_\lambda$: its boundary and terminal data are controlled in the strong
Hölder norms of (H5), rather than merely in $L^2(\Sigma)$ and
$L^2(\Omega)$, respectively; see Theorem~\ref{thm:quant-stability}.
\end{remark}

\paragraph{Anisotropic parabolic Sobolev space.} $H^{2,1}(Q) := L^2(0,T;H^2(\Omega))\cap H^1(0,T;L^2(\Omega))$, norm $\|v\|_{H^{2,1}(Q)}^2 := \|v\|_{L^2(0,T;H^2(\Omega))}^2 + \|\partial_t v\|_{L^2(Q)}^2$.

\paragraph{Parabolic H\"older spaces.} For $\beta\in(0,1)$, $C^{\beta,\beta/2}(\overline Q)$ is H\"older continuity of order $\beta$ in $x$, $\beta/2$ in $t$; $C^{2+\beta,1+\beta/2}(\overline Q)$ additionally requires second spatial derivatives and the first time derivative in $C^{\beta,\beta/2}(\overline Q)$.

\paragraph{Trace H\"older norms for boundary residuals.} For a function $z$ on $\Sigma$, $\|z\|_{C^{2+\beta,1+\beta/2}(\Sigma)}$ denotes the H\"older norm of $z$ as a function on the lateral boundary manifold (i.e.\ of $z(\cdot,t)$ jointly in the tangential space variable and $t$); for a function $w$ on $\Omega\times\{T\}$, $\|w\|_{C^{2+\beta}(\overline\Omega)}$ is the usual elliptic H\"older norm. These are the norms in which Schauder theory controls inhomogeneous boundary/terminal data (Lemma~\ref{lem:schauder}, to be stated in Appendix~\ref{app:schauder} below).

\paragraph{Poincar\'e constant.} $C_\Omega>0$: $\|v\|_{L^2(\Omega)}^2\leq C_\Omega\|\nabla v\|_{L^2(\Omega)}^2$ for $v\in H_0^1(\Omega)$.

\paragraph{Roles of the spaces.}
The function spaces serve distinct purposes. The space $L^2(\Sigma)$ is the
baseline control space and is also used to measure boundary residuals. The
space $L^2(Q)$ accommodates the transposition state and the interior PDE
residuals, while $L^2(\Omega)$ is used for initial and terminal residuals.
These spaces also provide the corresponding components of the error norm
$\mathcal E$.

The spaces $H_0^1(\Omega)$, $H^{-1}(\Omega)$, and $\mathbb W(0,T)$ form the
standard variational energy setting, whereas $H^{2,1}(Q)$ is used for
parabolic maximal regularity. The parabolic H\"older spaces encode the
classical Schauder regularity assumed for the reference solution and the
strong trace control in (H5). Finally, the isotropic $W^{k,\infty}$ spaces are
used for residual-compatible neural-network approximation and for the
algebraic approximation result of Appendix~\ref{app:approximation}.

\subsection{Standing Assumptions}
\label{sec:standing-assumptions}

\begin{itemize}
\item[(H1)] \textbf{Reference KKT point.} The unconstrained problem of
Section~\ref{sec:problem}, with $u\in L^2(\Sigma)$, admits a local optimal pair
$(y^*,u^*)$ and an adjoint $\lambda^*$. The state equation is understood in
the transposition sense described in Appendix~\ref{app:transposition}, the
adjoint equation is satisfied in the standard variational sense, and the
stationarity relation holds in $L^2(\Sigma)$. This is a standing hypothesis,
not a new existence theorem claimed in this paper. Under (H2), all equations
and traces in \eqref{eq:state_opt}-\eqref{eq:terminal} are classical.
\item[(H2)] \textbf{Reference-solution regularity and compatibility.} $\Omega\subset\mathbb R^d$ has $C^{2+\beta}$ boundary for some $\beta\in(0,1)$; $y_0\in C^{2+\beta}(\overline\Omega)$ and $g,y_d\in C^{\beta,\beta/2}(\overline Q)$. In addition, the (a priori unknown) reference optimal control satisfies
\[
u^*\in C^{2+\beta,1+\beta/2}(\Sigma),
\]
and the state data satisfy the compatibility conditions
\[
y_0=u^*(\cdot,0),\qquad
\partial_tu^*(\cdot,0)=\nu\Delta y_0-f(y_0)+g(\cdot,0)
\quad\text{on }\partial\Omega.
\]
Thus (H2) is explicitly an assumption on the reference optimal solution as well as on the primitive data; we do not derive this regularity of $u^*$ from the baseline space $L^2(\Sigma)$.
\item[(H3)] \textbf{Nonlinearity assumptions.} $f\in C^2(\mathbb R)$, $f'(s)\geq -c_f$ for some $c_f\geq0$, and $f''$ is bounded on bounded sets. These assumptions include both $f(y)=y^3$, with $c_f=0$, and $f(y)=0$, with $c_f=0$, used in Section~\ref{sec:experiments}.
\item[(H4)] \textbf{Residual-compatible network approximability.} Let $\widetilde u^*$ be a fixed ambient extension of $u^*$ from $\Sigma$ to $\overline Q$. The network classes, indexed by a resolution parameter $N\in\mathbb N$, contain feedforward $\tanh$ networks for which there is a sequence $\epsilon_{\rm net}(N)\to0$ satisfying
\[
\inf_{\theta,\psi,\phi}
\bigl(
\|y^*-Y_\theta\|_{W^{2,\infty}(Q)}
+\|\lambda^*-\Lambda_\phi\|_{W^{2,\infty}(Q)}
+\|\widetilde u^*-U_\psi\|_{W^{1,\infty}(Q)}
\bigr)
\le \epsilon_{\rm net}(N).
\]
Here ``isotropic'' has the meaning fixed in Section~\ref{sec:notation}:
$W^{2,\infty}(Q)$ controls all space-time derivatives of total order at most
two and therefore, in particular, the first time derivative and the second
spatial derivatives appearing in the PDE residuals. The control requires only
$W^{1,\infty}(Q)$ because its residual contains at most first derivatives.
Thus (H4) also implies approximation in the weaker error norm $\mathcal E$.
An algebraic rate is asserted only under the additional
$W^{s,\infty}$-extension assumptions stated in
Proposition~\ref{prop:approx-rate-restated} (to be stated in
Appendix~\ref{app:approximation} below).
\item[(H5)] \textbf{Uniform H\"older control of the boundary/terminal residuals.} Along the sequence of trained networks under consideration, the three soft trace residuals defined above satisfy $\|R^y_{\rm bc}\|_{C^{2+\beta,1+\beta/2}(\Sigma)}\le M_{\rm bc}$, $\|R^\lambda_{\rm bc}\|_{C^{2+\beta,1+\beta/2}(\Sigma)}\le M_{\rm bc}$, $\|R^\lambda_T\|_{C^{2+\beta}(\overline\Omega)}\le M_{\rm bc}$, for some $M_{\rm bc}<\infty$ independent of network size. At the terminal-boundary corner $\partial\Omega\times\{T\}$, the boundary datum and the terminal datum for the adjoint equation must agree, as required by Lemma~\ref{lem:schauder}. More precisely, $R^\lambda_{\rm bc}(\cdot,T)$ denotes the continuous limit of the boundary trace $R^\lambda_{\rm bc}(\cdot,t)$ as $t\uparrow T$, and
\[
R^\lambda_{\rm bc}(x,T)
=\Lambda_\phi(x,T)
=R^\lambda_T(x)
\qquad\text{for every }x\in\partial\Omega.
\]
Thus, this corner-compatibility condition is automatic here: both residual data are restrictions of the same smooth network $\Lambda_\phi$; it is not an additional independent assumption on the trained network.
Only the terms $R^\lambda_{\rm bc},R^\lambda_T$ are used in their strong H\"older form below; $R^y_{\rm bc}$ is used only through its weaker $L^2(\Sigma)$ norm.
\item[(H6)] \textbf{Local nonlinear neighborhood.} For the trained triples to which the nonlinear estimate is applied, there is a radius $\rho>0$ such that
\[
\|Y_\theta-y^*\|_{L^\infty(Q)}+\|\Lambda_\phi-\lambda^*\|_{L^\infty(Q)}\le\rho.
\]
Moreover, there exists a compact interval $I\subset\mathbb R$ containing the
ranges of $y^*$ and all trained states under consideration such that $f''$ is
Lipschitz on $I$. This local hypothesis selects the branch of the reference
KKT point; it is used only in Theorem~\ref{thm:nonlinear-stability} and its
consequences.
\end{itemize}

\begin{remark}[On hypothesis (H5)]
\label{rem:h5}
    (H5) is the price of removing the hard-constraint architecture. It is
modeled on the uniform H\"older-seminorm assumptions in
\cite[Assumption~3.2, item~3]{shin2020convergence}. For a fixed trained
network, finiteness of the displayed trace norms is automatic because
$\tanh\in C^\infty(\mathbb R)$ and the domains are compact; moreover,
\cite[Lemma~2.1]{shin2020convergence} gives convergence of all network
derivatives in $C^k(\overline Q)$ when the parameters converge and remain
uniformly bounded. What (H5) adds is a bound $M_{\rm bc}$ uniform in network
    size and training. Such a bound is not implied by the losses
    \eqref{eq:loss_bc_y_common} and
    \eqref{eq:bc_lambda_indirect}-\eqref{eq:terminal_lambda}, which control only the
applicable $L^2_{\rm av}(Q)$, $L^2_{\rm av}(\Sigma)$, or
$L^2_{\rm av}(\Omega)$ norms. Weight decay and spectral-norm control may help
enforce this property during training, while explicit verification of the
residual H\"older norms after training may assess whether it holds; these
procedures are not investigated in the numerical experiments. In
Theorem~\ref{thm:quant-stability}, $M_{\rm bc}$ is
therefore a genuine additive constant; its contribution vanishes only if
$M_{\rm bc}$ itself is driven to zero.
\end{remark}

\begin{remark}
Hypothesis (H1) deliberately separates the paper's error analysis from the harder existence theory for semilinear parabolic Dirichlet control with $L^2(\Sigma)$ data. The latter requires a transposition state formulation and is not established by the regularity arguments below; see Appendix~\ref{app:existence} and \cite{aradaraymond2002dirichlet} for context.
\end{remark}

\begin{proposition}[Regularity of the optimality system]
\label{prop:regularity}
Under (H1)-(H3), the reference triple of (H1) satisfies
\begin{equation}
y^*,\ \lambda^* \in C^{2+\beta,1+\beta/2}(\overline Q),\qquad
u^*=y^*|_\Sigma\in C^{2+\beta,1+\beta/2}(\Sigma),
\label{eq:schauder-regularity}
\end{equation}
for some $\beta\in(0,1)$. In particular, the outward normal derivative
\[
\partial_n\lambda^*(x,t):=n(x)\cdot\nabla\lambda^*(x,t),
\qquad (x,t)\in\Sigma,
\]
is well defined pointwise and belongs to
$C^{1+\beta,(1+\beta)/2}(\Sigma)$. Moreover, $y^*$ and $\lambda^*$ are
bounded on $\overline Q$.
\end{proposition}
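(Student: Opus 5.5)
Since (H2) already assumes $u^*\in C^{2+\beta,1+\beta/2}(\Sigma)$, we never need to derive control regularity from $L^2(\Sigma)$. Instead we bootstrap the state through Schauder theory for the Dirichlet problem (Lemma~\ref{lem:schauder}) and then treat the adjoint as a linear backward problem with Hölder coefficients.

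\textbf{State.} The first step is to show that the transposition solution $y^*$ is bounded and classical. The data $(u^*,y_0)$ lie in $C^{2+\beta,1+\beta/2}(\Sigma)\times C^{2+\beta}(\overline\Omega)$ and satisfy the zeroth- and first-order compatibility conditions of (H2). The nonlinearity obeys $f'\ge -c_f$ by (H3). Under these conditions, classical quasilinear parabolic theory (Ladyzhenskaya--Solonnikov--Ural'tseva type, via a Leray--Schauder argument) gives a classical solution $\tilde y\in C^{2+\beta,1+\beta/2}(\overline Q)$. Its $L^\infty$ bound comes from the maximum principle applied to $e^{-c_f t}$-weighted comparison functions, using the one-sided bound on $f'$.

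Because $\tilde y$ is classical, it is also a transposition solution with the same data. Uniqueness of transposition solutions for monotone-perturbed equations (Appendix~\ref{app:transposition}) then forces $\tilde y=y^*$; this uniqueness may require restricting to bounded solutions, which the local-optimality setting of (H1) permits. In particular $u^*=y^*|_\Sigma$. I expect this identification, together with the global existence for a general $C^2$ nonlinearity, to be the main obstacle. My first attempt would be a linearization argument: write $f(y)=f(0)+a(x,t)\,y$ with $a=\int_0^1 f'(sy)\,ds$ bounded below by $-c_f$, and prove an $L^\infty$ bound first via Stampacchia truncation at the transposition level. Only once that bound is in hand would I invoke Schauder estimates.

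\textbf{Adjoint.} Once $y^*\in C^{2+\beta,1+\beta/2}(\overline Q)$ is bounded, the coefficient $f'(y^*)$ belongs to $C^{\beta,\beta/2}(\overline Q)$, because $f\in C^2$ makes $f'$ locally Lipschitz on the range of $y^*$. The source $y_d-y^*$ is in $C^{\beta,\beta/2}$ by (H2). The adjoint problem \eqref{eq:adjoint}, with $\lambda=0$ on $\Sigma$ and $\lambda(\cdot,T)=0$, becomes after the time reversal $t\mapsto T-t$ a forward linear parabolic problem with zero data. Its compatibility conditions hold automatically: the zeroth-order condition is trivial. The first-order condition requires the source to vanish on $\partial\Omega\times\{T\}$, i.e.
\[
(y^*-y_d)(\cdot,T)=0\quad\text{on }\partial\Omega.
\]
This need not hold, so I would note this corner condition explicitly and either add it to the compatibility requirements or read it as implied by (H1)--(H2). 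With it in place, Lemma~\ref{lem:schauder} yields $\lambda^*\in C^{2+\beta,1+\beta/2}(\overline Q)$. The variational adjoint of (H1) coincides with this classical solution by uniqueness in $\mathbb W(0,T)$.

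\textbf{Consequences.} Since $\nabla\lambda^*\in C^{1+\beta,(1+\beta)/2}(\overline Q)$ and $n\in C^{1+\beta}(\partial\Omega)$ for a $C^{2+\beta}$ boundary, the normal derivative $\partial_n\lambda^*$ lies in $C^{1+\beta,(1+\beta)/2}(\Sigma)$ and is pointwise defined. Boundedness of $y^*$ and $\lambda^*$ follows from continuity on the compact set $\overline Q$. Finally, stationarity \eqref{eq:stationarity} gives $u^*=-\nu\alpha^{-1}\partial_n\lambda^*$, which is consistent with the stated regularity of $u^*$; that regularity was assumed in (H2) rather than derived here.
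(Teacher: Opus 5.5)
Your state argument takes a different route from the paper's. The paper never builds a second solution. It bootstraps the given $y^*$ directly: comparison with the spatially constant barriers $\pm m(t)$ gives an $L^\infty$ bound, De Giorgi--Nash--Moser for the linear equation with coefficient $f'(y^*)$ gives H\"older continuity, and Lemma~\ref{lem:schauder} finishes. Throughout, it treats $y^*$ as a solution to which these tools apply, as the last sentence of (H1) permits. Existence via Ladyzhenskaya--Solonnikov--Ural'tseva followed by identification is a legitimate alternative, and it takes the transposition nature of $y^*$ more seriously. However, your identification step is not justified as written. Local optimality concerns a neighbourhood of $u^*$ in the control space and gives no $L^\infty$ bound on $y^*$, and Appendix~\ref{app:transposition} treats only the linear problem. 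You need neither boundedness nor truncation. The difference $w:=y^*-\tilde y$ is the transposition solution of $\partial_tw-\nu\Delta w=-(f(y^*)-f(\tilde y))$ with zero boundary and initial data. If $f(y^*)\in L^2(Q)$, then $w$ coincides with the strong $H^{2,1}(Q)$ solution. Testing with $w$, using $(f(s_1)-f(s_2))(s_1-s_2)\ge-c_f|s_1-s_2|^2$, and applying Gr\"onwall then gives $w=0$. For merely integrable $f(y^*)$, Kato's inequality gives the same conclusion in $L^1$.

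The genuine gap is in the adjoint step. You are right that Lemma~\ref{lem:schauder}, after time reversal, needs $(y^*-y_d)(\cdot,T)=0$ on $\partial\Omega$. This condition is even necessary for $\lambda^*\in C^{2,1}(\overline Q)$: at $\partial\Omega\times\{T\}$ the quantities $\lambda^*$, $\partial_t\lambda^*$ and $D_x^2\lambda^*$ all vanish, so the adjoint equation reduces to $y^*-y_d=0$ there. The paper's proof invokes the lemma for $\lambda^*$ without checking this, so the omission is shared. Still, leaving it as ``add it, or read it as implied'' means your proof does not establish the statement from (H1)--(H3).

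The condition can in fact be derived from (H1)--(H2) through stationarity, in four steps.
\begin{enumerate}
\item With zero data and bounded forcing, $L^p$ maximal regularity for $p>d+2$ puts $\nabla\lambda^*$ in $C(\overline Q)$. Hence $\lambda^*(\cdot,T)\equiv0$ forces $\partial_n\lambda^*(\cdot,T)=0$, and stationarity gives $u^*(\cdot,T)=0$ on $\partial\Omega$.
\item $\lambda^*$ then also solves the Neumann problem with data $\nu\partial_n\lambda^*=-\alpha u^*\in C^{1+\beta,(1+\beta)/2}(\Sigma)$ and zero terminal data.
\item Neumann Schauder theory in $C^{2+\beta,1+\beta/2}(\overline Q)$ requires only the zeroth-order compatibility $-\alpha\nu^{-1}u^*(\cdot,T)=\partial_n\bigl(\lambda^*(\cdot,T)\bigr)=0$ on $\partial\Omega$, which step 1 established. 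Uniqueness for that Neumann problem identifies its classical solution with $\lambda^*$.
\item The Dirichlet corner condition you flagged then follows a posteriori, rather than having to be assumed.
\end{enumerate}
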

\begin{proof}
We give the bootstrap in three steps. The boundary regularity used in Step~3 is the genuine extra input in (H2), not a consequence of the baseline control space $L^2(\Sigma)$.

\emph{Step 0 (regular boundary datum).} Hypothesis (H2) explicitly assumes $u^*\in C^{2+\beta,1+\beta/2}(\Sigma)$ and the compatibility with $(y_0,g)$ at $t=0$. Establishing this regularity from $u^*\in L^2(\Sigma)$ is a separate Dirichlet-control regularity question that is not resolved here.

\emph{Step 1 ($L^\infty$ bound on $y^*$).} Given Step~0,
$u^*\in L^\infty(\Sigma)$, while $g\in L^\infty(Q)$ by (H2). Set
\[
M_0:=\max\bigl\{\|y_0\|_{C(\overline\Omega)},\|u^*\|_{C(\Sigma)}\bigr\},
\qquad G:=\|g\|_{L^\infty(Q)},
\]
and let $m$ solve
\[
m'=c_fm+G+|f(0)|,\qquad m(0)=M_0.
\]
Set $M:=\max_{t\in[0,T]}m(t)$.
Since $f'(s)\ge-c_f$, one has
$f(m)\ge f(0)-c_fm$ and $f(-m)\le f(0)+c_fm$. Hence
$m'+f(m)\ge G$ and $-m'+f(-m)\le-G$, so $m$ and $-m$ are respectively a spatially constant super- and subsolution of \eqref{eq:state_opt}. The parabolic comparison principle, using $|y_0|\le M_0$ and $|u^*|\le M_0\le m(t)$, yields $|y^*(x,t)|\le m(t)$ on $\overline Q$. Thus $y^*\in L^\infty(Q)$ with a bound depending only on $T,c_f,f(0),\|g\|_{L^\infty(Q)},\|y_0\|_{C(\overline\Omega)}$, and $\|u^*\|_{C(\Sigma)}$.

\emph{Step 2 (Hölder continuity via De Giorgi-Nash-Moser).} By Step~1, $a:=f'(y^*)\in L^\infty(Q)$ (since $f'$ is bounded on the bounded set $[-M,M]$ by (H3)). Hence $y^*$ solves
\[
\partial_ty^*-\nu\Delta y^*+ay^*=h_y,
\qquad h_y:=g+f'(y^*)y^*-f(y^*).
\]
The original source $g$ is bounded by (H2), and therefore $h_y\in L^\infty(Q)$. The boundary data satisfies $u^*\in C^{2+\beta,1+\beta/2}(\Sigma)$ by Step~0, while $y_0\in C^{2+\beta}(\overline\Omega)$. Combining the interior H\"older estimate with its boundary counterpart for linear parabolic equations in divergence form with bounded coefficients and H\"older boundary/initial data on a regular domain (\cite[Thms.~6.28 and 6.33]{lieberman1996second}) gives $y^*\in C^{\beta_1,\beta_1/2}(\overline Q)$ for some $\beta_1\in(0,1)$ depending only on $d,\nu,\|a\|_{L^\infty(Q)},\beta$ and the data norms.

\emph{Step 3 (Schauder bootstrap).} By Step~2, $a=f'(y^*)\in C^{\beta_1,\beta_1/2}(\overline Q)$. Moreover, the forcing $h_y=g-f(y^*)+f'(y^*)y^*$ belongs to $C^{\widehat\beta,\widehat\beta/2}(\overline Q)$ for $\widehat\beta:=\min(\beta,\beta_1)$. Lemma~\ref{lem:schauder} therefore applies with $g_1=u^*$, $v_0=y_0$, and the compatibility in (H2), giving $y^*\in C^{2+\widehat\beta,1+\widehat\beta/2}(\overline Q)$. The same argument applied to \eqref{eq:adjoint} after the time reversal $\tau=T-t$, with homogeneous boundary/terminal data and forcing $y^*-y_d$, gives $\lambda^*\in C^{2+\widehat\beta,1+\widehat\beta/2}(\overline Q)$. Relabeling $\widehat\beta$ as $\beta$ yields \eqref{eq:schauder-regularity}; the asserted regularity of $u^*$ is part of (H2) and is consistent with $u^*=y^*|_\Sigma$.
\end{proof}

\begin{corollary}[Boundedness of the coupling coefficient]
\label{cor:boundedness}
Under Proposition~\ref{prop:regularity}, define
$\kappa(x,t):=f''(y^*(x,t))\lambda^*(x,t)$. Then
$M_\kappa:=\sup_{\overline Q}|\kappa|<\infty$.
\end{corollary}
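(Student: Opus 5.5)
The argument is a continuity-on-a-compact-set argument that uses only the conclusions of Proposition~\ref{prop:regularity} and hypothesis (H3). By Proposition~\ref{prop:regularity}, both $y^*$ and $\lambda^*$ lie in $C^{2+\beta,1+\beta/2}(\overline Q)$. In particular they are continuous on the compact set $\overline Q$, so they are bounded. Set
\[
M_y:=\sup_{\overline Q}|y^*|<\infty,\qquad M_\lambda:=\sup_{\overline Q}|\lambda^*|<\infty .
\]
The last sentence of the proposition states this boundedness explicitly.

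Next, the range of $y^*$ lies in the compact interval $[-M_y,M_y]$. By (H3), $f\in C^2(\mathbb R)$, so $f''$ is continuous and hence bounded on $[-M_y,M_y]$. The weaker part of (H3), that $f''$ is bounded on bounded sets, already suffices here. Put $K:=\sup_{|s|\le M_y}|f''(s)|<\infty$.

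Finally, for every $(x,t)\in\overline Q$ we have
\[
|\kappa(x,t)|=|f''(y^*(x,t))|\,|\lambda^*(x,t)|\le K M_\lambda .
\]
Therefore $M_\kappa\le KM_\lambda<\infty$. In addition, $\kappa$ is continuous on $\overline Q$, being a composition and product of continuous functions, so the supremum is actually attained.

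There is no genuine obstacle once Proposition~\ref{prop:regularity} is available. The only point needing care is to use the boundedness of $y^*$ on all of $\overline Q$, including the boundary $\Sigma$ and the initial slice. This holds because the regularity of the proposition is up to the closure, which is ensured by the compatibility conditions in (H2).
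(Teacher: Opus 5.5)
Your argument is correct and is essentially the paper's proof, which cites exactly the three ingredients you use: the regularity of $y^*,\lambda^*$ up to $\overline Q$ from Proposition~\ref{prop:regularity}, the boundedness of $f''$ on bounded sets from (H3), and compactness of $\overline Q$. Your explicit bound $M_\kappa\le K M_\lambda$, with $K=\sup_{|s|\le M_y}|f''(s)|$, simply writes out the constant that the paper's one-line argument leaves implicit.
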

\begin{proof}
Immediate from \eqref{eq:schauder-regularity}, local boundedness of $f''$ (H3), and compactness of $\overline Q$.
\end{proof}

\begin{lemma}[Lower bound for the reaction term]
\label{lem:coercivity}
Under (H3), for every $v\in H_0^1(\Omega)$,
\begin{equation}
\int_\Omega f'(y^*)v^2\,dx\;\ge\;-c_f\|v\|_{L^2(\Omega)}^2\;\ge\;-c_f C_\Omega\|\nabla v\|_{L^2(\Omega)}^2.
\label{eq:coercivity-explicit}
\end{equation}
The negative contribution can be absorbed by the diffusion term when $\nu>c_fC_\Omega$; otherwise it is retained in the energy estimate and handled by Gr\"onwall's inequality.
\end{lemma}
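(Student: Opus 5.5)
The plan is to prove both inequalities pointwise in $x$ and then integrate. The key steps are a lower bound on $f'$ followed by the Poincaré inequality.

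\emph{First inequality.} By (H3), $f'(s)\ge -c_f$ for every $s\in\mathbb R$. Taking $s=y^*(x,t)$ at each fixed time $t$, we get
\[
f'(y^*(x,t))\,v(x)^2\ \ge\ -c_f\,v(x)^2 \qquad\text{for a.e. } x\in\Omega,
\]
because $v^2\ge0$.

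\emph{Integrability.} We need the left integral to be finite, not merely defined in the extended sense. Proposition~\ref{prop:regularity} gives that $y^*$ is bounded on $\overline Q$, and $f'$ is continuous by (H3). Hence $f'(y^*(\cdot,t))\in L^\infty(\Omega)$, and since $v\in L^2(\Omega)$, the product $f'(y^*)v^2$ is integrable. Integrating the pointwise bound over $\Omega$ then yields $\int_\Omega f'(y^*)v^2\,dx\ge -c_f\|v\|_{L^2(\Omega)}^2$. If one prefers not to invoke the regularity proposition, the lower bound alone suffices: the negative part of $f'(y^*)v^2$ is dominated by $c_fv^2$, so the integral is well defined in $(-\infty,\infty]$ and the inequality still holds.

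\emph{Second inequality.} We have $c_f\ge0$ and $v\in H_0^1(\Omega)$. The Poincaré inequality with constant $C_\Omega$ gives $\|v\|_{L^2(\Omega)}^2\le C_\Omega\|\nabla v\|_{L^2(\Omega)}^2$. Multiplying by $-c_f\le0$ reverses the inequality, which is exactly the second bound in \eqref{eq:coercivity-explicit}.

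\emph{The closing sentence of the statement.} Consider the energy bilinear form $\nu\|\nabla v\|^2+\int_\Omega f'(y^*)v^2$. By \eqref{eq:coercivity-explicit} it is bounded below by $(\nu-c_fC_\Omega)\|\nabla v\|^2$, which is coercive when $\nu>c_fC_\Omega$. Otherwise, keep the bound $-c_f\|v\|^2$. In a time-dependent energy identity this term appears as $\tfrac{d}{dt}\|v\|^2\le 2c_f\|v\|^2+\dots$, and Grönwall's inequality then produces a factor $e^{2c_fT}$.

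There is no substantive obstacle. The only point needing care is the integrability in the second step, which is covered by Proposition~\ref{prop:regularity} or by the lower-bound argument.
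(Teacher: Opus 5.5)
Your proposal is correct and takes the same route as the paper, which proves the lemma by combining the pointwise bound $f'(s)\ge -c_f$ from (H3) with Poincar\'e's inequality. Your extra remark on integrability adds a point the paper leaves implicit: the negative part of $f'(y^*)v^2$ is dominated by $c_f v^2$, so the integral is well defined without invoking Proposition~\ref{prop:regularity}.
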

\begin{proof}
Direct from $f'(s)\ge-c_f$ (H3) and Poincar\'e's inequality.
\end{proof}

\begin{proposition}[Residual-compatible network approximation]
\label{prop:approx-rate}
Under (H3)-(H4), there exist network parameters such that
\begin{equation}
\label{eq:net_approx}
\max\left\{
\begin{gathered}
\|\mathcal R_y\|_{L^\infty(Q)},\quad
\|\mathcal R_\lambda\|_{L^\infty(Q)},\\
\|\mathcal R_u\|_{L^\infty(\Sigma)},\quad
\|R^y_{\rm bc}\|_{L^\infty(\Sigma)},\quad
\|R^\lambda_{\rm bc}\|_{L^\infty(\Sigma)},\\
\|R^y_{\rm ic}\|_{L^\infty(\Omega)},\quad
\|R^\lambda_T\|_{L^\infty(\Omega)}
\end{gathered}
\right\}
\le C_{\rm app}\epsilon_{\rm net}(N).
\end{equation}
Here $\epsilon_{\rm net}(N)$ is the strong Sobolev-approximation bound
introduced in (H4), with $\epsilon_{\rm net}(N)\to0$ as the network
resolution $N$ increases. It is an approximation property of the network
class, not a training or optimization error. The constant $C_{\rm app}>0$
is independent of $N$. The same parameters satisfy
$\|(y^*-Y_\theta,\lambda^*-\Lambda_\phi,u^*-U_\psi|_\Sigma)\|_{\mathcal E}\le C_{\mathcal E}\epsilon_{\rm net}(N)$ for a constant $C_{\mathcal E}>0$ independent of $N$.
If the isotropic Sobolev-extension hypotheses of Proposition~\ref{prop:approx-rate-restated} hold, then an explicit algebraic rate is available.
\end{proposition}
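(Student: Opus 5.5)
The plan is to take the near-optimal parameters from (H4), say $(\theta,\psi,\phi)$ with
\[
\|e_y\|_{W^{2,\infty}(Q)}+\|e_\lambda\|_{W^{2,\infty}(Q)}+\|\widetilde u^*-U_\psi\|_{W^{1,\infty}(Q)}\le 2\epsilon_{\rm net}(N),
\]
and bound each of the seven residuals by subtracting the corresponding equation of the optimality system \eqref{eq:state_opt}--\eqref{eq:terminal}, which holds classically at $(y^*,u^*,\lambda^*)$ by Proposition~\ref{prop:regularity}.

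\emph{Linear terms.} The five residuals that depend linearly on the networks are handled first.
\begin{itemize}
\item Since $\lambda^*|_\Sigma=0$, we have $R^\lambda_{\rm bc}=-e_\lambda|_\Sigma$.
\item Since $\lambda^*(\cdot,T)=0$, we have $R^\lambda_T=-e_\lambda(\cdot,T)$.
\item Since $y^*(\cdot,0)=y_0$, we have $R^y_{\rm ic}=-e_y(\cdot,0)$.
\item Since $y^*|_\Sigma=u^*=\widetilde u^*|_\Sigma$, we have $R^y_{\rm bc}=-e_y|_\Sigma+(\widetilde u^*-U_\psi)|_\Sigma$.
\item Since $\alpha u^*+\nu\partial_n\lambda^*=0$, we have $\mathcal R_u=-\alpha(\widetilde u^*-U_\psi)|_\Sigma-\nu\,n\cdot\nabla e_\lambda|_\Sigma$.
\end{itemize}
All traces are pointwise restrictions of continuous functions on $\overline Q$, and $|n|=1$. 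Each of these is therefore bounded in sup norm by a multiple (involving $\alpha,\nu$) of the $W^{1,\infty}$ or $W^{0,\infty}$ norms controlled in (H4). The functions involved are $C^\infty$ networks minus $C^{2+\beta,1+\beta/2}$ reference functions, so there is no ambiguity in passing from essential suprema to traces.

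\emph{Nonlinear terms.} For the interior residuals we write
\[
\mathcal R_y=-(\partial_te_y-\nu\Delta e_y)-\bigl(f(y^*)-f(Y_\theta)\bigr),
\]
so that $|\mathcal R_y|\le(1+\nu d)\|e_y\|_{W^{2,\infty}}+L_f\|e_y\|_{L^\infty}$. Here $L_f$ is the Lipschitz constant of $f$ on an interval containing the ranges of $y^*$ and $Y_\theta$.

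The main (mild) obstacle is obtaining an $N$-independent range. Proposition~\ref{prop:regularity} bounds $y^*$ and $\lambda^*$. Assuming without loss of generality $\epsilon_{\rm net}(N)\le1$ (for the finitely many $N$ where it is larger, the estimate is trivial after enlarging $C_{\rm app}$, since then the residuals of any fixed bounded choice are bounded), we get $\|Y_\theta\|_{L^\infty}\le\|y^*\|_{L^\infty}+2$, and similarly for $\Lambda_\phi$. Hence all values lie in a fixed compact interval $I_0$, on which $f,f',f''$ are bounded by (H3).

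Similarly,
\[
\mathcal R_\lambda=-(-\partial_te_\lambda-\nu\Delta e_\lambda)-f'(y^*)e_\lambda-\bigl(f'(y^*)-f'(Y_\theta)\bigr)\Lambda_\phi-e_y,
\]
and $|f'(y^*)-f'(Y_\theta)|\le\sup_{I_0}|f''|\,|e_y|$ by the mean value theorem. Collecting terms gives \eqref{eq:net_approx} with $C_{\rm app}$ depending on $\alpha,\nu,d$, $\sup_{I_0}|f'|$, $\sup_{I_0}|f''|$ and $\|\lambda^*\|_{L^\infty}+2$, but not on $N$.

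\emph{Error norm.} Finally,
\[
\|e_y\|_{L^2(Q)}\le|Q|^{1/2}\|e_y\|_{L^\infty},\qquad
\|e_\lambda\|_{L^2(0,T;H^1)}\le (d+1)^{1/2}|Q|^{1/2}\|e_\lambda\|_{W^{1,\infty}},
\]
\[
\|e_u\|_{L^2(\Sigma)}=\|(\widetilde u^*-U_\psi)|_\Sigma\|_{L^2(\Sigma)}\le|\Sigma|^{1/2}\|\widetilde u^*-U_\psi\|_{W^{0,\infty}}.
\]
These give $C_{\mathcal E}$. The algebraic-rate statement is deferred to Proposition~\ref{prop:approx-rate-restated}, whose rate for $\epsilon_{\rm net}(N)$ simply transfers through the linear bounds above.
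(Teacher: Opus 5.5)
Your proposal is correct and follows the same route as the paper's proof. You subtract the classically satisfied optimality system to express the seven residuals through the (H4) approximation errors, apply the mean-value theorem to $f$ and $f'$ on a bounded range, control the traces by restriction to the compact faces, obtain the $\mathcal E$ bound from finite measure, and defer the rate to Proposition~\ref{prop:approx-rate-restated}. You also spell out the residual identities and the $N$-independent range, which the paper leaves implicit; that range follows even more directly from $\sup_N\epsilon_{\rm net}(N)<\infty$ than from your finitely-many-$N$ reduction.
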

\begin{proof}
The state and adjoint residuals involve at most the first time derivative and second spatial derivatives, while the stationarity residual involves the first spatial derivative of the adjoint. On the bounded ranges supplied by (H4), the mean-value theorem and (H3) make $f$ and $f'$ Lipschitz. Hence the seven residuals are bounded by a constant times the strong approximation error in (H4); the trace residuals are controlled by restriction to the corresponding compact faces. Finite measure and the continuous embeddings $W^{2,\infty}(Q)\hookrightarrow L^2(0,T;H^1(\Omega))$ and $L^\infty(\Sigma)\hookrightarrow L^2(\Sigma)$ give the $\mathcal E$-norm assertion. Proposition~\ref{prop:approx-rate-restated} in Appendix~\ref{app:approximation} proves the algebraic rate under its additional extension assumptions.
\end{proof}

\subsection{Quantitative Stability of the Optimality System}
\label{sec:stability}

\begin{theorem}[Quantitative stability, soft constraints]
\label{thm:quant-stability}
Assume (H1)-(H3) and (H5). Let $M_\kappa$ be as in
Corollary~\ref{cor:boundedness} and set $M_a:=\|f'(y^*)\|_{L^\infty(Q)}$.

There are constants $C_P,C_1,C_2,C_3>0$, depending only on
$\nu,\Omega,T,c_f,M_a$, and a Schauder constant $C_{\rm Sch}>0$, depending
additionally on $\|f'(y^*)\|_{C^{\beta,\beta/2}(\overline Q)}$, such that the
following holds. Define
\begin{equation}
\label{eq:alpha0}
\alpha_0 \;:=\; 4\sqrt2\,\nu(1+M_\kappa)\sqrt{C_2}\,C_3.
\end{equation}
If $\alpha>\alpha_0$, then the linearization of \eqref{eq:state_opt}-\eqref{eq:terminal}
about $(y^*,\lambda^*)$ satisfies
\begin{equation}
\label{eq:stability-bound}
\|(e_y,e_\lambda,e_u)\|_{\mathcal E}
\;\le\;
C_{\rm stab}\Bigl(
\|e_{y,0}\|_{L^2(\Omega)}
+\|\mathcal R_y\|_{L^2(Q)}
+\|\mathcal R_\lambda\|_{L^2(Q)}
+\|\mathcal R_u\|_{L^2(\Sigma)}
+\|R^y_{\rm bc}\|_{L^2(\Sigma)}
\Bigr)
\;+\;
C_{\rm stab}'\,M_{\rm bc}.
\end{equation}
Here $C_{\rm stab}$ depends on $(\nu,\Omega,T,c_f,M_a,M_\kappa,\alpha)$, and
$C_{\rm stab}'$ depends additionally on $C_{\rm Sch}$; both are given
explicitly in the proof.
The condition $\alpha>\alpha_0$ is sufficient, rather than necessary, for
the stability estimate. It arises from absorbing the control-error feedback
term in Step~3 of the proof and is not required to formulate the original
optimal-control problem, for which only $\alpha>0$ is assumed.
\end{theorem}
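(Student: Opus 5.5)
We linearize the optimality system \eqref{eq:state_opt}--\eqref{eq:terminal} about $(y^*,\lambda^*)$ and subtract the network residual identities \eqref{eq:res_y}, \eqref{eq:res_lambda}, \eqref{eq:res_u}. With $a:=f'(y^*)$ and $\kappa=f''(y^*)\lambda^*$, the linearized errors satisfy
\[
\partial_te_y-\nu\Delta e_y+a e_y=-\mathcal R_y,\qquad e_y|_\Sigma=e_u-R^y_{\rm bc},\qquad e_y(\cdot,0)=e_{y,0},
\]
\[
-\partial_te_\lambda-\nu\Delta e_\lambda+a e_\lambda+(1+\kappa)e_y=-\mathcal R_\lambda,\qquad e_\lambda|_\Sigma=-R^\lambda_{\rm bc},\qquad e_\lambda(\cdot,T)=-R^\lambda_T,
\]
together with $\alpha e_u+\nu\partial_ne_\lambda=-\mathcal R_u$ on $\Sigma$. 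Here $(1+\kappa)e_y$ is bounded by $(1+M_\kappa)|e_y|$ by Corollary~\ref{cor:boundedness}; this is where the factor $(1+M_\kappa)$ in \eqref{eq:alpha0} enters. The plan is to estimate $e_y$ by transposition, then $e_\lambda$ by lifting plus energy and maximal-regularity estimates, and finally to close the loop through the stationarity relation.

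\emph{Step 1 (state).} Apply Corollary~\ref{cor:transposition} to the $e_y$ problem. Since $a\ge -c_f$, Lemma~\ref{lem:coercivity} with Gr\"onwall gives
\[
\|e_y\|_{L^2(Q)}\le \sqrt{C_2}\bigl(\|e_{y,0}\|_{L^2(\Omega)}+\|\mathcal R_y\|_{L^2(Q)}+\|e_u\|_{L^2(\Sigma)}+\|R^y_{\rm bc}\|_{L^2(\Sigma)}\bigr),
\]
with $C_2$ depending on $\nu,\Omega,T,c_f,M_a$. This is why $R^y_{\rm bc}$ only needs its $L^2(\Sigma)$ norm.

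\emph{Step 2 (adjoint).} Split $e_\lambda=w+z$, where $z$ is the Schauder lift of Lemma~\ref{lem:schauder}. That is, $z$ solves the homogeneous backward equation with boundary datum $-R^\lambda_{\rm bc}$ and terminal datum $-R^\lambda_T$; the corner compatibility required there is supplied by (H5). Then $\|z\|_{C^{2+\beta,1+\beta/2}(\overline Q)}\le C_{\rm Sch}M_{\rm bc}$, which controls $z$ in $L^2(0,T;H^1)$ and controls $\partial_n z$ in $L^2(\Sigma)$, both by $\lesssim C_{\rm Sch}M_{\rm bc}$. The remainder $w$ has homogeneous data and source $-\mathcal R_\lambda-(1+\kappa)e_y$. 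An energy estimate, with the negative reaction part handled by Gr\"onwall, gives the $L^2(0,T;H^1)$ bound. Parabolic $H^{2,1}$ maximal regularity combined with the trace theorem gives
\[
\|\partial_n w\|_{L^2(\Sigma)}\le C_3\bigl(\|\mathcal R_\lambda\|_{L^2(Q)}+(1+M_\kappa)\|e_y\|_{L^2(Q)}\bigr).
\]

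\emph{Step 3 (closing via stationarity; main obstacle).} From the stationarity relation,
\[
\alpha\|e_u\|_{L^2(\Sigma)}\le\|\mathcal R_u\|_{L^2(\Sigma)}+\nu\|\partial_n w\|_{L^2(\Sigma)}+\nu\|\partial_n z\|_{L^2(\Sigma)}.
\]
Inserting Steps~1--2 makes $\|e_u\|_{L^2(\Sigma)}$ reappear on the right-hand side with coefficient $\nu C_3(1+M_\kappa)\sqrt{C_2}$, up to the numerical factors generated by squaring and using $(a+b)^2\le2a^2+2b^2$. The hard part is tracking these constants: the feedback coefficient must be at most $\alpha/2$, which is exactly what the choice $\alpha_0=4\sqrt2\,\nu(1+M_\kappa)\sqrt{C_2}C_3$ in \eqref{eq:alpha0} is designed to guarantee. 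Absorbing this term yields
\[
\|e_u\|_{L^2(\Sigma)}\lesssim\alpha^{-1}(\text{residuals})+\alpha^{-1}\nu C_{\rm Sch}M_{\rm bc}.
\]
We then substitute this bound back into Step~1 to get $\|e_y\|_{L^2(Q)}$, and into Step~2 to get $\|e_\lambda\|_{L^2(0,T;H^1(\Omega))}$. Summing the three components in the $\mathcal E$-norm \eqref{eq:E-norm} gives \eqref{eq:stability-bound}. All constants multiplying the $L^2$ residuals form $C_{\rm stab}$, and those multiplying $M_{\rm bc}$ form $C'_{\rm stab}$, which is proportional to $C_{\rm Sch}$. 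We record both constants explicitly as they arise.
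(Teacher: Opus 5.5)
Your proposal is correct and follows the paper's proof essentially step for step: the same linearized error system, the state error controlled by transposition for the boundary datum $e_u-R^y_{\rm bc}$ plus an energy/Gr\"onwall bound for the forcing and initial data, the adjoint error split into a Schauder lift of $(-R^\lambda_{\rm bc},-R^\lambda_T)$ and a homogeneous-data part, and absorption of the $e_u$ feedback through the stationarity relation for $\alpha>\alpha_0$. The differences are minor. You bound $\partial_n w$ by $L^2(0,T;H^2(\Omega))$ maximal regularity plus the trace theorem, rather than by the Rellich--Pohozaev identity of Lemma~\ref{lem:hidden-reg}. Your unsquared bookkeeping needs only $\alpha>\nu(1+M_\kappa)\sqrt{C_2}\,C_3$; the factor $4\sqrt2$ comes from the paper's squared estimates. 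Finally, your roles of $C_2$ and $C_3$ are swapped relative to the paper, where $C_3=\nu\sqrt{C_2}$ is the transposition constant; this is harmless, since only their product enters $\alpha_0$.
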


\begin{proof}
Throughout the proof, $C_1$ and $C_2$ denote the constants in (R1) and (R2)
of Lemma~\ref{lem:hidden-reg} (to be stated in Appendix~\ref{app:hidden-reg}
below), $C_3$ denotes the transposition constant of
Corollary~\ref{cor:transposition}, $C_{\rm Sch}$ denotes the Schauder
constant of Lemma~\ref{lem:schauder}, and $C=C(\Omega,T)$ denotes the
constant of Lemma~\ref{lem:normal-derivative} (to be stated in
Appendix~\ref{app:trace} below).

With $a:=f'(y^*)$, $\kappa$ as in Corollary~\ref{cor:boundedness}, and
$R^y_{\rm bc},R^\lambda_{\rm bc},R^\lambda_T$ as in (H5), subtracting the
network equations from \eqref{eq:state_opt}-\eqref{eq:terminal} shows that
the errors $e_y,e_\lambda,e_u$ solve
\begin{align}
\partial_te_y-\nu\Delta e_y+ae_y&=-\mathcal R_y,\quad e_y=e_u-R^y_{\rm bc}\text{ on }\Sigma,\quad e_y(\cdot,0)=e_{y,0}, \label{eq:linearized-state}\\
-\partial_te_\lambda-\nu\Delta e_\lambda+ae_\lambda+(\kappa+1)e_y&=-\mathcal R_\lambda,\quad e_\lambda=-R^\lambda_{\rm bc}\text{ on }\Sigma,\quad e_\lambda(\cdot,T)=-R^\lambda_T, \label{eq:linearized-adjoint}\\
\alpha e_u+\nu\partial_ne_\lambda&=-\mathcal R_u\quad\text{on }\Sigma. \label{eq:linearized-stationarity}
\end{align}
(The trace data follow from $y^*=u^*$, $\lambda^*=0$ on $\Sigma$, and
$\lambda^*(\cdot,T)=0$; e.g.\ $e_y-e_u=(y^*-u^*)-(Y_\theta-U_\psi)=-R^y_{\rm bc}$
on $\Sigma$.) The strategy is to bound $e_y$ in terms of $e_u$ (Step~1),
bound $e_\lambda$ in terms of $e_y$ (Step~2), and then use the stationarity
condition \eqref{eq:linearized-stationarity} to close this loop into a
self-contained bound on $e_u$ alone (Step~3), from which the bounds on
$e_y$ and $e_\lambda$ follow. Throughout, $(a+b)^2\le2a^2+2b^2$ is used
without further comment.

\medskip
\textbf{Step 1 (state error in terms of $e_u$).}
Since \eqref{eq:linearized-state} is linear, we split $e_y=e_y^{(F)}+e_y^{(g)}$
into the contributions of the interior forcing and of the boundary datum
separately: $e_y^{(F)}$ solves \eqref{eq:linearized-state} with forcing
$-\mathcal R_y$ and initial data $e_{y,0}$, but \emph{homogeneous} boundary
data ($e_y^{(F)}=0$ on $\Sigma$); $e_y^{(g)}$ solves \eqref{eq:linearized-state}
with zero forcing, zero initial data, and boundary data $e_u-R^y_{\rm bc}$.

\emph{Bounding $e_y^{(F)}$.} Testing \eqref{eq:linearized-state} with
$e_y^{(F)}$ itself, and using Lemma~\ref{lem:coercivity} (equivalently,
$a\ge-c_f$ under (H3)) together with Young's inequality on
$(\mathcal R_y,e_y^{(F)})$,
\[
\tfrac12\frac{d}{dt}\|e_y^{(F)}\|_{L^2(\Omega)}^2+\nu\|\nabla e_y^{(F)}\|_{L^2(\Omega)}^2
\le c_f\|e_y^{(F)}\|_{L^2(\Omega)}^2+\tfrac12\|\mathcal R_y\|_{L^2(\Omega)}^2+\tfrac12\|e_y^{(F)}\|_{L^2(\Omega)}^2.
\]
Grönwall's inequality then gives
\[
\|e_y^{(F)}\|_{C([0,T];L^2(\Omega))}^2\le e^{(2c_f+1)T}\bigl(\|e_{y,0}\|_{L^2(\Omega)}^2+\|\mathcal R_y\|_{L^2(Q)}^2\bigr)
=:C_P\bigl(\|e_{y,0}\|_{L^2(\Omega)}^2+\|\mathcal R_y\|_{L^2(Q)}^2\bigr),
\]
which defines $C_P$ and, after integrating in time, yields
\begin{equation}
\label{eq:step1-F}
\|e_y^{(F)}\|_{L^2(Q)}^2\;\le\;TC_P\bigl(\|e_{y,0}\|_{L^2(\Omega)}^2+\|\mathcal R_y\|_{L^2(Q)}^2\bigr).
\end{equation}

\emph{Bounding $e_y^{(g)}$.} This is exactly the transposition setting of
Corollary~\ref{cor:transposition}, with boundary datum
$g=e_u-R^y_{\rm bc}\in L^2(\Sigma)$ and zero initial data, which gives
\begin{equation}
\label{eq:step1-g}
\|e_y^{(g)}\|_{L^2(Q)}^2\;\le\;2C_3^2\bigl(\|e_u\|_{L^2(\Sigma)}^2+\|R^y_{\rm bc}\|_{L^2(\Sigma)}^2\bigr).
\end{equation}

\emph{Combining.} Adding \eqref{eq:step1-F} and \eqref{eq:step1-g}, and
collecting every term that does not involve $e_u$ into
\[
A_1:=2TC_P\bigl(\|e_{y,0}\|_{L^2(\Omega)}^2+\|\mathcal R_y\|_{L^2(Q)}^2\bigr)+4C_3^2\|R^y_{\rm bc}\|_{L^2(\Sigma)}^2,
\]
we obtain the Step~1 bound
\begin{equation}
\label{eq:step1-final}
\|e_y\|_{L^2(Q)}^2\;\le\;A_1+4C_3^2\|e_u\|_{L^2(\Sigma)}^2.
\end{equation}
This is the promised bound on $e_y$ in terms of $e_u$ (and known residuals).

\medskip
\textbf{Step 2 (adjoint error in terms of $e_y$).}
As in Step~1, we split $e_\lambda=e_\lambda^{(bd)}+e_\lambda^{(F)}$ into the
contribution of the inhomogeneous boundary/terminal data and the
contribution of the interior forcing: $e_\lambda^{(bd)}$ solves
\eqref{eq:linearized-adjoint} with zero forcing but the true boundary and
terminal data $(-R^\lambda_{\rm bc},-R^\lambda_T)$, while $e_\lambda^{(F)}$
solves the same equation with homogeneous boundary/terminal data and the
actual forcing
\[
F_\lambda:=-(\kappa+1)e_y-\mathcal R_\lambda.
\]

\emph{Bounding $e_\lambda^{(bd)}$.} After the time reversal $\tau=T-t$
(which turns the backward problem \eqref{eq:linearized-adjoint} into a
forward problem of the type covered by Lemma~\ref{lem:schauder}), apply
that lemma with $h=0$, $g_1=-R^\lambda_{\rm bc}$, $v_0=-R^\lambda_T$, and use
(H5):
\[
\|e_\lambda^{(bd)}\|_{C^{2+\beta,1+\beta/2}(\overline Q)}
\le C_{\rm Sch}\bigl(\|R^\lambda_{\rm bc}\|_{C^{2+\beta,1+\beta/2}(\Sigma)}+\|R^\lambda_T\|_{C^{2+\beta}(\overline\Omega)}\bigr)
\le2C_{\rm Sch}M_{\rm bc}.
\]
This Hölder bound immediately gives control of both quantities we need:
by Lemma~\ref{lem:normal-derivative}, $\|\partial_ne_\lambda^{(bd)}\|_{L^2(\Sigma)}\le2CC_{\rm Sch}M_{\rm bc}$,
and directly from the Hölder norm, $\|e_\lambda^{(bd)}\|_{L^2(0,T;H^1(\Omega))}\le|Q|^{1/2}\cdot2C_{\rm Sch}M_{\rm bc}$.

\emph{Bounding $e_\lambda^{(F)}$.} Apply Lemma~\ref{lem:hidden-reg} (again
after time reversal) with $F=F_\lambda\in L^2(Q)$ and $v_0=0$. Since
$\|F_\lambda\|_{L^2(Q)}\le(1+M_\kappa)\|e_y\|_{L^2(Q)}+\|\mathcal R_\lambda\|_{L^2(Q)}$,
part (R2) of the lemma gives
\[
\|\partial_ne_\lambda^{(F)}\|_{L^2(\Sigma)}^2\le C_2\|F_\lambda\|_{L^2(Q)}^2
\le2C_2(1+M_\kappa)^2\|e_y\|_{L^2(Q)}^2+2C_2\|\mathcal R_\lambda\|_{L^2(Q)}^2,
\]
and part (R1) gives
$\|e_\lambda^{(F)}\|_{L^2(0,T;H^1(\Omega))}^2\le TC_1\bigl(2(1+M_\kappa)^2\|e_y\|_{L^2(Q)}^2+2\|\mathcal R_\lambda\|_{L^2(Q)}^2\bigr)$.

\emph{Combining.} Adding the $e_\lambda^{(bd)}$ and $e_\lambda^{(F)}$
contributions to the normal derivative gives the Step~2 bound
\begin{equation}
\label{eq:step2-final}
\|\partial_ne_\lambda\|_{L^2(\Sigma)}^2\;\le\;8C^2C_{\rm Sch}^2M_{\rm bc}^2+4C_2(1+M_\kappa)^2\|e_y\|_{L^2(Q)}^2+4C_2\|\mathcal R_\lambda\|_{L^2(Q)}^2,
\end{equation}
and the triangle inequality applied to the two $H^1(\Omega)$-bounds above
similarly controls $\|e_\lambda\|_{L^2(0,T;H^1(\Omega))}$; this second bound
is only needed at the end of the proof.

\medskip
\textbf{Step 3 (closing the loop).} We now use the stationarity condition
\eqref{eq:linearized-stationarity} to turn the two one-directional bounds
of Steps~1-2 into a self-contained bound on $e_u$. From
\eqref{eq:linearized-stationarity}, $e_u=-\alpha^{-1}(\mathcal R_u+\nu\partial_ne_\lambda)$
on $\Sigma$, so \eqref{eq:step2-final} gives
\begin{align*}
\|e_u\|_{L^2(\Sigma)}^2
&\le2\alpha^{-2}\|\mathcal R_u\|_{L^2(\Sigma)}^2
+2\alpha^{-2}\nu^2\|\partial_ne_\lambda\|_{L^2(\Sigma)}^2\\
&\le
\underbrace{
2\alpha^{-2}\|\mathcal R_u\|_{L^2(\Sigma)}^2
+16\alpha^{-2}\nu^2C^2C_{\rm Sch}^2M_{\rm bc}^2
+8\alpha^{-2}\nu^2C_2\|\mathcal R_\lambda\|_{L^2(Q)}^2
}_{=:A_2}
+8\alpha^{-2}\nu^2C_2(1+M_\kappa)^2\|e_y\|_{L^2(Q)}^2.
\end{align*}
Here $A_2$ collects every term on the right that involves only known
residuals and $M_{\rm bc}$, leaving a single term proportional to
$\|e_y\|_{L^2(Q)}^2$ still to be eliminated. Substituting the Step~1 bound
\eqref{eq:step1-final}, $\|e_y\|_{L^2(Q)}^2\le A_1+4C_3^2\|e_u\|_{L^2(\Sigma)}^2$,
feeds $\|e_u\|_{L^2(\Sigma)}^2$ back into its own bound:
\[
\|e_u\|_{L^2(\Sigma)}^2\;\le\;\bigl[A_2+8\alpha^{-2}\nu^2C_2(1+M_\kappa)^2A_1\bigr]\;+\;K(\alpha)\,\|e_u\|_{L^2(\Sigma)}^2,
\qquad K(\alpha):=\frac{32\nu^2C_2C_3^2(1+M_\kappa)^2}{\alpha^2}.
\]
The factor $K(\alpha)$ is exactly what must be less than $1$ for this
self-referential inequality to close into an actual bound; since
$\alpha_0^2=32\nu^2C_2C_3^2(1+M_\kappa)^2$, the hypothesis $\alpha>\alpha_0$
is precisely the condition $K(\alpha)<1$. We may therefore absorb the last
term into the left-hand side, obtaining
\begin{equation}
\label{eq:eu-final}
\|e_u\|_{L^2(\Sigma)}^2\;\le\;\frac{A_2+8\alpha^{-2}\nu^2C_2(1+M_\kappa)^2A_1}{1-K(\alpha)}.
\end{equation}

\medskip
\textbf{Conclusion.} With $e_u$ now bounded independently by
\eqref{eq:eu-final}, the earlier one-directional bounds close in turn:
substituting \eqref{eq:eu-final} into \eqref{eq:step1-final} bounds $e_y$,
and substituting it into \eqref{eq:step2-final} — together with the
$e_\lambda^{(bd)}$ estimate and the triangle inequality for the two adjoint
components — bounds $e_\lambda$. Every squared component of
$\|(e_y,e_\lambda,e_u)\|_{\mathcal E}$ is thus bounded by a finite, explicit
linear combination of the squared residual quantities in
\eqref{eq:stability-bound} together with $M_{\rm bc}^2$, with coefficients
depending only on $\nu,\Omega,T,c_f,M_a,M_\kappa,\alpha,C_{\rm Sch}$. Taking
square roots and using $\sqrt{p+q}\le\sqrt p+\sqrt q$ gives
\eqref{eq:stability-bound}, with $C_{\rm stab}$ and $C_{\rm stab}'$ read off
from those coefficients.
\end{proof}

\begin{remark}[On the $M_{\rm bc}$-dependence]
\label{rem:mbc-additive}
$M_{\rm bc}$ enters \eqref{eq:stability-bound} as a genuine additive term, not multiplying a quantity that shrinks with the trained loss. This is the fully rigorous consequence of (H5) as stated: (H5) bounds a Hölder norm of the boundary/terminal residuals uniformly, but does not assert that this bound shrinks during training, so no argument can force its contribution to zero without further hypotheses. A sharper, residual-size-sensitive bound of the form $M_{\rm bc}^{1/2}\bigl(\|R^\lambda_{\rm bc}\|_{L^2(\Sigma)}^{1/2}+\|R^\lambda_T\|_{L^2(\Omega)}^{1/2}\bigr)$ is achievable in place of the additive $M_{\rm bc}$ term, but requires a genuine Gagliardo-Nirenberg-type interpolation inequality between the corresponding Hölder spaces and $L^2(Q)$, $L^2(\Sigma)$, or $L^2(\Omega)$ (e.g.\ $\|w\|_{L^\infty(Q)}\le C\|w\|_{C^{2+\beta,\cdot}(\overline Q)}^\theta\|w\|_{L^2(Q)}^{1-\theta}$ for an explicit $\theta=\theta(d,\beta)\in(0,1)$); this refinement is not carried out here.
\end{remark}

\begin{remark}
Two distinct sources of degradation appear on moving from hard to soft constraints. (i) The state-boundary term $\|R^y_{\rm bc}\|_{L^2(\Sigma)}$ costs a norm downgrade for $e_y$ itself (Remark~\ref{rem:E-norm-fix}: $L^2(Q)$ rather than $L^2(0,T;H^1(\Omega))$), but its \emph{coefficient} in \eqref{eq:stability-bound} is the same order as the other residuals. (ii) The adjoint-boundary/terminal terms cost no norm downgrade for $e_\lambda$ (which retains full $H^1(\Omega)$ spatial regularity, since (H5) supplies strong Hölder control), but contribute the genuinely non-vanishing additive term $M_{\rm bc}$ of Remark~\ref{rem:mbc-additive}. This is a more precise, and different, characterization of the hard-vs-soft asymmetry than a purely qualitative statement would suggest.
\end{remark}

\subsection{Decomposition of the Total Error}
\label{sec:error-decomposition}

Let $\mathcal H_N$ denote the product class of network triples
$z=(Y,\Lambda,U)$ at resolution $N$. Suppose that the training algorithm
returns a triple
$\widehat z_N=(Y_\theta,\Lambda_\phi,U_\psi)\in\mathcal H_N$ on a fixed
collocation set such that, for some optimization tolerance
$\epsilon_{\rm opt}\ge0$,
\begin{equation}
\label{eq:optimization-error}
\mathcal L_{\rm ind}(\widehat z_N)
\le \inf_{z\in\mathcal H_N}\mathcal L_{\rm ind}(z)+\epsilon_{\rm opt}^2.
\end{equation}
The square in \eqref{eq:optimization-error} is natural because
$\mathcal L_{\rm ind}$ is a sum of squared residuals; after taking a square
root, the optimization contribution is therefore $\epsilon_{\rm opt}$.
Recall that $\epsilon_{\rm net}(N)$ is the strong Sobolev-approximation bound
introduced in (H4), whereas $\epsilon_{\rm opt}$ measures the loss
suboptimality of the trained network within the fixed class $\mathcal H_N$.

All seven weights in $\mathcal L_{\rm ind}$ are assumed fixed and strictly
positive. Proposition~\ref{prop:approx-rate} supplies a comparison triple
$z_N^{\rm cmp}\in\mathcal H_N$ such that, on every collocation set,
\[
\mathcal L_{\rm ind}(z_N^{\rm cmp})
\le C_{\rm cmp}\epsilon_{\rm net}(N)^2,
\]
where $C_{\rm cmp}>0$ depends on the fixed loss weights and the problem data,
but not on $N$ or on the collocation points. Hence
\[
\mathcal L_{\rm ind}(\widehat z_N)
\le C_{\rm cmp}\epsilon_{\rm net}(N)^2+\epsilon_{\rm opt}^2.
\]
In the following sum, $\bullet$ ranges over the seven residual losses in
\eqref{eq:indirect_loss}. The strict positivity of their fixed weights now
gives
\begin{equation}
\label{eq:empirical-residual-bound}
\left(\sum_{\bullet}\mathcal L_\bullet(\widehat z_N)\right)^{1/2}
\le C_w\bigl(\epsilon_{\rm net}(N)+\epsilon_{\rm opt}\bigr),
\end{equation}
where $C_w>0$ depends on the fixed loss weights and the problem data, but not on $N$ or on the collocation points. Section~\ref{sec:quadrature} converts \eqref{eq:empirical-residual-bound} into a bound for the continuous residuals entering Theorem~\ref{thm:nonlinear-stability}. The resulting Corollary~\ref{cor:apriori} makes the four contributions precise: network approximation, optimization, quadrature, and the strong soft boundary/terminal-constraint contribution. The weaker state-boundary residual $\|R^y_{\rm bc}\|_{L^2(\Sigma)}$ is already one of the continuous residuals controlled through \eqref{eq:empirical-residual-bound} and quadrature.

\subsection{Local Nonlinear Error Estimate}
\label{sec:apriori}

For a trained triple, set $a:=f'(y^*)$ and define the exact Taylor remainders
\begin{align}
\mathcal N_y(e_y)&:=f(y^*-e_y)-f(y^*)+f'(y^*)e_y,\label{eq:nonlinear-rem-state}\\
\mathcal N_\lambda(e_y,e_\lambda)&:=\lambda^*\bigl[f'(y^*-e_y)-f'(y^*)+f''(y^*)e_y\bigr]
-\bigl[f'(y^*-e_y)-f'(y^*)\bigr]e_\lambda.\label{eq:nonlinear-rem-adjoint}
\end{align}
Subtracting the network residual equations from the reference KKT system gives
\begin{align}
\partial_te_y-\nu\Delta e_y+ae_y&=-\mathcal R_y+\mathcal N_y(e_y),\label{eq:nonlinear-error-state}\\
-\partial_te_\lambda-\nu\Delta e_\lambda+ae_\lambda+(\kappa+1)e_y&=-\mathcal R_\lambda+\mathcal N_\lambda(e_y,e_\lambda),\label{eq:nonlinear-error-adjoint}\\
\alpha e_u+\nu\partial_ne_\lambda&=-\mathcal R_u.\label{eq:nonlinear-error-control}
\end{align}
These exact nonlinear equations carry the same initial, boundary, and terminal error data as \eqref{eq:linearized-state}-\eqref{eq:linearized-stationarity}.
Define the aggregate continuous residual
\[
\mathfrak R:=\|e_{y,0}\|_{L^2(\Omega)}+\|\mathcal R_y\|_{L^2(Q)}+\|\mathcal R_\lambda\|_{L^2(Q)}
+\|\mathcal R_u\|_{L^2(\Sigma)}+\|R^y_{\rm bc}\|_{L^2(\Sigma)}.
\]

\begin{theorem}[Local nonlinear residual-to-error estimate]
\label{thm:nonlinear-stability}
Let (H1)-(H6) hold and let $\alpha>\alpha_0$. There are constants
$C_{\rm nl}>0$ and
\[
\rho_0:=\frac{1}{2C_{\rm stab}C_{\rm nl}}>0,
\]
depending only on the reference KKT point, the local Lipschitz bound for
$f''$, and the constants of Theorem~\ref{thm:quant-stability}, such that
the following holds. Whenever the radius $\rho$ in (H6) satisfies
$\rho\le\rho_0$, the trained triple satisfies
\begin{equation}
\label{eq:nonlinear-stability-bound}
\|(e_y,e_\lambda,e_u)\|_{\mathcal E}\;\le\;2C_{\rm stab}\mathfrak R+2C'_{\rm stab}M_{\rm bc}.
\end{equation}
\end{theorem}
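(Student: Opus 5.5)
The plan is to treat the nonlinear error system \eqref{eq:nonlinear-error-state}-\eqref{eq:nonlinear-error-control} as the linearized system \eqref{eq:linearized-state}-\eqref{eq:linearized-stationarity} with modified interior residuals. Concretely, $\mathcal R_y$ is replaced by $\widetilde{\mathcal R}_y:=\mathcal R_y-\mathcal N_y(e_y)$ and $\mathcal R_\lambda$ by $\widetilde{\mathcal R}_\lambda:=\mathcal R_\lambda-\mathcal N_\lambda(e_y,e_\lambda)$. The initial, boundary, terminal and stationarity data are unchanged, so the proof of Theorem~\ref{thm:quant-stability} applies verbatim; it only uses linearity and $L^2(Q)$ bounds on the forcings. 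This gives
\[
\|(e_y,e_\lambda,e_u)\|_{\mathcal E}\le C_{\rm stab}\bigl(\mathfrak R+\|\mathcal N_y(e_y)\|_{L^2(Q)}+\|\mathcal N_\lambda(e_y,e_\lambda)\|_{L^2(Q)}\bigr)+C'_{\rm stab}M_{\rm bc}.
\]
The whole task then reduces to bounding the two Taylor remainders by $C_{\rm nl}\,\rho\,\|(e_y,e_\lambda,e_u)\|_{\mathcal E}$. Once that is done, the choice $\rho\le\rho_0=1/(2C_{\rm stab}C_{\rm nl})$ lets us absorb $\tfrac12\|(e_y,e_\lambda,e_u)\|_{\mathcal E}$ into the left-hand side, and multiplying by $2$ yields \eqref{eq:nonlinear-stability-bound}.

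For the remainders, work on the compact interval $I$ from (H6), which contains $y^*$ and $Y_\theta=y^*-e_y$. Write $L_2:=\sup_I|f''|$ and let $L_3$ be the Lipschitz constant of $f''$ on $I$.

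For $\mathcal N_y$, the integral form of Taylor's theorem gives $|\mathcal N_y(e_y)|\le \tfrac12 L_2|e_y|^2\le \tfrac12L_2\rho|e_y|$ pointwise, since $\|e_y\|_{L^\infty(Q)}\le\rho$. Hence $\|\mathcal N_y\|_{L^2(Q)}\le\tfrac12L_2\rho\|e_y\|_{L^2(Q)}$.

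For $\mathcal N_\lambda$, the first bracket is a second-order remainder of $f'$. By the Lipschitz continuity of $f''$ it is bounded by $\tfrac12L_3|e_y|^2$, and $\lambda^*$ is bounded by Proposition~\ref{prop:regularity}. This term is therefore at most $\tfrac12L_3\|\lambda^*\|_{L^\infty}\rho|e_y|$ pointwise. The second term satisfies $|f'(y^*-e_y)-f'(y^*)||e_\lambda|\le L_2|e_y||e_\lambda|$. Here I would use $\|e_y\|_{L^\infty}\le\rho$ and keep $e_\lambda$ in $L^2(Q)$, which gives $L_2\rho\|e_\lambda\|_{L^2(Q)}\le L_2\rho\|e_\lambda\|_{L^2(0,T;H^1(\Omega))}$. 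Collecting terms gives $C_{\rm nl}:=\tfrac12L_2+\tfrac12L_3\|\lambda^*\|_{L^\infty(Q)}+L_2$, up to the constant from $\sqrt{p+q}$. This constant depends only on the reference point and on the local Lipschitz bound for $f''$, as claimed.

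The main obstacle is the legitimacy of the first step: rerunning Theorem~\ref{thm:quant-stability} with forcings that themselves depend on the unknown errors. I would check that each step of that proof uses only $\widetilde{\mathcal R}_y,\widetilde{\mathcal R}_\lambda\in L^2(Q)$. In particular, Step~1's energy estimate, Lemma~\ref{lem:hidden-reg} and Corollary~\ref{cor:transposition} need only $L^2$ forcing, and $\mathcal N_y,\mathcal N_\lambda$ are bounded functions because all networks and the reference solution are bounded. The absorption is also not circular: all quantities are a priori finite for a fixed smooth trained triple, so moving $\tfrac12\|\cdot\|_{\mathcal E}$ across the inequality is valid. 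A secondary point is that the $L^\infty$ smallness in (H6) is essential, because the quadratic terms cannot be controlled by $L^2$ norms alone in arbitrary dimension $d$. This is exactly why the estimate is only local.
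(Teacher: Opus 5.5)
Your proposal is correct and follows the paper's proof essentially step for step. You treat the Taylor remainders $\mathcal N_y,\mathcal N_\lambda$ as extra $L^2(Q)$ forcing in Theorem~\ref{thm:quant-stability}, bound them pointwise using $\sup_I|f''|$, the Lipschitz constant of $f''$ on $I$, $\|\lambda^*\|_{L^\infty(Q)}$ and $\|e_y\|_{L^\infty(Q)}\le\rho$ from (H6), and absorb $C_{\rm stab}C_{\rm nl}\rho\|(e_y,e_\lambda,e_u)\|_{\mathcal E}$ once $\rho\le\rho_0$; your remark that this absorption is legitimate because the error norm is a priori finite for a fixed smooth trained triple makes explicit a point the paper leaves implicit.
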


\begin{proof}
The exact nonlinear error equations
\eqref{eq:nonlinear-error-state}-\eqref{eq:nonlinear-error-control} differ
from the linearized system \eqref{eq:linearized-state}-\eqref{eq:linearized-stationarity}
only by the addition of the Taylor remainders $\mathcal N_y$ and
$\mathcal N_\lambda$ to the forcing terms. The proof proceeds in three
steps: bound these remainders by $\rho$ times the error norm
$\|e\|_{\mathcal E}$ (Step~1); apply the linear stability estimate of
Theorem~\ref{thm:quant-stability} to the nonlinear system, treating the
remainders as extra forcing (Step~2); and absorb the resulting
self-referential term when $\rho$ is small enough (Step~3).

\medskip
\textbf{Step 1 (bounding the Taylor remainders).}
Let $I$ be the compact interval specified in (H6), let $L_{f''}$ be the
Lipschitz constant of $f''$ on $I$, and set $M_{f''}:=\|f''\|_{L^\infty(I)}$.

\emph{The state remainder.} By Taylor's theorem with Lagrange remainder,
$\mathcal N_y(e_y)$ is bounded pointwise by $\tfrac12M_{f''}|e_y|^2$; using
the local-neighborhood bound $|e_y|\le\rho$ from (H6) to replace one factor
of $|e_y|$ by $\rho$,
\[
\|\mathcal N_y(e_y)\|_{L^2(Q)}\le\tfrac12M_{f''}\rho\|e_y\|_{L^2(Q)}.
\]

\emph{The adjoint remainder.} The two brackets in
\eqref{eq:nonlinear-rem-adjoint} are controlled separately: the first,
$f'(y^*-e_y)-f'(y^*)+f''(y^*)e_y$, is itself a first-order Taylor remainder
of $f'$ (whose derivative is $f''$), hence bounded pointwise by
$\tfrac12L_{f''}|e_y|^2$; the second,
$f'(y^*-e_y)-f'(y^*)$, is bounded by $M_{f''}|e_y|$ since $f''$ is bounded
by $M_{f''}$ on $I$. Combining these with the factor $\lambda^*$ and the
same replacement $|e_y|\le\rho$,
\[
\|\mathcal N_\lambda(e_y,e_\lambda)\|_{L^2(Q)}
\le\left(\tfrac12L_{f''}\|\lambda^*\|_{L^\infty(Q)}+M_{f''}\right)
\rho\bigl(\|e_y\|_{L^2(Q)}+\|e_\lambda\|_{L^2(Q)}\bigr).
\]

\emph{Combining.} Since $\|e_y\|_{L^2(Q)}$ and $\|e_\lambda\|_{L^2(Q)}\le\|e_\lambda\|_{L^2(0,T;H^1(\Omega))}$
are each bounded by $\|(e_y,e_\lambda,e_u)\|_{\mathcal E}$, the two estimates
above combine into a single bound on both remainders together:
\[
\|\mathcal N_y\|_{L^2(Q)}+\|\mathcal N_\lambda\|_{L^2(Q)}
\le C_{\rm nl}\rho\|(e_y,e_\lambda,e_u)\|_{\mathcal E},
\]
which defines $C_{\rm nl}$ in terms of $M_{f''}$, $L_{f''}$, and
$\|\lambda^*\|_{L^\infty(Q)}$.

\medskip
\textbf{Step 2 (applying the linear stability estimate).}
Equations \eqref{eq:nonlinear-error-state}-\eqref{eq:nonlinear-error-control}
have exactly the form treated in Theorem~\ref{thm:quant-stability}, except
that the forcing terms $-\mathcal R_y$ and $-\mathcal R_\lambda$ are replaced
by $-\mathcal R_y+\mathcal N_y$ and $-\mathcal R_\lambda+\mathcal N_\lambda$;
since that theorem holds for arbitrary $L^2$ forcing, it applies here with
the triangle inequality absorbing the extra terms into $\mathfrak R$.
Writing $e:=(e_y,e_\lambda,e_u)$ and using the Step~1 bound, this gives
\[
\|e\|_{\mathcal E}\;\le\;C_{\rm stab}\mathfrak R+C'_{\rm stab}M_{\rm bc}
+C_{\rm stab}C_{\rm nl}\rho\|e\|_{\mathcal E}.
\]

\medskip
\textbf{Step 3 (absorption).}
The inequality above bounds $\|e\|_{\mathcal E}$ partly in terms of itself;
this is harmless provided the self-referential coefficient
$C_{\rm stab}C_{\rm nl}\rho$ is small. For $\rho\le\rho_0=\dfrac{1}{2C_{\rm stab}C_{\rm nl}}$,
this coefficient is at most $\tfrac12$, so the last term is at most
$\tfrac12\|e\|_{\mathcal E}$ and can be absorbed into the left-hand side,
giving
\[
\tfrac12\|e\|_{\mathcal E}\le C_{\rm stab}\mathfrak R+C'_{\rm stab}M_{\rm bc},
\]
which is \eqref{eq:nonlinear-stability-bound} after multiplying by $2$.
\end{proof}

\subsection{Quadrature Error}
\label{sec:quadrature}

This subsection quantifies the gap between the Monte-Carlo loss actually minimized and the continuous residual norms entering Theorem~\ref{thm:nonlinear-stability}. We adapt the space-filling argument of \cite{shin2020convergence} to make it quantitative.

\begin{assumption}[Space-filling collocation, following \cite{shin2020convergence} Assumption 3.1]
\label{assump:spacefilling}
The interior points $\{(x_j,t_j)\}\subset Q$, initial points
$\{x_i^0\}\subset\Omega$, and terminal points $\{x_i^T\}\subset\Omega$ used
in \eqref{eq:loss_y_common}-\eqref{eq:loss_ic_y_common} and
\eqref{eq:adjoint_residual_loss}-\eqref{eq:terminal_lambda} are independent
and identically distributed (i.i.d.) draws from probability distributions
with density bounded above and below on their respective sampling domains.
Boundary points are sampled independently on each connected component of
$\Sigma$, with componentwise densities bounded above and below; the resulting
estimates are combined over the finitely many components. Residuals may share
a collocation batch, as in Section~\ref{sec:method}; no independence between
the corresponding error events is assumed.
\end{assumption}

\begin{proposition}[Quadrature error bound]
\label{prop:quadrature}
Fix a trained network. Under Assumption~\ref{assump:spacefilling} and (H5),
all seven residuals are Hölder continuous on their compact sampling domains.
Let $N_{\rm int},N_{\rm bc},N_{\rm ic},N_{\rm T}$ be the interior, boundary,
initial, and terminal sample sizes, respectively, and let
$\beta_\bullet\in(0,1]$ be the Hölder exponent of
\[
\bullet\in\{\mathcal R_y,\mathcal R_\lambda,\mathcal R_u,
R^y_{\rm ic},R^y_{\rm bc},R^\lambda_{\rm bc},R^\lambda_T\}.
\]
For each residual, let
$n_\bullet\in\{N_{\rm int},N_{\rm bc},N_{\rm ic},N_{\rm T}\}$ be its
sample size and let $D_\bullet\in\{Q,\Sigma,\Omega\}$ be its sampling domain,
with $d_\bullet:=\dim D_\bullet\in\{d+1,d\}$. Then
\cite[Lem.~B.1-B.2, Thm.~3.1]{shin2020convergence} gives, with probability at least
$1-\sqrt{n_\bullet}(1-1/\sqrt{n_\bullet})^{n_\bullet}$,
\[
\|\bullet\|_{L^2_{\rm av}(D_\bullet)}^2\;\le\;C_{n_\bullet}\cdot\mathcal L_\bullet\;+\;C'\,n_\bullet^{-\beta_\bullet/d_\bullet},
\]
where $C_{n_\bullet}=O(\sqrt{n_\bullet})$ and $C'$ depends on the Hölder seminorm of that fixed-network residual and on the density bounds. Set $\beta_{\rm int}:=\min(\beta_{\mathcal R_y},\beta_{\mathcal R_\lambda})$, $\beta_{\rm bc}:=\min(\beta_{\mathcal R_u},\beta_{R^y_{\rm bc}},\beta_{R^\lambda_{\rm bc}})$, and let $\beta_{\rm ic},\beta_{\rm T}$ be the exponents of the initial and terminal residuals. Denote by $\mathcal E_{\rm quad}$ the square root of the sum of the seven additive covering-error terms. Applying the estimate to all seven residuals and using a union bound, which does not require independence of the events, gives with probability at least $1-\sum_\bullet\sqrt{n_\bullet}(1-1/\sqrt{n_\bullet})^{n_\bullet}$,
\begin{equation}
\label{eq:quad-bound}
\mathcal E_{\rm quad}\;=\;O\bigl(N_{\rm int}^{-\beta_{\rm int}/(2d+2)}+N_{\rm bc}^{-\beta_{\rm bc}/(2d)}+N_{\rm ic}^{-\beta_{\rm ic}/(2d)}+N_{\rm T}^{-\beta_{\rm T}/(2d)}\bigr),
\end{equation}
\end{proposition}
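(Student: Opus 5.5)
I will first establish the Hölder regularity claim and then apply the cited covering argument residual by residual, closing with a union bound and a square-root step. For a fixed trained triple $(Y_\theta,\Lambda_\phi,U_\psi)$, all three networks are $\tanh$ networks, hence $C^\infty$ on the compact set $\overline Q$. Consequently $\mathcal R_y$ and $\mathcal R_\lambda$ are Lipschitz on $\overline Q$; this uses that $f\in C^2$ and $f'$ is locally Lipschitz by (H3), composed with the bounded range of $Y_\theta$. Likewise $\mathcal R_u$ and the four trace residuals are Lipschitz on $\Sigma$, $\Omega\times\{0\}$ and $\Omega\times\{T\}$. For the trace residuals I will cite (H5) instead, since it gives uniform $C^{2+\beta,1+\beta/2}$ and $C^{2+\beta}$ bounds; these dominate a $C^{\beta_\bullet}$ seminorm with $\beta_\bullet\in(0,1]$. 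Because $\mathcal R_u$ involves $\partial_n\Lambda_\phi$, I will take its regularity from smoothness of the fixed network rather than from (H5). This yields finite Hölder seminorms $[\bullet]_{\beta_\bullet}$ for all seven residuals, which is all the covering argument needs.

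Next I treat each residual separately. Under Assumption~\ref{assump:spacefilling}, the points are i.i.d.\ with density bounded above and below on $D_\bullet$, a $d_\bullet$-dimensional compact set with regular boundary. For boundary points I work componentwise and sum over the finitely many components. In this setting \cite[Lem.~B.1--B.2]{shin2020convergence} bounds the probability that the sample fails to form a $\sqrt{n_\bullet}$-type covering of $D_\bullet$ at mesh scale $h\sim n_\bullet^{-1/d_\bullet}$, and that failure probability is exactly $\sqrt{n_\bullet}(1-1/\sqrt{n_\bullet})^{n_\bullet}$. On the complementary event, I write $|\bullet(z)|^2\le 2|\bullet(z_j)|^2+2[\bullet]_{\beta_\bullet}^2 h^{2\beta_\bullet}$ for $z$ in the cell of $z_j$ and integrate, as in \cite[Thm.~3.1]{shin2020convergence}. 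This gives $\|\bullet\|^2_{L^2_{\rm av}}\le C_{n_\bullet}\mathcal L_\bullet+C' n_\bullet^{-\beta_\bullet/d_\bullet}$. The $O(\sqrt{n_\bullet})$ constant comes from the cell-to-sample measure ratio, which is controlled by the density bounds. I expect this step to be the main obstacle: the exponent $\beta_\bullet/d_\bullet$ arises from how the squared Hölder seminorm and the cell diameter combine, so I will check it against the cited lemma rather than rederive the constants. The normalization of the averaged norms in \eqref{eq:averaged_residual_norms} matches the empirical means, so no extra measure factors appear.

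Finally, the seven failure events may be dependent because collocation batches are shared, so I combine them with a union bound; this gives the stated probability without assuming independence. Summing the seven additive covering terms and taking square roots with $\sqrt{\sum p_i}\le\sum\sqrt{p_i}$ gives
\[
\mathcal E_{\rm quad}\lesssim \sum_\bullet n_\bullet^{-\beta_\bullet/(2d_\bullet)}.
\]
Grouping the residuals by sample size, using $d_\bullet=d+1$ for $Q$ and $d$ for $\Sigma,\Omega$, and bounding each group by its smallest exponent (e.g.\ $n^{-\beta_{\mathcal R_y}/(2d+2)}\le n^{-\beta_{\rm int}/(2d+2)}$ since $n\ge1$) yields \eqref{eq:quad-bound}.
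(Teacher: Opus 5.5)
Your route is the paper's own: apply the single-residual estimate of \cite[Lem.~B.1--B.2, Thm.~3.1]{shin2020convergence} on $Q$, on each boundary component, and on the initial and terminal slices. You then combine the possibly dependent failure events by a union bound, sum the seven covering terms, take square roots, and group by sample size using the smallest exponent in each group. That skeleton is correct. Two local statements in your sketch are wrong, however, and should be fixed.

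\textbf{Regularity of the residuals.} $\mathcal R_y$ and $\mathcal R_\lambda$ are not Lipschitz in general. Besides the network terms, they contain the data $g$ and $y_d$, which (H2) places only in $C^{\beta,\beta/2}(\overline Q)$. On the bounded set $\overline Q$ these residuals are H\"older with isotropic exponent $\beta/2$. This is enough for the proposition, but $\beta_{\rm int}$ is then tied to the data regularity and cannot be taken equal to $1$. Similarly, (H5) covers only $R^y_{\rm bc}$, $R^\lambda_{\rm bc}$ and $R^\lambda_T$. $R^y_{\rm ic}$ is Lipschitz because $y_0\in C^{2+\beta}(\overline\Omega)$ by (H2). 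So the interior and initial H\"older continuity comes from (H2) and network smoothness, not from (H5).

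\textbf{Covering scale.} The failure probability $\sqrt{n}(1-1/\sqrt{n})^{n}$ comes from a union bound over $\sqrt n$ cells, each of sampling mass at least $c\,n^{-1/2}$. By the density bounds, these cells have diameter $h\asymp n^{-1/(2d_\bullet)}$, not $n^{-1/d_\bullet}$. With this $h$, your pointwise inequality $|\bullet(z)|^2\le 2|\bullet(z_j)|^2+2[\bullet]_{\beta_\bullet}^2h^{2\beta_\bullet}$ gives the additive term $n^{-\beta_\bullet/d_\bullet}$. The ratio of cell mass $n^{-1/2}$ to sample weight $n^{-1}$ then gives $C_{n}=O(\sqrt n)$.

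With your scale $h\asymp n^{-1/d_\bullet}$ you would need about $n$ cells. The union bound $n(1-1/n)^n\approx n/e$ then yields no useful probability, the measure ratio would be $O(1)$, and the covering term would come out as $n^{-2\beta_\bullet/d_\bullet}$. So your sketch is internally inconsistent as written. The bound you finally quote, by deferring to the cited lemma, is nevertheless the correct one.
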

\begin{proof}
Apply the cited single-residual estimate on $Q$, on each boundary component, and on the two copies of $\Omega$ at $t=0$ and $t=T$. Shared interior or boundary batches create dependent events, but Boole's union bound remains valid. Sum the seven inequalities and take square roots to obtain \eqref{eq:quad-bound}. The factors $C_{n_\bullet}$ multiplying empirical losses remain explicit and are not part of the decaying additive term $\mathcal E_{\rm quad}$.
\end{proof}

\begin{remark}
For a fixed $\tanh$ network the required Hölder seminorms are finite, but the constants in Proposition~\ref{prop:quadrature} may depend on that network. A bound uniform along a sequence of growing networks requires uniform Hölder control of all seven residuals, which is additional to (H5). Unlike Theorem~\ref{thm:quant-stability}, where (H5) enters a Schauder estimate for the adjoint PDE, its role here is to control quadrature of the residual traces themselves.
\end{remark}

\begin{corollary}[Conditional a priori neural-network estimate]
\label{cor:apriori}
Let the hypotheses and local-neighborhood condition of Theorem~\ref{thm:nonlinear-stability} hold, and suppose the trained network satisfies \eqref{eq:optimization-error}. Under Assumption~\ref{assump:spacefilling}, set $C_{\rm train}:=\max_\bullet C_{n_\bullet}^{1/2}$, with the sample-dependent constants of Proposition~\ref{prop:quadrature}. Then, with the probability stated in that proposition,
\begin{equation}
\label{eq:apriori_estimate}
\|(e_y,e_\lambda,e_u)\|_{\mathcal E}
\lesssim C_{\rm stab}\Bigl[C_{\rm train}\bigl(\epsilon_{\rm net}(N)+\epsilon_{\rm opt}\bigr)+\mathcal E_{\rm quad}\Bigr]
+C'_{\rm stab}M_{\rm bc}.
\end{equation}
Thus the approximation, optimization, and quadrature contributions decay only under the corresponding approximation, training, and uniform-quadrature hypotheses; the strong soft boundary/terminal-constraint contribution additionally requires $M_{\rm bc}\to0$ if convergence to the reference KKT point is sought.
\end{corollary}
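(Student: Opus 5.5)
The plan is to chain three earlier results: Theorem~\ref{thm:nonlinear-stability} turns the error into the continuous residual aggregate $\mathfrak R$, Proposition~\ref{prop:quadrature} turns continuous residuals into empirical losses plus covering errors, and \eqref{eq:empirical-residual-bound} turns empirical losses into $\epsilon_{\rm net}(N)+\epsilon_{\rm opt}$. Since the hypotheses of Theorem~\ref{thm:nonlinear-stability} are assumed, including (H6) with $\rho\le\rho_0$ and $\alpha>\alpha_0$, \eqref{eq:nonlinear-stability-bound} gives
\[
\|(e_y,e_\lambda,e_u)\|_{\mathcal E}\le 2C_{\rm stab}\mathfrak R+2C'_{\rm stab}M_{\rm bc}.
\]
It therefore suffices to bound each of the five terms of $\mathfrak R$. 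These are $\|e_{y,0}\|_{L^2(\Omega)}=\|R^y_{\rm ic}\|_{L^2(\Omega)}$, $\|\mathcal R_y\|_{L^2(Q)}$, $\|\mathcal R_\lambda\|_{L^2(Q)}$, $\|\mathcal R_u\|_{L^2(\Sigma)}$ and $\|R^y_{\rm bc}\|_{L^2(\Sigma)}$.

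\textbf{Step 1 (quadrature).} First I convert to averaged norms, using $\|v\|_{L^2(D)}=|D|^{1/2}\|v\|_{L^2_{\rm av}(D)}$. The factors $|D|^{1/2}$ are fixed domain constants and are absorbed into $\lesssim$. On the event of probability at least $1-\sum_\bullet\sqrt{n_\bullet}(1-1/\sqrt{n_\bullet})^{n_\bullet}$ from Proposition~\ref{prop:quadrature}, each residual satisfies
\[
\|\bullet\|_{L^2_{\rm av}}^2\le C_{n_\bullet}\mathcal L_\bullet+C'n_\bullet^{-\beta_\bullet/d_\bullet}.
\]
I will sum these bounds over the five residuals appearing in $\mathfrak R$; including the remaining two only enlarges the right-hand side. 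I then use $\sum_\bullet\|\bullet\|\le\sqrt5\,(\sum_\bullet\|\bullet\|^2)^{1/2}$ together with $\sqrt{p+q}\le\sqrt p+\sqrt q$. This gives
\[
\mathfrak R\lesssim C_{\rm train}\Bigl(\sum_\bullet\mathcal L_\bullet(\widehat z_N)\Bigr)^{1/2}+\mathcal E_{\rm quad},
\]
where $C_{\rm train}=\max_\bullet C_{n_\bullet}^{1/2}$ and $\mathcal E_{\rm quad}$ is exactly the square root of the summed covering terms, as in the proposition.

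\textbf{Step 2 (training and approximation).} Next I insert \eqref{eq:empirical-residual-bound}. It holds on every collocation set, because the comparison triple from Proposition~\ref{prop:approx-rate} has uniform $L^\infty$ residual bounds, and the trained triple satisfies \eqref{eq:optimization-error}. This yields $(\sum_\bullet\mathcal L_\bullet)^{1/2}\le C_w(\epsilon_{\rm net}(N)+\epsilon_{\rm opt})$. Substituting into the bound of Theorem~\ref{thm:nonlinear-stability}, and absorbing $2$, $C_w$, $\sqrt5$ and the domain factors into $\lesssim$, gives \eqref{eq:apriori_estimate}. The closing sentence of the corollary is then just a reading of which terms vanish under which hypotheses.

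\textbf{Main obstacle.} The main difficulty is bookkeeping that the probabilistic and deterministic pieces are compatible. Proposition~\ref{prop:quadrature} is stated for a fixed network, while $\widehat z_N$ is trained on the same samples. The estimate must therefore be read conditionally on the trained network: it uses the deterministic covering argument of \cite{shin2020convergence}, and the event and constants are those "stated in that proposition". I would state this interpretation explicitly rather than claim uniformity over $\mathcal H_N$.

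I must also make sure that $\lesssim$ hides only constants independent of $N$, $\epsilon_{\rm opt}$ and the sample sizes. The sample-size dependence must stay visible through $C_{\rm train}$ and $\mathcal E_{\rm quad}$. The constants $C_w$ and the domain measures are independent of these quantities, so they can be absorbed.
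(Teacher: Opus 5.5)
Your proposal is correct and follows the paper's own proof. The paper likewise combines \eqref{eq:empirical-residual-bound} with Proposition~\ref{prop:quadrature} term by term and converts averaged to unaveraged norms, which gives $\mathfrak R\lesssim C_{\rm train}(\epsilon_{\rm net}(N)+\epsilon_{\rm opt})+\mathcal E_{\rm quad}$, and then concludes with Theorem~\ref{thm:nonlinear-stability}; your remark on the sample-dependence of the trained network is a sound clarification, since the covering event behind the cited quadrature bound depends only on the collocation points, but it does not change the argument.
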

\begin{proof}
Equation~\eqref{eq:empirical-residual-bound} controls the empirical norms of all seven residuals. Apply Proposition~\ref{prop:quadrature} term by term, take square roots and convert averaged norms to unaveraged norms using the fixed domain measures. This gives
\[
\mathfrak R\lesssim C_{\rm train}\bigl(\epsilon_{\rm net}(N)+\epsilon_{\rm opt}\bigr)+\mathcal E_{\rm quad}.
\]
The conclusion follows from Theorem~\ref{thm:nonlinear-stability}.
\end{proof}

\subsection{A Computable Residual Indicator}
\label{sec:aposteriori}

Let
\[
\widehat Q=\{q_k\}_{k=1}^{\widehat N_{\rm int}}\subset Q,\qquad
\widehat\Sigma=\{s_k\}_{k=1}^{\widehat N_{\rm bc}}\subset\Sigma,\qquad
\widehat\Omega_0=\{x_k^0\}_{k=1}^{\widehat N_{\rm ic}}\subset\Omega,\qquad
\widehat\Omega_T=\{x_k^T\}_{k=1}^{\widehat N_{\rm T}}\subset\Omega
\]
be independent interior, boundary, initial, and terminal verification sets.
Let $\omega_k^Q$, $\omega_k^\Sigma$, $\omega_k^0$, and $\omega_k^T$ be
positive quadrature weights normalized for the corresponding sampling
measures; thus each family of weights sums to one. Define the four
soft-constraint contributions by
\begin{align*}
\eta_{\rm ic}^2
&:=\sum_{k=1}^{\widehat N_{\rm ic}}\omega_k^0
  |R^y_{\rm ic}(x_k^0)|^2,\\
\eta_{\rm bc}^{y,2}
&:=\sum_{k=1}^{\widehat N_{\rm bc}}\omega_k^\Sigma
  |R^y_{\rm bc}(s_k)|^2,\\
\eta_{\rm bc}^{\lambda,2}
&:=\sum_{k=1}^{\widehat N_{\rm bc}}\omega_k^\Sigma
  |R^\lambda_{\rm bc}(s_k)|^2,\\
\eta_T^{\lambda,2}
&:=\sum_{k=1}^{\widehat N_{\rm T}}\omega_k^T
  |R^\lambda_T(x_k^T)|^2.
\end{align*}
Define the global indicator by
\begin{equation}
\label{eq:estimator_global}
\eta^2:=\sum_{k=1}^{\widehat N_{\rm int}}\omega_k^Q
\bigl(|\mathcal R_y(q_k)|^2+|\mathcal R_\lambda(q_k)|^2\bigr)
+\sum_{k=1}^{\widehat N_{\rm bc}}\omega_k^\Sigma|\mathcal R_u(s_k)|^2
+\eta_{\rm ic}^2+\eta_{\rm bc}^{y,2}+\eta_{\rm bc}^{\lambda,2}+\eta_T^{\lambda,2},
\end{equation}
which approximates the sum of the seven averaged squared residual norms. Unlike the hard-constrained case, none of the four trace terms is identically zero, and all four must be estimated from the trained network rather than being exact by construction.

\begin{theorem}[Conditional reliability of the residual indicator]
\label{thm:aposteriori}
Under the hypotheses and local-neighborhood condition of Theorem~\ref{thm:nonlinear-stability}, let $\eta$ be evaluated on a verification set satisfying Assumption~\ref{assump:spacefilling}, with sample sizes $\widehat N_{\rm int},\widehat N_{\rm bc},\widehat N_{\rm ic},\widehat N_{\rm T}$. For each residual $\bullet$, let $\widehat n_\bullet$ denote its applicable verification sample size from this list, and write $\mathcal E_{\rm quad}(\widehat N_{\rm int},\widehat N_{\rm bc},\widehat N_{\rm ic},\widehat N_{\rm T})$ for the corresponding covering-error term defined in Proposition~\ref{prop:quadrature}. Set $C_{\rm ver}:=\max_\bullet C_{\widehat n_\bullet}^{1/2}$, with the sample-dependent constants from that proposition. Then
\[
\|(e_y,e_\lambda,e_u)\|_{\mathcal E}\;\lesssim\;C_{\rm stab}\bigl(C_{\rm ver}\eta+\mathcal E_{\rm quad}(\widehat N_{\rm int},\widehat N_{\rm bc},\widehat N_{\rm ic},\widehat N_{\rm T})\bigr)+C_{\rm stab}'M_{\rm bc},
\]
with probability at least $1-\sum_\bullet\sqrt{\widehat n_\bullet}(1-1/\sqrt{\widehat n_\bullet})^{\widehat n_\bullet}$. The indicator $\eta$ is directly computable, whereas $C_{\rm ver}$, the stability constants, the Hölder-dependent constants in $\mathcal E_{\rm quad}$, and $M_{\rm bc}$ are theoretical; the complete right-hand side is therefore not claimed to be a fully computable numerical upper bound.
\end{theorem}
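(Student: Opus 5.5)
The plan is to run the argument of Corollary~\ref{cor:apriori} again, with two substitutions: the verification set takes the place of the training collocation set, and the indicator $\eta$ takes the place of the training loss bound \eqref{eq:empirical-residual-bound}. No optimality of the trained triple is needed, because $\eta$ is evaluated directly on that triple.

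\textbf{Step 1: identify $\eta^2$ with the seven empirical losses.} By \eqref{eq:estimator_global}, $\eta^2$ is a sum of seven empirical mean squares. Each has the same structure as the loss $\mathcal L_\bullet$ in \eqref{eq:indirect_loss}, but is evaluated on the verification points $\widehat Q,\widehat\Sigma,\widehat\Omega_0,\widehat\Omega_T$ with normalized weights. Write $\widehat{\mathcal L}_\bullet$ for these seven terms. Then each satisfies $\widehat{\mathcal L}_\bullet\le\eta^2$.

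\textbf{Step 2: convert empirical sums to continuous norms.} The verification set satisfies Assumption~\ref{assump:spacefilling}, and the trained network is fixed. Proposition~\ref{prop:quadrature}, applied with $\mathcal L_\bullet$ replaced by $\widehat{\mathcal L}_\bullet$ and $n_\bullet$ by $\widehat n_\bullet$, then gives for each residual
\[
\|\bullet\|^2_{L^2_{\rm av}(D_\bullet)}\le C_{\widehat n_\bullet}\widehat{\mathcal L}_\bullet+C'\widehat n_\bullet^{-\beta_\bullet/d_\bullet}.
\]
A union bound yields all seven inequalities simultaneously, with exactly the stated probability. Summing them, taking square roots, using $\sqrt{p+q}\le\sqrt p+\sqrt q$, and passing from averaged to unaveraged norms via the fixed measures $|Q|,|\Sigma|,|\Omega|$ gives
\[
\mathfrak R\lesssim C_{\rm ver}\eta+\mathcal E_{\rm quad}(\widehat N_{\rm int},\widehat N_{\rm bc},\widehat N_{\rm ic},\widehat N_{\rm T}).
\]
Here $\|e_{y,0}\|_{L^2(\Omega)}=\|R^y_{\rm ic}\|_{L^2(\Omega)}$, and only five of the seven residuals actually enter $\mathfrak R$; the two adjoint trace terms are harmless extra contributions on the right.

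\textbf{Step 3: conclude.} Theorem~\ref{thm:nonlinear-stability} applies because its hypotheses, including $\rho\le\rho_0$, are assumed. It gives $\|e\|_{\mathcal E}\le2C_{\rm stab}\mathfrak R+2C'_{\rm stab}M_{\rm bc}$. Inserting the Step 2 bound and absorbing the factor $2$ into $\lesssim$ yields the claim.

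The only delicate point is that the Hölder constants in Proposition~\ref{prop:quadrature} depend on the fixed trained network. This is acceptable here because the statement is for a single network and explicitly designates those constants as theoretical and not computable, so no uniformity along a sequence of networks is needed.
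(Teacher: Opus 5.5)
Your proposal is correct and follows essentially the same route as the paper. Both apply Proposition~\ref{prop:quadrature} on the four verification sets to the seven terms in $\eta^2$, take a union bound, and convert averaged to unaveraged norms to obtain $\mathfrak R\lesssim C_{\rm ver}\eta+\mathcal E_{\rm quad}$ before invoking Theorem~\ref{thm:nonlinear-stability}, with the adjoint boundary and terminal traces entering only through the additive $M_{\rm bc}$ term.
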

\begin{proof}
Apply Proposition~\ref{prop:quadrature} on the four verification sets
$\widehat Q$, $\widehat\Sigma$, $\widehat\Omega_0$, and
$\widehat\Omega_T$, which satisfy Assumption~\ref{assump:spacefilling} by
hypothesis, to each of the seven quantities appearing in $\eta^2$. With the
stated probability,
\[
\|\mathcal R_y\|_{L^2_{\rm av}(Q)}^2+\|\mathcal R_\lambda\|_{L^2_{\rm av}(Q)}^2+\|\mathcal R_u\|_{L^2_{\rm av}(\Sigma)}^2+\|R^y_{\rm ic}\|_{L^2_{\rm av}(\Omega)}^2\lesssim C_{\rm ver}^2\eta^2+\mathcal E_{\rm quad}^2,
\]
because the weighted sums in $\eta^2$ are the empirical quadrature estimates, while Proposition~\ref{prop:quadrature} supplies the sample-dependent factors and the additive covering errors. The same argument applies to $R^y_{\rm bc}$. The adjoint boundary and terminal traces remain controlled through the additive $M_{\rm bc}$ term in Theorem~\ref{thm:nonlinear-stability}. Converting averaged to unaveraged norms by the fixed domain measures and applying that theorem proves the claim.
\end{proof}
\section{Numerical Experiments}
\label{sec:experiments}
We illustrate the direct and indirect PINN formulations of
Section~\ref{sec:method} on two manufactured one-dimensional problems: a
cubic reaction-diffusion test with a zero optimal control, and a linear
test with nonzero control and adjoint, as previewed in the
Introduction. These experiments are intended to compare the two
formulations empirically; they do not verify the hypotheses of
Theorems~\ref{thm:quant-stability}-\ref{thm:aposteriori}, and in
particular we do not numerically certify the sufficient stability
condition $\alpha>\alpha_0$.

Both tests use $\Omega=(0,1)$, $T=1$, $\nu=0.1$, and $\alpha=10^{-2}$. All networks are fully connected, use $\tanh$ activations and Xavier initialization, and are trained in double precision. A single control network $U_\psi(x,t)$ is evaluated at $x=0$ and $x=1$ to obtain the two boundary traces.

\begin{table}[H]
\centering
\caption{Neural-network architectures.}
\label{tab:architecture}
\begin{tabular}{lcccc}
\toprule
Network & Input & Output & Hidden layers & Neurons/layer\\
\midrule
$Y_\theta$ (state) & $(x,t)$ & scalar & 4 & 50\\
$U_\psi$ (control) & $(x,t)$ & scalar & 3 & 30\\
$\Lambda_\phi$ (adjoint) & $(x,t)$ & scalar & 4 & 50\\
\bottomrule
\end{tabular}
\end{table}

Training uses uniform Monte-Carlo samples: $10\,000$ interior points, a stratified boundary batch with $2\,000$ time samples on each boundary component, and $500$ initial points. For the indirect formulation, $N_{\rm T}=500$ additional spatial points on the terminal slice $t=T$ enforce $\Lambda_\phi(\cdot,T)=0$. These terminal points do not represent observations in the distributed tracking objective; they impose the terminal condition of the backward adjoint equation. Points are resampled every $500$ Adam epochs. Adam is run for $20\,000$ epochs from learning rate $10^{-3}$, with decay factor $0.9$ every $1\,000$ epochs, followed by at most $5\,000$ limited-memory Broyden-Fletcher-Goldfarb-Shanno (L-BFGS) iterations. The loss weights are listed in Table~\ref{tab:weights}. The reported values correspond to one run with seed zero.

\begin{table}[H]
\centering
\caption{Loss weights used in both experiments.}
\label{tab:weights}
\begin{tabular}{lcc}
\toprule
Term & Direct PINN & Indirect PINN\\
\midrule
State residual ($w_{\rm res}$ / $w_y$) & 1 & 1\\
State boundary condition ($w_{\rm bc}$ / $w_{\rm bc}^y$) & 1 & 1\\
State initial condition ($w_{\rm ic}$) & 10 & 10\\
Adjoint residual ($w_\lambda$) & - & 1\\
Stationarity residual ($w_u$) & - & 1\\
Adjoint boundary condition ($w_{\rm bc}^\lambda$) & - & 1\\
Adjoint terminal condition ($w_{\rm T}^\lambda$) & - & 1\\
\bottomrule
\end{tabular}
\end{table}

The entries in the two method columns multiply different loss functionals and
therefore should not be compared as numerical quantities across columns. In the
one-dimensional paired-endpoint implementation, all boundary losses in both
methods are empirical averages over the $2N_{\rm bc}$ endpoint-time samples, as
specified in Section~\ref{sec:method}.

The tracking and Tikhonov terms in the direct loss are not tunable penalty
terms: they use the fixed coefficients $|Q|/2$ and $\alpha|\Sigma|/2$ from
\eqref{eq:direct_loss}. For the present tests, $|Q|=1$ and $|\Sigma|=2$ under
counting measure on $\partial\Omega=\{0,1\}$.

\subsection{Example 1: cubic reaction–diffusion equation}

\subsubsection{Problem definition}

The first test is a cubic reaction-diffusion equation with $f(y)=y^3$. We prescribe
\begin{equation}
y^*(x,t)=\sin(\pi x)e^{-t},\qquad u^*(0,t)=u^*(1,t)=0,
\end{equation}
and $y_0(x)=\sin(\pi x)$. The manufactured source is
\begin{equation}
\label{eq:source_ex1}
g(x,t)=(-1+\nu\pi^2)\sin(\pi x)e^{-t}+\sin^3(\pi x)e^{-3t}.
\end{equation}
Choosing $y_d=y^*$ makes $(y^*,u^*)$ feasible with $J(y^*,u^*)=0$; hence it is a global optimum. The exact adjoint is $\lambda^*=0$.

\subsubsection{Direct and Indirect results}

Figure~\ref{fig:ex1_state} compares the exact state and the two PINN approximations. The direct and indirect state root-mean-square errors (RMSEs) are $2.34\times10^{-3}$ and $6.80\times10^{-4}$, respectively. Their maximum pointwise state errors are $1.648\times10^{-2}$ and $2.551\times10^{-3}$.

\begin{figure}[H]
\centering
\includegraphics[width=0.98\textwidth]{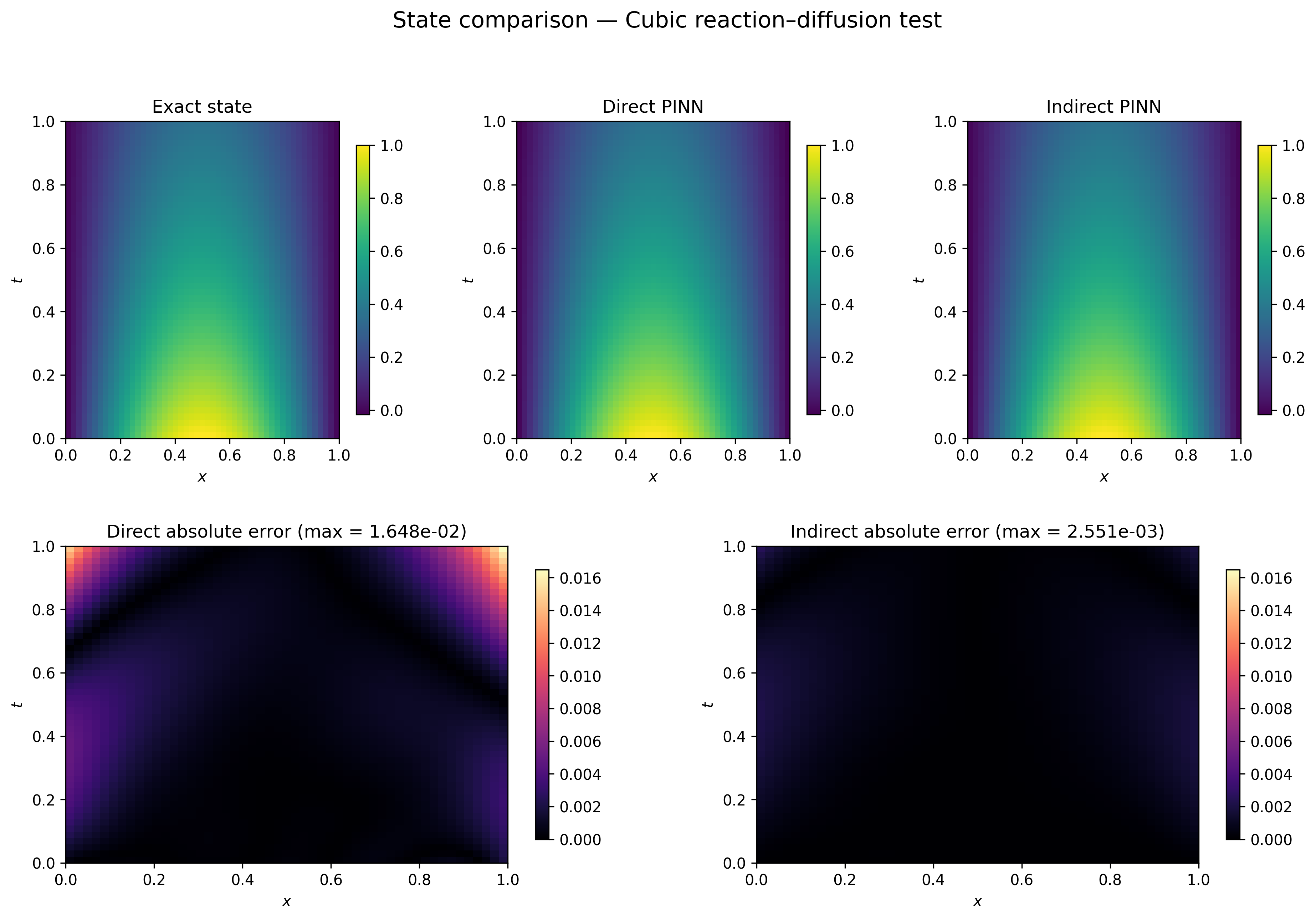}
\caption{State approximation and absolute error for the cubic reaction-diffusion test. Both error panels use the same color scale.}
\label{fig:ex1_state}
\end{figure}

The boundary-control RMSE decreases from $5.98\times10^{-3}$ for the direct method to $1.43\times10^{-3}$ for the indirect method; see Figures~\ref{fig:ex1_control}-\ref{fig:ex1_metrics}.

\begin{figure}[H]
\centering
\includegraphics[width=0.92\textwidth]{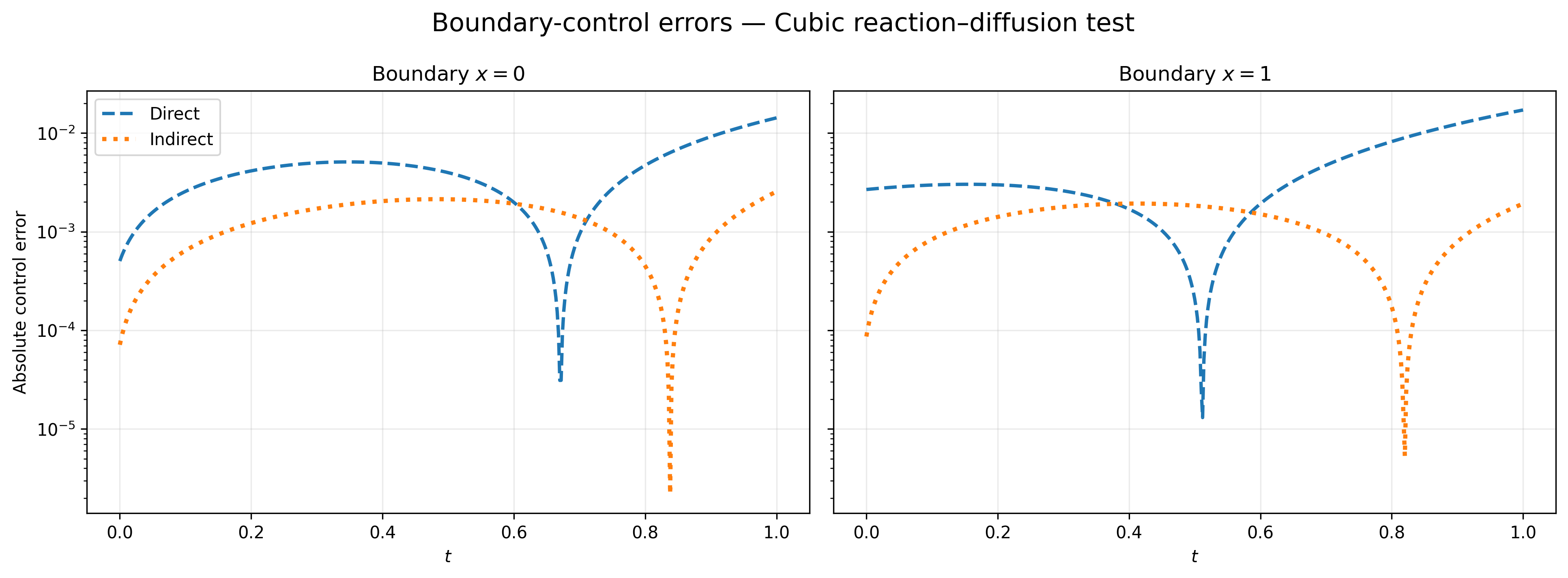}
\caption{Absolute boundary-control errors on the left and right endpoints for Example~1.}
\label{fig:ex1_control}
\end{figure}

\begin{figure}[H]
\centering
\includegraphics[width=0.72\textwidth]{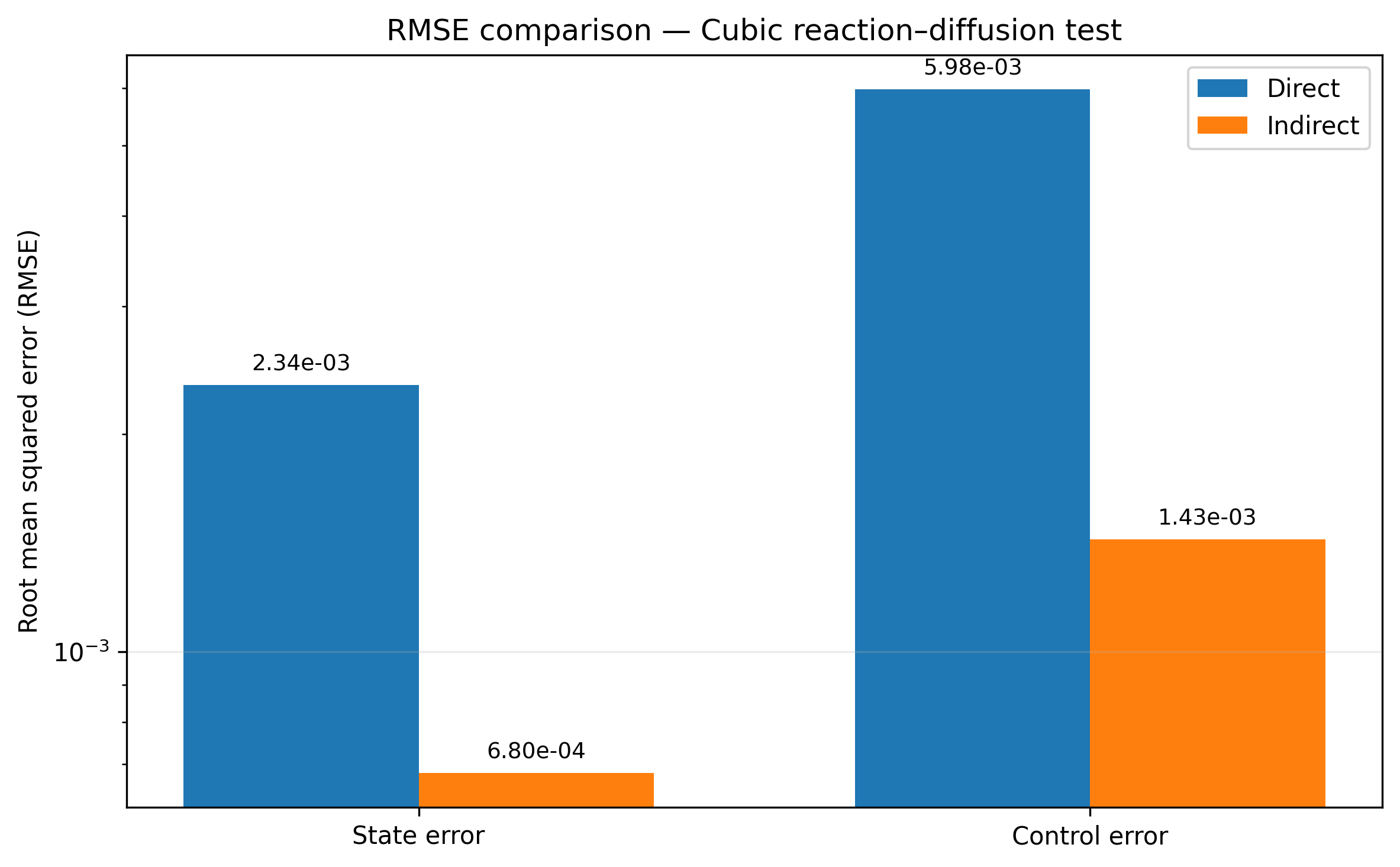}
\caption{RMSE comparison for Example~1.}
\label{fig:ex1_metrics}
\end{figure}

\subsubsection{Discussion}

The indirect formulation reduces the state RMSE by approximately $70.9\%$ and the control RMSE by approximately $76.1\%$. Its largest adjoint error is $1.849\times10^{-4}$, consistent with the exact value $\lambda^*=0$ (Figure~\ref{fig:ex1_adjoint}). The verification maps in Figure~\ref{fig:ex1_residuals} show generally smaller state residuals for the indirect method. The stationarity residual remains mainly of order $10^{-4}$ and decreases near the terminal time (Figure~\ref{fig:ex1_stationarity}).

\begin{figure}[H]
\centering
\includegraphics[width=0.82\textwidth]{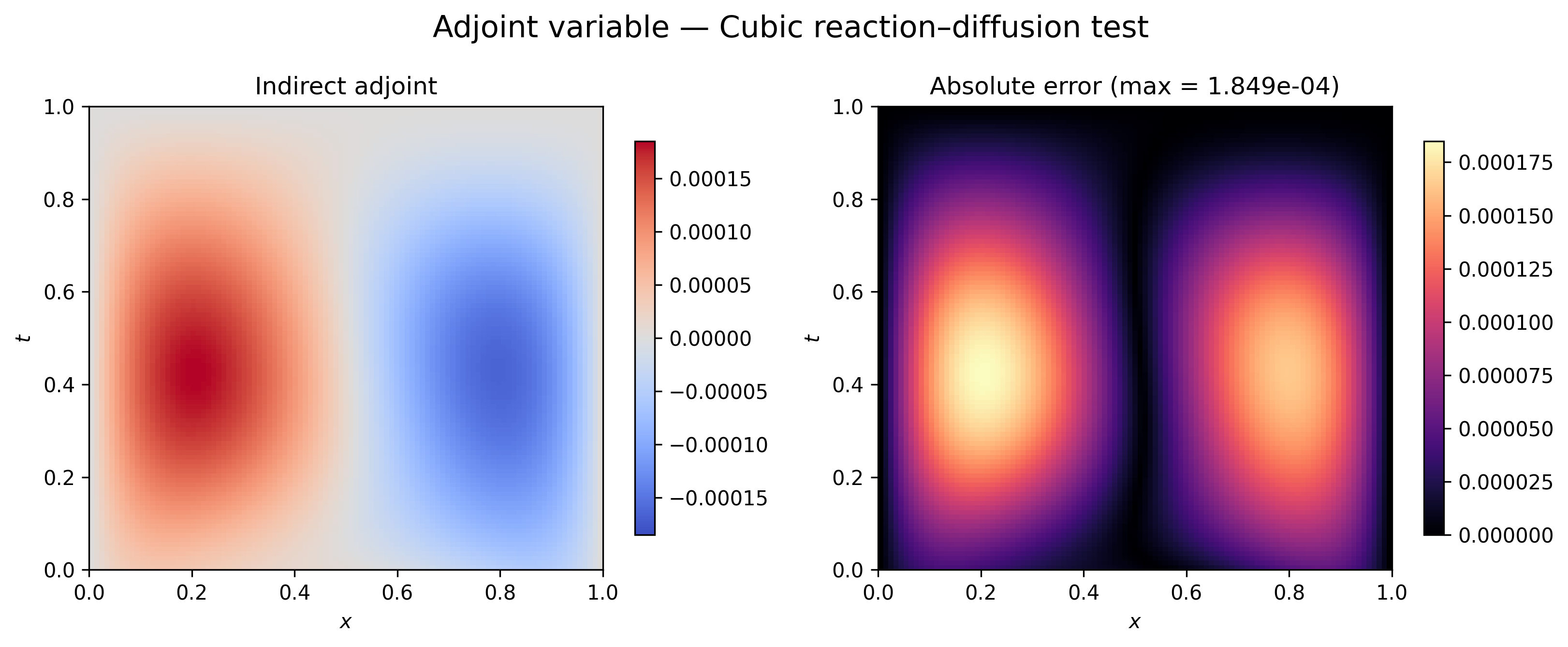}
\caption{Indirect adjoint approximation and absolute error for Example~1.}
\label{fig:ex1_adjoint}
\end{figure}

\begin{figure}[H]
\centering
\includegraphics[width=0.98\textwidth]{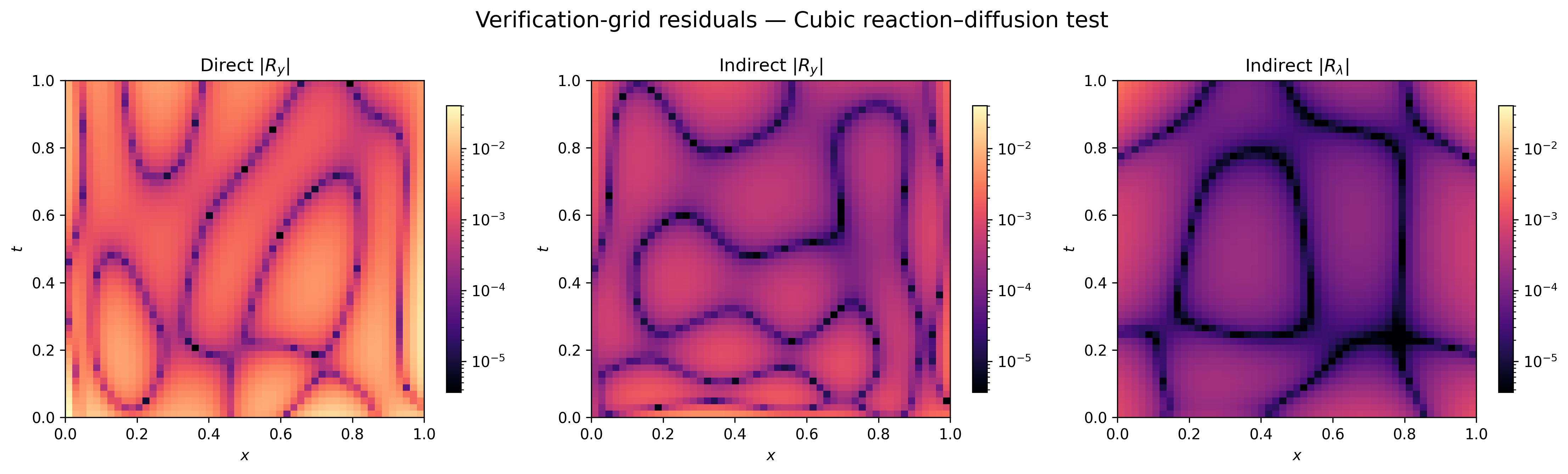}
\caption{State and adjoint residuals evaluated on an independent verification grid for Example~1. A common logarithmic color scale is used.}
\label{fig:ex1_residuals}
\end{figure}

\begin{figure}[H]
\centering
\includegraphics[width=0.74\textwidth]{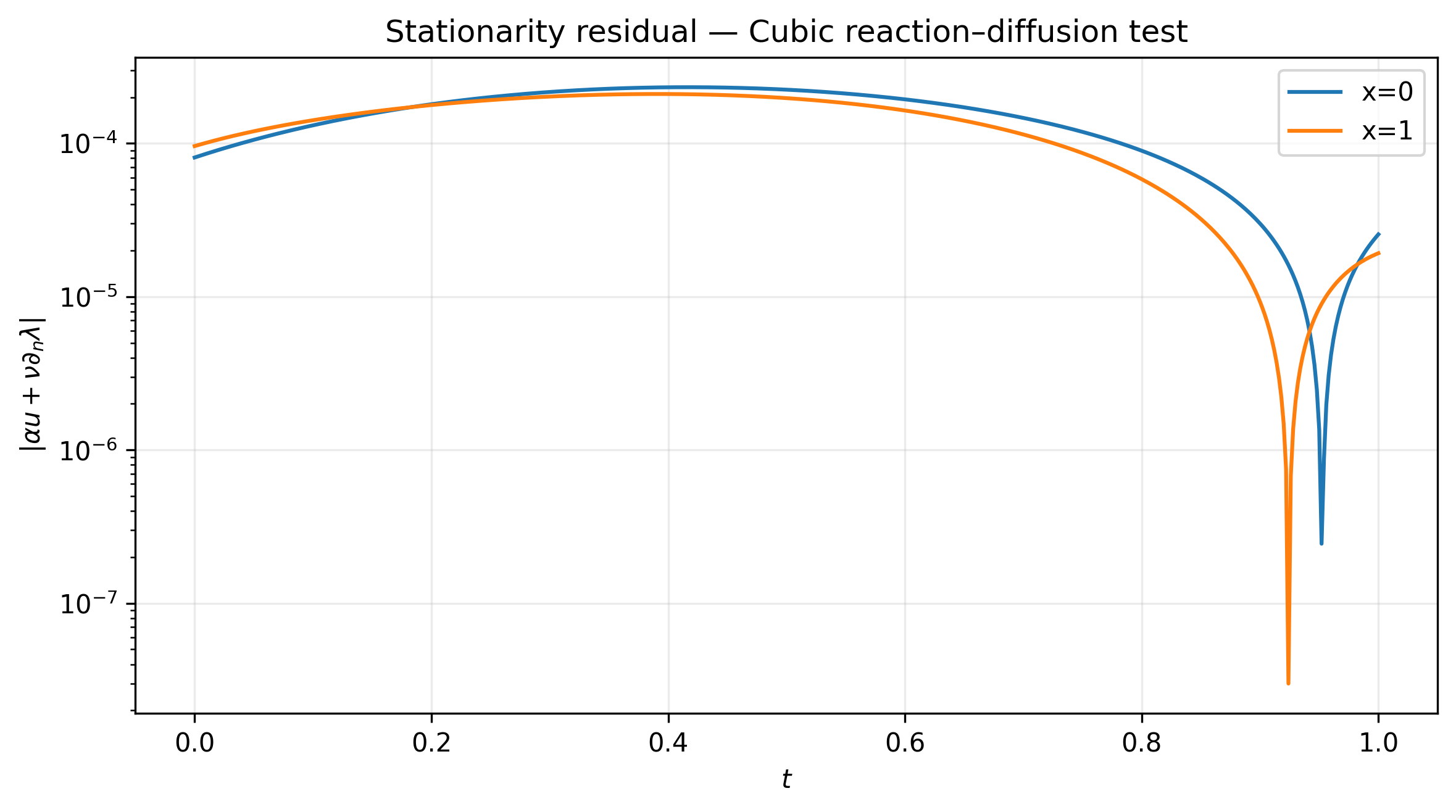}
\caption{Stationarity residual $|\alpha u+\nu\partial_n\lambda|$ at both endpoints for Example~1.}
\label{fig:ex1_stationarity}
\end{figure}

\subsection{Example 2: Linear boundary-control problem}

\subsubsection{Construction of the exact KKT solution}

The second test uses the linear heat equation, $f(y)=0$, and is constructed so that both the control and adjoint are nonzero. Let $A=0.05$ and prescribe
\begin{align}
\lambda^*(x,t)&=A\,x(1-x)\sin(\pi t),\\
u^*(0,t)=u^*(1,t)&=\frac{\nu A}{\alpha}\sin(\pi t)=0.5\sin(\pi t),\\
y^*(x,t)&=0.5\sin(\pi t)+\sin(\pi x)e^{-t}.
\end{align}
The corresponding initial datum is $y_0(x)=y^*(x,0)=\sin(\pi x)$.
The source and desired trajectory are then defined from the state and adjoint equations:
\begin{align}
g(x,t)&=0.5\pi\cos(\pi t)+(-1+\nu\pi^2)\sin(\pi x)e^{-t},\\
y_d(x,t)&=y^*(x,t)-A\pi x(1-x)\cos(\pi t)+2\nu A\sin(\pi t).
\end{align}
The triplet $(y^*,u^*,\lambda^*)$ satisfies the complete optimality system. Since the dynamics are linear and the objective is strictly convex in the control, it is the unique optimum.

\subsubsection{Direct and Indirect results}

Figure~\ref{fig:ex2_state} shows close agreement with the exact state. The direct and indirect state RMSEs are $1.65\times10^{-3}$ and $1.38\times10^{-3}$; the corresponding maximum errors are $8.982\times10^{-3}$ and $5.604\times10^{-3}$.

\begin{figure}[H]
\centering
\includegraphics[width=0.98\textwidth]{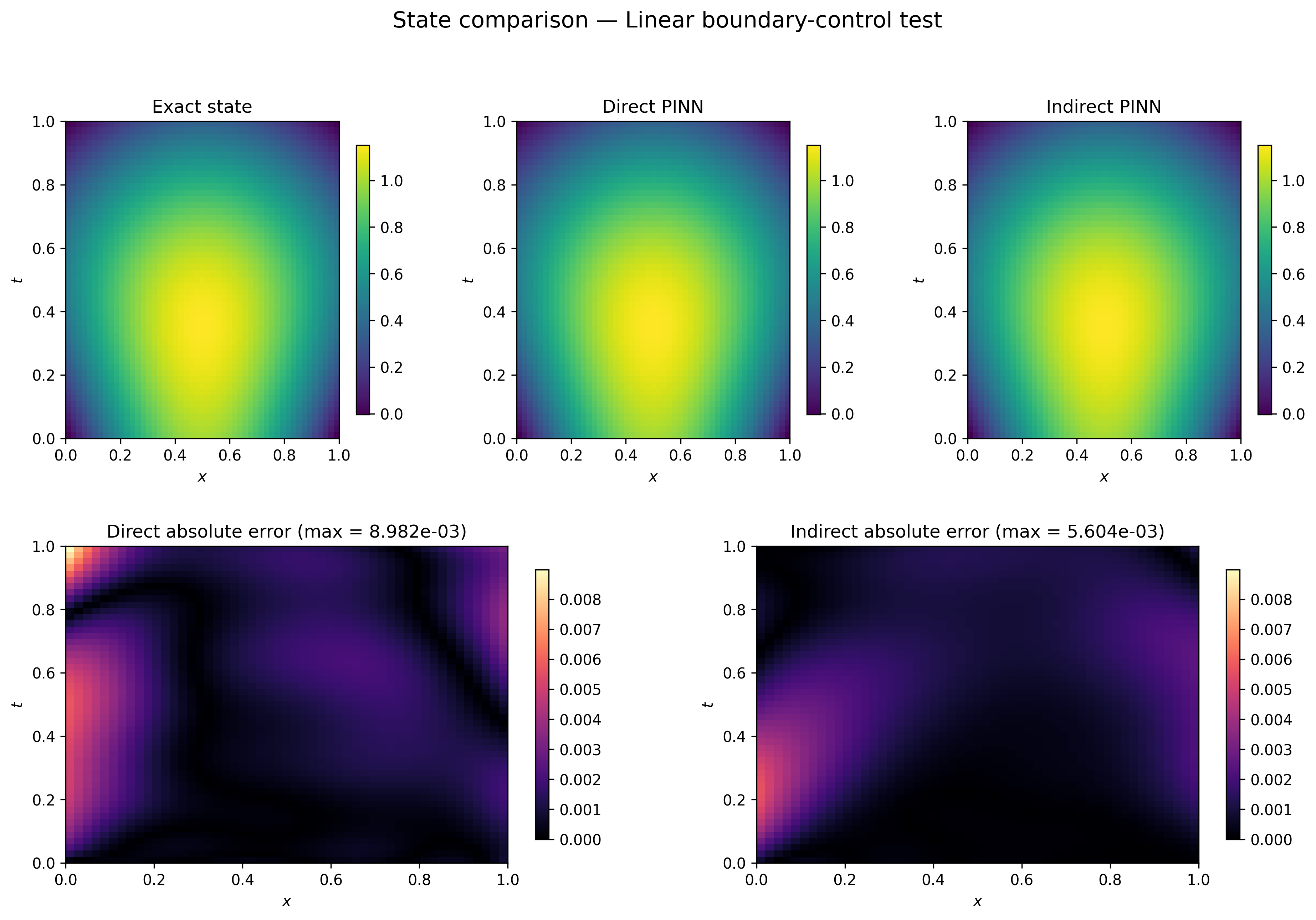}
\caption{State approximation and absolute error for the nontrivial manufactured KKT test.}
\label{fig:ex2_state}
\end{figure}

The control RMSE decreases from $4.08\times10^{-3}$ to $2.55\times10^{-3}$ (Figures~\ref{fig:ex2_control} and~\ref{fig:ex2_metrics}). The indirect adjoint has maximum absolute error $6.079\times10^{-4}$ (Figure~\ref{fig:ex2_adjoint}).

\begin{figure}[H]
\centering
\includegraphics[width=0.92\textwidth]{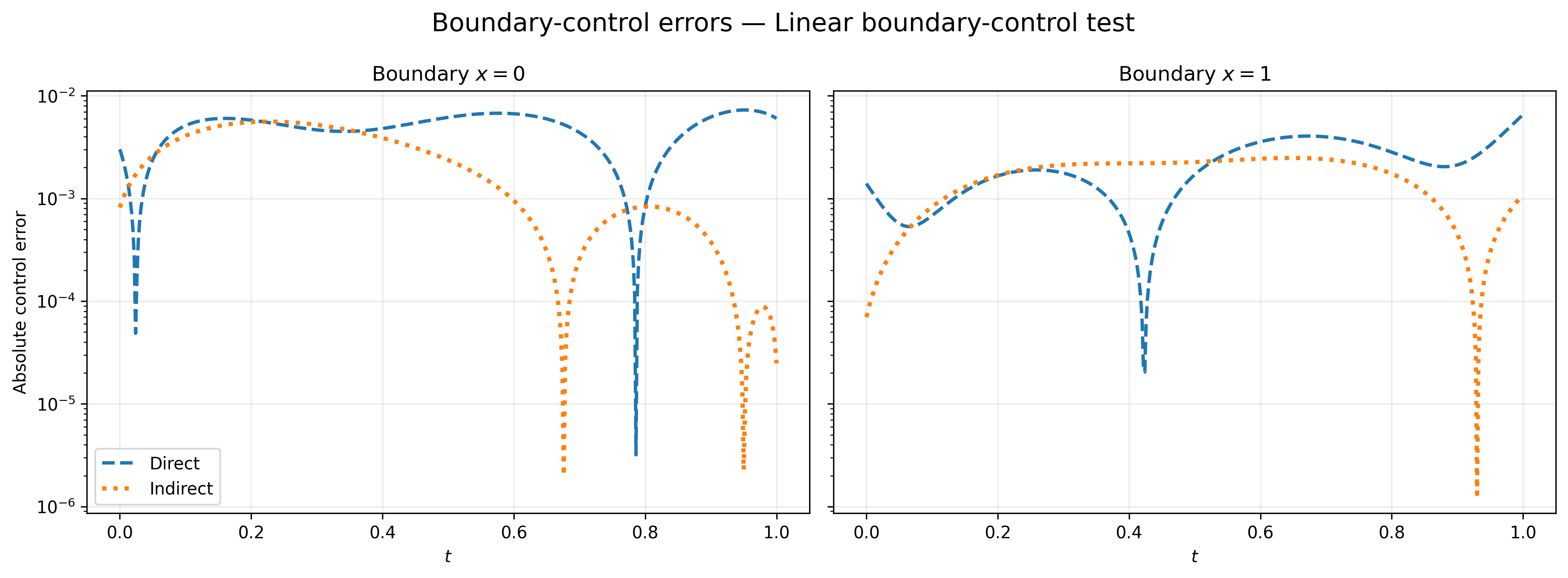}
\caption{Absolute boundary-control errors for Example~2.}
\label{fig:ex2_control}
\end{figure}

\begin{figure}[H]
\centering
\includegraphics[width=0.82\textwidth]{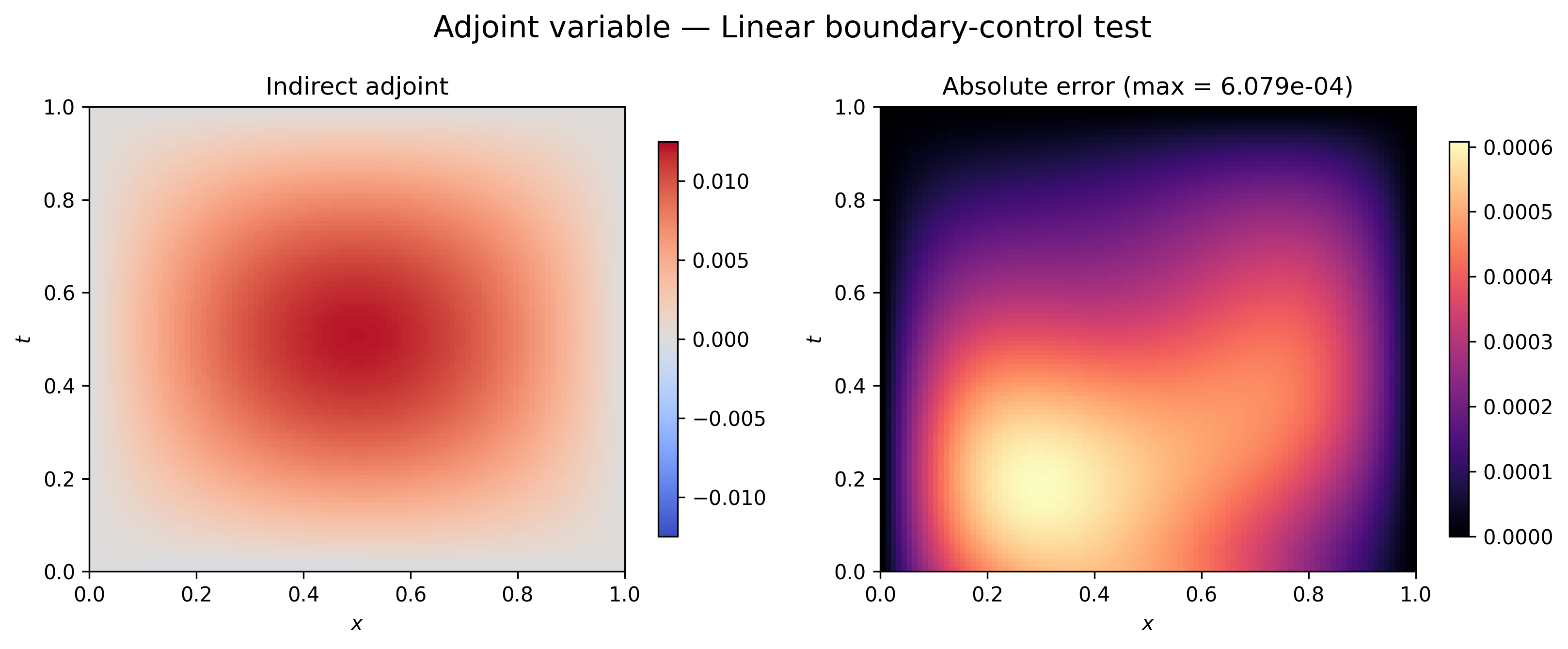}
\caption{Indirect adjoint approximation and absolute error for Example~2.}
\label{fig:ex2_adjoint}
\end{figure}

\begin{figure}[H]
\centering
\includegraphics[width=0.72\textwidth]{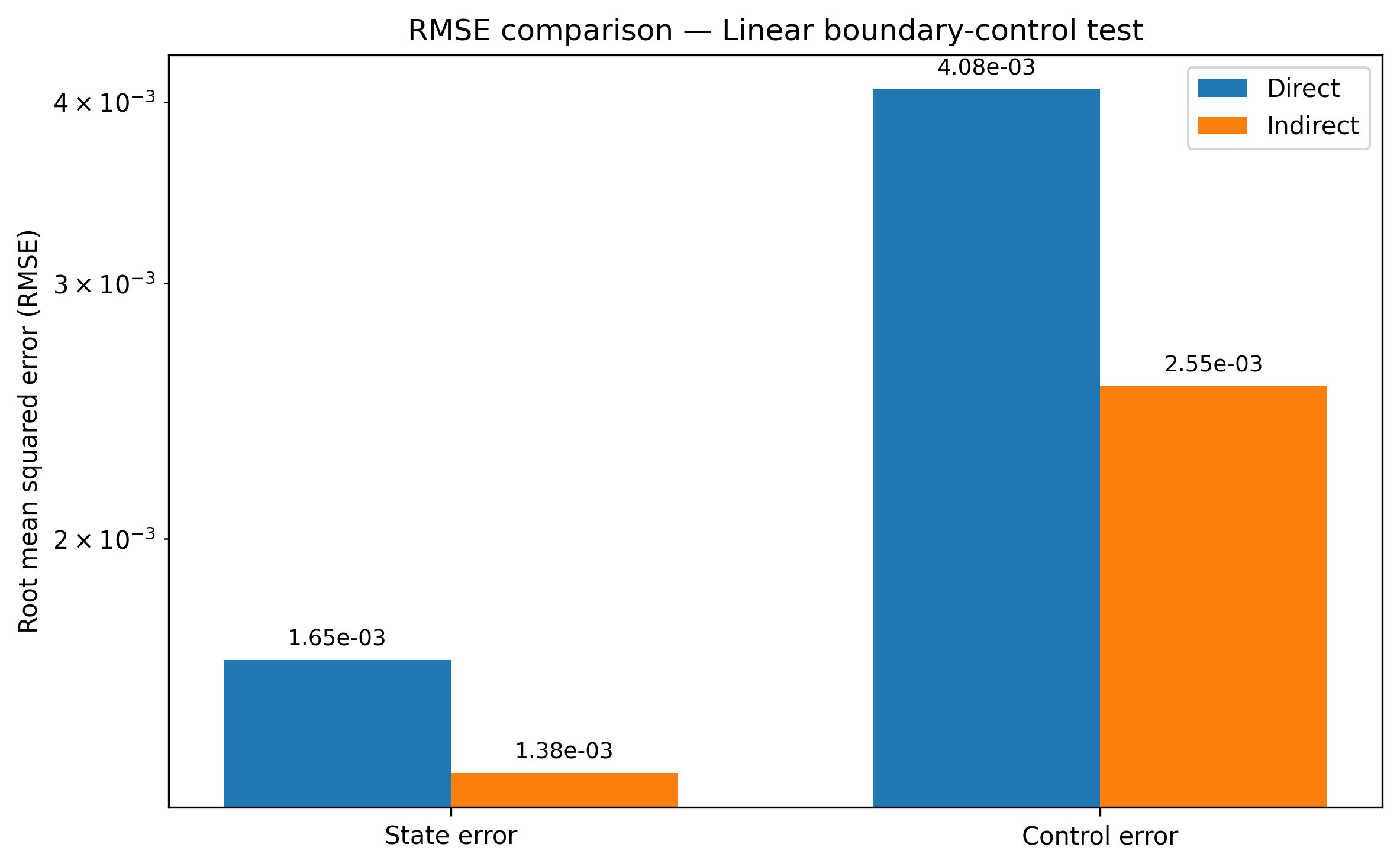}
\caption{RMSE comparison for Example~2.}
\label{fig:ex2_metrics}
\end{figure}

\subsubsection{Discussion}

The indirect method reduces the state RMSE by approximately $16.4\%$ and the control RMSE by approximately $37.5\%$. Unlike Example~1, this test has a nonzero control and a nontrivial adjoint, so the backward adjoint and stationarity equations are tested with nonzero solution fields. The control error is not pointwise smaller at every time, but its global RMSE is lower. Figures~\ref{fig:ex2_residuals} and \ref{fig:ex2_stationarity} show that the learned fields remain consistent with the PDE and first-order optimality conditions on independent verification points.

\begin{figure}[H]
\centering
\includegraphics[width=0.98\textwidth]{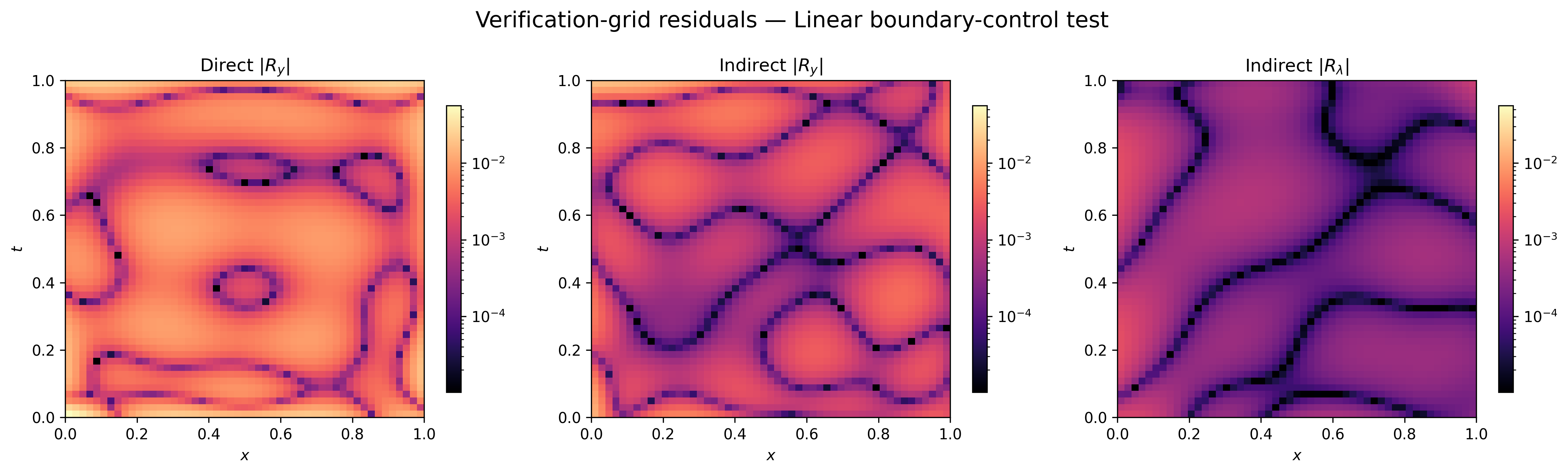}
\caption{Verification-grid state and adjoint residuals for Example~2.}
\label{fig:ex2_residuals}
\end{figure}

\begin{figure}[H]
\centering
\includegraphics[width=0.74\textwidth]{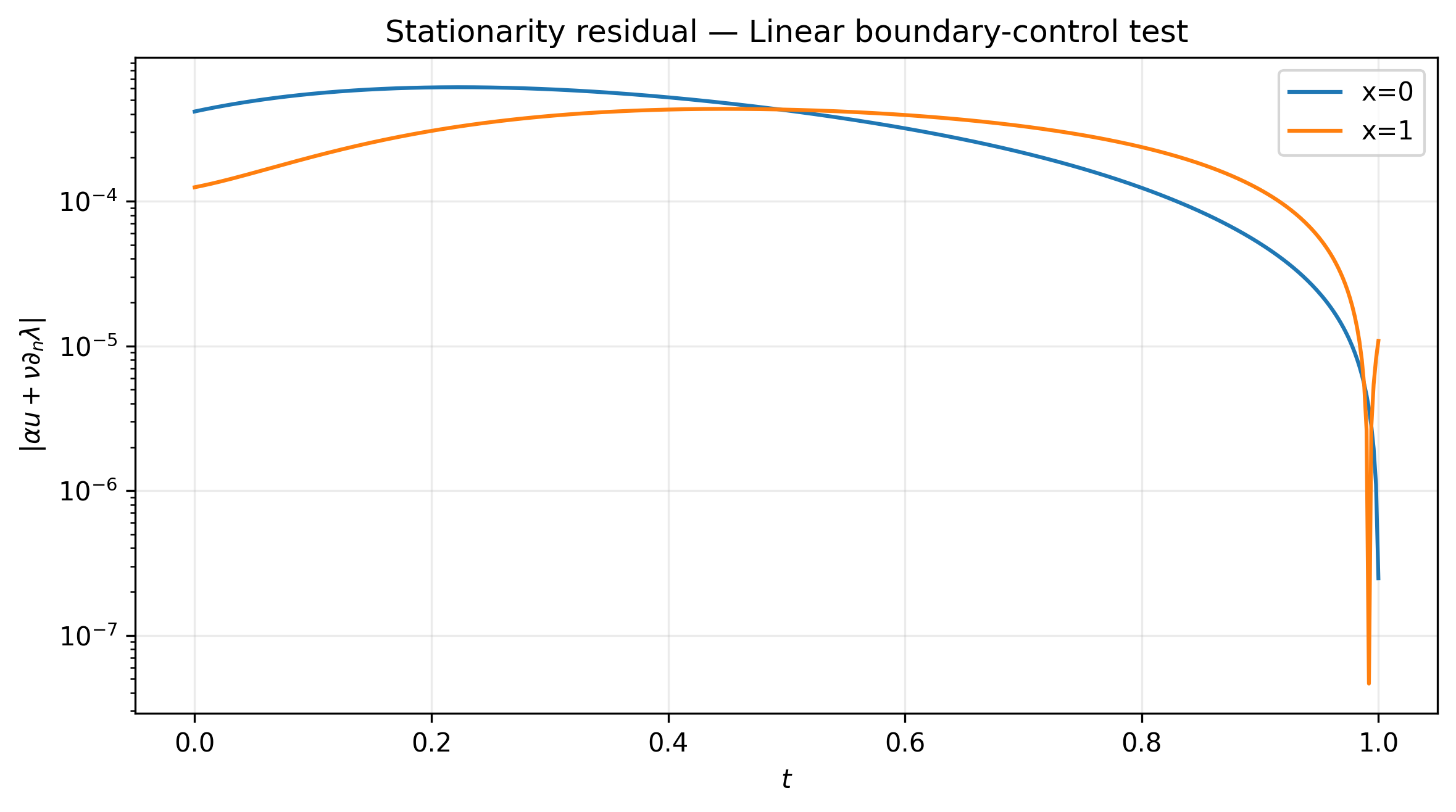}
\caption{Stationarity residual at both endpoints for Example~2.}
\label{fig:ex2_stationarity}
\end{figure}

\subsection{Overall comparison}

\begin{table}[H]
\centering
\caption{Summary of the numerical errors.}
\label{tab:overall_results}
\begin{tabular}{llccc}
\toprule
Example & Method & State RMSE & Control RMSE & Maximum state error\\
\midrule
1 & Direct & $2.34\times10^{-3}$ & $5.98\times10^{-3}$ & $1.648\times10^{-2}$\\
1 & Indirect & $6.80\times10^{-4}$ & $1.43\times10^{-3}$ & $2.551\times10^{-3}$\\
2 & Direct & $1.65\times10^{-3}$ & $4.08\times10^{-3}$ & $8.982\times10^{-3}$\\
2 & Indirect & $1.38\times10^{-3}$ & $2.55\times10^{-3}$ & $5.604\times10^{-3}$\\
\bottomrule
\end{tabular}
\end{table}

For the present runs, the indirect formulation gives lower state and control RMSE in both examples. The improvement is strongest in Example~1, while Example~2 confirms that the method also recovers a nontrivial boundary control and adjoint. These single-seed results provide an accuracy comparison rather than an equal-computational-budget or statistical comparison.
\section{Conclusions}
\label{sec:conclusions}

We extended the direct-indirect PINN framework to a semilinear parabolic optimal-control problem with Dirichlet boundary actuation. A single network $U_\psi(x,t)$ represents the control through its restriction to the entire lateral boundary. In the indirect method, the state, adjoint, and control are represented by unconstrained networks, with the boundary coupling $y=u$, the homogeneous adjoint boundary condition, and the adjoint terminal condition all imposed as soft penalty terms in the training loss rather than enforced architecturally.

Two manufactured tests were considered. 
The cubic reaction-diffusion test reproduces the original zero-control setting, whereas the linear KKT test contains a nonzero control and adjoint.
Independent verification-grid residuals also showed consistency with the state, adjoint, and stationarity equations.

The error estimates remain conditional on well-posedness, regularity, stability, residual-compatible network approximability, the uniform H\"older control of the boundary/terminal residuals in (H5), and the local-neighborhood assumption (H6) used to absorb the nonlinear Taylor remainders. In particular, no efficiency result is claimed for the meshfree residual indicator, and the sufficient condition $\alpha>\alpha_0$ is not numerically certified in the experiments. Future work should include repeated random seeds, convergence studies with increasing collocation sets, genuinely low-regularity Dirichlet data, nonlinear tests with nontrivial controls, and empirical verification of (H5)-(H6) (e.g.\ via weight control and post-hoc checks on a held-out grid).

\appendix
\section{Appendix: Auxiliary results}
\label{app:cited-results}

\subsection{Parabolic Schauder estimate with general boundary and terminal data}
\label{app:schauder}

The following lemma is the single Schauder-type estimate used throughout
Section~\ref{sec:error-analysis}: with $(g_1,v_0)=(u^*,y_0)$ it gives state
regularity, and with homogeneous data it gives adjoint regularity in
Proposition~\ref{prop:regularity};
with inhomogeneous data $(g_1,v_0)=(-R^\lambda_{\rm bc},-R^\lambda_T)$ it gives
Step~2 of Theorem~\ref{thm:quant-stability}. Stating it once, for general data,
removes the need for two separate citation routes.

\begin{lemma}[Parabolic Schauder estimate]
\label{lem:schauder}
Let $\Omega\subset\mathbb R^d$ have $C^{2+\beta}$ boundary for some
$\beta\in(0,1)$, let $T>0$, $Q=\Omega\times(0,T)$, $\Sigma=\partial\Omega\times(0,T)$.
Let $a\in C^{\beta,\beta/2}(\overline Q)$, and suppose
\[
h\in C^{\beta,\beta/2}(\overline Q),\qquad
g_1\in C^{2+\beta,1+\beta/2}(\Sigma),\qquad
v_0\in C^{2+\beta}(\overline\Omega),
\]
satisfy the parabolic compatibility conditions at $t=0$,
\[
v_0=g_1(\cdot,0),\qquad
\partial_tg_1(\cdot,0)=\nu\Delta v_0-a(\cdot,0)v_0+h(\cdot,0)
\quad\text{on }\partial\Omega.
\]
Then the linear problem
\[
\partial_t v-\nu\Delta v+av=h\ \text{ in }Q,\qquad v=g_1\ \text{ on }\Sigma,\qquad v(\cdot,0)=v_0\ \text{ in }\Omega,
\]
has a unique solution $v\in C^{2+\beta,1+\beta/2}(\overline Q)$, and
\begin{equation}
\label{eq:schauder-general}
\|v\|_{C^{2+\beta,1+\beta/2}(\overline Q)}
\;\le\;
C_{\rm Sch}\Bigl(\|h\|_{C^{\beta,\beta/2}(\overline Q)}
+\|g_1\|_{C^{2+\beta,1+\beta/2}(\Sigma)}
+\|v_0\|_{C^{2+\beta}(\overline\Omega)}\Bigr),
\end{equation}
with $C_{\rm Sch}=C_{\rm Sch}(\nu,\Omega,T,\beta,\|a\|_{C^{\beta,\beta/2}(\overline Q)})$.
\end{lemma}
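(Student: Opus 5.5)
This lemma is essentially the classical linear parabolic Schauder theory, so the plan is to reduce it to the standard statement rather than re-derive it. I would invoke the global Schauder theorem for the first initial--boundary value problem, e.g.\ Ladyzhenskaya--Solonnikov--Ural'tseva, Thm.~IV.5.2, or Lieberman, Thm.~5.14. Both are stated for operators $\partial_t v-\sum a_{ij}\partial_{ij}v+\sum b_i\partial_iv+cv$ with H\"older coefficients on a $C^{2+\beta}$ cylinder, for data $(h,g_1,v_0)$ in exactly the spaces listed and subject to the first-order compatibility conditions at $\partial\Omega\times\{0\}$. Our operator is the special case $a_{ij}=\nu\delta_{ij}$, $b_i=0$, $c=a$. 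The constant-coefficient principal part is trivially uniformly parabolic with ellipticity constant $\nu$. The zeroth-order coefficient lies in $C^{\beta,\beta/2}(\overline Q)$, which is precisely the regularity the cited theorem requires.

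First I would verify the hypotheses item by item. The boundary regularity $\partial\Omega\in C^{2+\beta}$ and the data regularity are assumed directly. For compatibility, the zeroth-order condition $v_0=g_1(\cdot,0)$ on $\partial\Omega$ is required. The first-order condition $\partial_tg_1(\cdot,0)=\nu\Delta v_0-a(\cdot,0)v_0+h(\cdot,0)$ is exactly what one obtains by evaluating the equation at $t=0$ on $\partial\Omega$, using $v=g_1$ there. These match the conditions of order $[(2+\beta)/2]=1$ demanded by the cited theorem.

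The theorem then yields existence, uniqueness in $C^{2+\beta,1+\beta/2}(\overline Q)$, and the estimate \eqref{eq:schauder-general}. Its constant depends on $d,\beta,\Omega,T$, the ellipticity $\nu$, and the H\"older norm of $a$, which gives the stated dependence of $C_{\rm Sch}$. I would also add the remark that uniqueness follows independently from the maximum principle, or equivalently from the energy estimate of Lemma~\ref{lem:coercivity} combined with Gr\"onwall, applied to the difference of two solutions.

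The only genuinely delicate point is the regularity of the boundary datum. Classical statements often phrase the requirement as: $g_1$ is the restriction to $\Sigma$ of a function in $C^{2+\beta,1+\beta/2}(\overline Q)$. Here, instead, $g_1$ is given intrinsically on the manifold $\Sigma$. I would bridge this with a bounded extension operator $E:C^{2+\beta,1+\beta/2}(\Sigma)\to C^{2+\beta,1+\beta/2}(\overline Q)$, constructed by flattening $\partial\Omega$ through local $C^{2+\beta}$ charts, extending constantly in the normal variable times a cutoff, and patching with a partition of unity. This extension satisfies $\|Eg_1\|\lesssim\|g_1\|_{C^{2+\beta,1+\beta/2}(\Sigma)}$. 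Alternatively, one can subtract $w:=Eg_1$, which reduces the problem to homogeneous boundary data with forcing $h-(\partial_t-\nu\Delta+a)w\in C^{\beta,\beta/2}$ and initial datum $v_0-w(\cdot,0)$. The compatibility conditions are preserved under this reduction.

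A final remark concerns how the lemma is used. For the backward adjoint problems in the paper, one applies the lemma after the substitution $\tau=T-t$. This maps the terminal-time corner condition required in (H5) onto the $t=0$ compatibility condition assumed here.
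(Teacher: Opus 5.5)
You and the paper both reduce the lemma to classical linear parabolic Schauder theory, but you divide the work differently.

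The paper takes existence and uniqueness from Friedman's theorem. It then only sketches how the bound \eqref{eq:schauder-general} follows by localization and freezing of coefficients. You instead cite a global theorem (Ladyzhenskaya--Solonnikov--Ural'tseva, Thm.~IV.5.2) whose conclusion already contains existence, uniqueness, and the a priori estimate. Its constant depends on $\nu$, $\Omega$, $T$, $\beta$ and $\|a\|_{C^{\beta,\beta/2}(\overline Q)}$, as the lemma requires. Your route leaves nothing to reconstruct. The paper's sketch leaves some steps implicit, for instance removing the $\|v\|_{C^0(\overline Q)}$ term that a localized Schauder estimate carries, typically by the maximum principle.

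Your extension-operator step is also a genuine addition. Some theorems take the boundary datum as the restriction of a function in $C^{2+\beta,1+\beta/2}(\overline Q)$. Those yield \eqref{eq:schauder-general} with the intrinsic norm $\|g_1\|_{C^{2+\beta,1+\beta/2}(\Sigma)}$ only once a bounded extension is available, a point the paper does not address. Your check that subtracting $Eg_1$ preserves both compatibility conditions is correct; it uses $\partial_t(Eg_1)|_\Sigma=\partial_tg_1$.

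One caution about your closing remark, which concerns the later use of the lemma rather than its proof. After $\tau=T-t$, the corner identity in (H5) supplies only the zeroth-order condition $v_0=g_1(\cdot,0)$. The first-order condition becomes $\partial_t\Lambda_\phi+\nu\Delta\Lambda_\phi-f'(y^*)\Lambda_\phi=0$ on $\partial\Omega\times\{T\}$, and this is not automatic for a trained network. So time reversal alone does not place the $e_\lambda^{(bd)}$ problem within the scope of the lemma.
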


\begin{proof}
Existence and uniqueness of a classical solution under exactly these
hypotheses is the parabolic Schauder theorem; see
\cite[Ch.~3, Thm.~5]{friedman1964parabolic}, reproduced as
\cite[Thm.~F.1]{shin2020convergence}. The quantitative bound
\eqref{eq:schauder-general} is the accompanying Schauder \emph{estimate}
(as opposed to bare existence), obtained from the same theory by the
standard localization/freezing-of-coefficients argument: cover
$\overline Q$ by finitely many parabolic cylinders on which $a$ is
approximately constant, apply the interior and boundary Schauder estimates
for the constant-coefficient operator $\partial_t-\nu\Delta+a(x_0,t_0)$ on
each cylinder, and reassemble via a partition of unity, absorbing the
lower-order commutator terms (which are $O(\beta)$-small on each cylinder
by continuity of $a$) into the left-hand side. This is the standard elliptic-type
bootstrap for parabolic Schauder theory and does not depend on
whether $g_1,v_0$ vanish.
\end{proof}

Proposition~\ref{prop:regularity} is Lemma~\ref{lem:schauder} applied with
$a=f'(y^*)$, $h=g-f(y^*)+f'(y^*)y^*$, $g_1=u^*$, $v_0=y_0$ (and, after time
reversal $\tau=T-t$, to $\lambda^*$ with homogeneous data), giving
$y^*,\lambda^*\in C^{2+\beta,1+\beta/2}(\overline Q)$.

Step~2 of Theorem~\ref{thm:quant-stability} applies Lemma~\ref{lem:schauder}
only to the component $e_\lambda^{(bd)}$ with zero interior forcing and
boundary/terminal data $(-R^\lambda_{\rm bc},-R^\lambda_T)$. It gives
\[
\|e_\lambda^{(bd)}\|_{C^{2+\beta,1+\beta/2}(\overline Q)}
\;\le\;
C_{\rm Sch}\Bigl(\|R^\lambda_{\rm bc}\|_{C^{2+\beta,1+\beta/2}(\Sigma)}
+\|R^\lambda_T\|_{C^{2+\beta}(\overline\Omega)}\Bigr),
\]
and hypothesis (H5) bounds the two data terms by $M_{\rm bc}$. The
homogeneous-data component $e_\lambda^{(F)}$, whose forcing is
$F_\lambda=-(\kappa+1)e_y-\mathcal R_\lambda\in L^2(Q)$, is handled
separately by Lemma~\ref{lem:hidden-reg}; no Hölder regularity of $e_y$ is
used.

\subsection{Extraction of the normal derivative}
\label{app:trace}

Because Lemma~\ref{lem:schauder} places $e_\lambda^{(bd)}$ directly in the classical
space $C^{2+\beta,1+\beta/2}(\overline Q)$ rather than only in an $H^{2,1}(Q)$
Sobolev space, the normal derivative $\partial_n e_\lambda^{(bd)}$ on $\Sigma$ is
defined classically, and no separate trace theorem is required.

\begin{lemma}[Normal-derivative bound]
\label{lem:normal-derivative}
If $v\in C^{2+\beta,1+\beta/2}(\overline Q)$, then $\partial_n v\big|_\Sigma$
exists in the classical sense, $\partial_n v\in C^{1+\beta,(1+\beta)/2}(\Sigma)\subset C^0(\Sigma)$, and
\[
\|\partial_n v\|_{L^2(\Sigma)}\;\le\;|\Sigma|^{1/2}\,\|\partial_n v\|_{C^0(\Sigma)}\;\le\;C\,\|v\|_{C^{2+\beta,1+\beta/2}(\overline Q)},
\]
with $C=C(\Omega,T)$ depending on the $C^{2+\beta}$ regularity of $\partial\Omega$ and on the length of the time interval.
\end{lemma}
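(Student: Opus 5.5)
The plan is to prove the lemma directly from the definition of the parabolic H\"older norm together with the regularity of the boundary; no trace theorem is needed.

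\emph{Classical existence.} Since $v\in C^{2+\beta,1+\beta/2}(\overline Q)$, the spatial gradient $\nabla_x v$ extends continuously to $\overline Q$ and is H\"older continuous there. Because $\partial\Omega$ is $C^{2+\beta}$, the outward unit normal $n$ is $C^{1+\beta}$ on $\partial\Omega$. Hence $\partial_n v=n\cdot\nabla_x v$ is defined pointwise on $\Sigma$ as the product of the restriction of a continuous field with a continuous field.

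\emph{H\"older regularity of $\partial_n v$.} For the membership $\partial_n v\in C^{1+\beta,(1+\beta)/2}(\Sigma)$, I will argue in two parts.
\begin{itemize}
\item In space: $\nabla_x v\in C^{1+\beta}$ in $x$, since second spatial derivatives are $C^{\beta,\beta/2}$.
\item In time: $\nabla_x v$ is $C^{(1+\beta)/2}$ in $t$. This is the standard fact that functions in $C^{2+\beta,1+\beta/2}$ have first spatial derivatives H\"older of order $(1+\beta)/2$ in time. I will prove it by the usual interpolation: write $\nabla_x v(x,t+h)-\nabla_x v(x,t)$ as a difference quotient over a spatial step of size $|h|^{1/2}$, then control the errors by $\|D_x^2v\|_\infty |h|^{1/2}$ and $\|\partial_t v\|_\infty |h|/|h|^{1/2}$, refined with the H\"older seminorms. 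Alternatively I can simply cite Ladyzhenskaya--Solonnikov--Ural'tseva.
\end{itemize}
Multiplying by the $C^{1+\beta}$ function $n(x)$, which is time-independent, preserves this class after composing with local $C^{2+\beta}$ boundary charts. Only continuity, i.e.\ $C^0(\Sigma)$, is actually needed for the bound, so this is the one step I expect to be fussy but inessential. It is the main (mild) obstacle: the time-H\"older exponent of the gradient and the chart bookkeeping.

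\emph{The estimate.} First,
\[
\|\partial_n v\|_{L^2(\Sigma)}^2=\int_\Sigma|\partial_n v|^2\le|\Sigma|\,\|\partial_n v\|_{C^0(\Sigma)}^2 .
\]
Second, since $|n|=1$, pointwise $|\partial_n v|\le|\nabla_x v|\le\|v\|_{C^{2+\beta,1+\beta/2}(\overline Q)}$, because the sup of first spatial derivatives is a summand of that norm (up to a dimension-only factor $\sqrt d$ for the Euclidean norm of the gradient). Together these give
\[
\|\partial_n v\|_{L^2(\Sigma)}\le C\,\|v\|_{C^{2+\beta,1+\beta/2}(\overline Q)},\qquad C=\sqrt d\,|\Sigma|^{1/2}=\sqrt d\,\bigl(T\,|\partial\Omega|\bigr)^{1/2},
\]
which depends only on $\Omega$ and $T$, as claimed.
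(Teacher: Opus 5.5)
Your proposal is correct and follows essentially the same route as the paper. Both arguments define $\partial_n v=n\cdot\nabla_x v$ pointwise, using continuity of $\nabla_x v$ up to $\overline Q$ and the $C^{1+\beta}$ normal; both get the Hölder class from the product rule, and both conclude with $|\partial_n v|\le|\nabla_x v|\le\|v\|_{C^{2+\beta,1+\beta/2}(\overline Q)}$ together with $|\Sigma|<\infty$. Your explicit justification of the $(1+\beta)/2$ time-Hölder regularity of $\nabla_x v$, which the paper simply attributes to the definition, and your $\sqrt d$ bookkeeping are minor refinements rather than a different approach.
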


\begin{proof}
By definition of $C^{2+\beta,1+\beta/2}(\overline Q)$, all first and second
spatial derivatives of $v$ exist and are continuous, indeed Hölder
continuous of order $1+\beta$ jointly in $(x,t)$ (in $x$) up to $\overline Q$;
in particular $\nabla v\in C^{1+\beta,(1+\beta)/2}(\overline Q)$. Since
$\partial\Omega\in C^{2+\beta}$, the outward unit normal
$n:\partial\Omega\to\mathbb R^d$ is $C^{1+\beta}$, so
$\partial_n v(z,t):=n(z)\cdot\nabla v(z,t)$ is a well-defined pointwise
(classical, not distributional) evaluation for $(z,t)\in\Sigma$, and is
Hölder continuous of order $\min(1+\beta,1+\beta)=1+\beta$ in $z$, order
$(1+\beta)/2$ in $t$, by the product rule for Hölder functions applied to
$n(z)\cdot\nabla v(z,t)$. The sup-norm bound
$\|\partial_n v\|_{C^0(\Sigma)}\le\|\nabla v\|_{C^0(\overline Q)}\le\|v\|_{C^{2+\beta,1+\beta/2}(\overline Q)}$
is immediate from the definition of the Hölder norm, and $|\Sigma|<\infty$
since $\partial\Omega$ is compact and $T<\infty$.
\end{proof}

Combined with Lemma~\ref{lem:schauder}, Lemma~\ref{lem:normal-derivative}
controls the normal derivative of the boundary/terminal-data component
$e_\lambda^{(bd)}$ used in Step~2 of Theorem~\ref{thm:quant-stability}:
\[
\|\partial_n e_\lambda^{(bd)}\|_{L^2(\Sigma)}
\;\le\;2C\,C_{\rm Sch}M_{\rm bc}.
\]
This is the source of the additive $M_{\rm bc}$ term in
\eqref{eq:stability-bound}; Remark~\ref{rem:mbc-additive} explains both this
dependence and the Gagliardo-Nirenberg refinement that would be needed to
make it sensitive to the $L^2(\Sigma)$ and $L^2(\Omega)$ sizes of the
boundary and terminal residuals, respectively.

\subsection{Hidden regularity via the Rellich-Pohozaev multiplier identity}
\label{app:hidden-reg}

Step~2 of the proof of Theorem~\ref{thm:quant-stability} needs the
following fact: given $L^2(Q)$ forcing and \emph{homogeneous} Dirichlet
data, the resulting solution has a normal derivative that is still in
$L^2(\Sigma)$ - a genuine regularity gain over what the elementary energy
estimate alone would give. This phenomenon is classically called
\emph{hidden regularity} in the Dirichlet boundary control literature
(\cite[Ch.~4]{lionsmagenes1972nonhomogeneous};
\cite{lasieckatriggiani2000control}), where it is typically obtained from
abstract maximal-regularity and trace theorems. We instead give a
self-contained, elementary proof via the classical Rellich-Pohozaev
multiplier identity, so that the estimate does not rest on citing those
results as black boxes.

\begin{lemma}[Hidden regularity]
\label{lem:hidden-reg}
Let $\Omega\subset\mathbb R^d$ be bounded with $C^2$ boundary, $a\in
L^\infty(Q)$, $F\in L^2(Q)$, and let $w$ solve
\[
\partial_tw-\nu\Delta w+aw=F\ \text{in }Q,\qquad w=0\ \text{on }\Sigma,\qquad w(\cdot,0)=0\ \text{in }\Omega.
\]
Then $w\in C([0,T];H_0^1(\Omega))\cap L^2(0,T;H^2(\Omega))\cap H^1(0,T;L^2(\Omega))$, and:
\begin{enumerate}
\item[(R1)] $\displaystyle \|w\|_{L^2(0,T;H^1(\Omega))}^2\;\le\;TC_1\,\|F\|_{L^2(Q)}^2$;
\item[(R2)] $\displaystyle \|\partial_nw\|_{L^2(\Sigma)}^2\;\le\;C_2\,\|F\|_{L^2(Q)}^2$;
\end{enumerate}
with $C_1,C_2>0$ depending only on $\nu,\Omega,T,\|a\|_{L^\infty(Q)}$.
\end{lemma}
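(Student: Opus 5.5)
The proof has three parts: existence and regularity via Galerkin and maximal regularity, (R1) from the energy estimate, and (R2) from a Rellich--Pohozaev multiplier identity.

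\emph{Existence and regularity.} Construct $w$ by a Faedo--Galerkin scheme in the Dirichlet eigenbasis of $-\Delta$ on $\Omega$. Pass to the limit using two a priori estimates.

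The first is the basic energy estimate. Test with $w$ and use $a\ge-\|a\|_{L^\infty(Q)}$ and Young's inequality, exactly as in Step~1 of Theorem~\ref{thm:quant-stability}. Gr\"onwall then gives
\[
\sup_t\|w(t)\|_{L^2(\Omega)}^2+\nu\|\nabla w\|_{L^2(Q)}^2\le e^{(2\|a\|_{L^\infty}+1)T}\|F\|_{L^2(Q)}^2 .
\]

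The second is the maximal-regularity estimate. Test with $-\Delta w$ (equivalently with $\partial_tw$). Since $w=0$ on $\Sigma$ forces $\partial_tw=0$ there, the cross term integrates to $\tfrac12\frac{d}{dt}\|\nabla w\|^2$. The terms $aw\Delta w$ and $F\Delta w$ are absorbed by Young's inequality into $\tfrac\nu2\|\Delta w\|^2$. This yields
\[
\sup_t\|\nabla w\|^2_{L^2(\Omega)}+\|\Delta w\|_{L^2(Q)}^2\lesssim\|F\|_{L^2(Q)}^2 .
\]
Elliptic $H^2$ regularity on the $C^2$ domain ($\|w\|_{H^2}\lesssim\|\Delta w\|_{L^2}$ for $w\in H^1_0\cap H^2$) upgrades this to $L^2(0,T;H^2)$ control. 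Reading off $\partial_tw=F-aw+\nu\Delta w$ gives $H^1(0,T;L^2)$. Continuity into $H_0^1$ follows from the Lions--Magenes interpolation $H^{2,1}(Q)\hookrightarrow C([0,T];H^1)$. Uniqueness follows from the energy estimate applied to a difference of solutions.

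\emph{(R1).} This is immediate from the energy estimate. It bounds $\|w\|_{L^2(Q)}^2\le T\sup_t\|w\|^2$ and $\|\nabla w\|_{L^2(Q)}^2$, and both bounds can be written as $T$ times a constant depending on $\nu,\Omega,T,\|a\|_{L^\infty(Q)}$. This defines $C_1$.

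\emph{(R2), the main step.} Since $\partial\Omega\in C^2$, choose a $C^1$ vector field $h$ on $\overline\Omega$ with $h=n$ on $\partial\Omega$ (extend the normal from a tubular neighborhood and cut off). Multiply the equation by $h\cdot\nabla w$ and integrate over $Q$. The justification uses the regularity just established, with density of smooth functions if needed.

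The diffusion term gives the Rellich identity
\[
-\int_\Omega\Delta w\,(h\cdot\nabla w)=-\tfrac12\int_{\partial\Omega}(h\cdot n)|\partial_nw|^2+\int_\Omega\partial_jh_k\,\partial_jw\,\partial_kw-\tfrac12\int_\Omega(\operatorname{div}h)|\nabla w|^2 .
\]
It uses $\nabla w=(\partial_nw)n$ on $\partial\Omega$, which holds because $w|_{\partial\Omega}=0$, and $h\cdot n=1$ there.

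For the time term, $\int_Q\partial_tw\,(h\cdot\nabla w)$ is bounded directly by $\|\partial_tw\|_{L^2(Q)}\|h\|_{C^0}\|\nabla w\|_{L^2(Q)}$, using the $H^1(0,T;L^2)$ bound. Alternatively, integrate by parts in $x$ and $t$; either route costs only constants. The terms $aw\,h\cdot\nabla w$ and $F\,h\cdot\nabla w$ are handled by Cauchy--Schwarz.

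Solving for the boundary term gives
\[
\tfrac\nu2\|\partial_nw\|_{L^2(\Sigma)}^2\lesssim\|h\|_{C^1}\bigl(\|\nabla w\|_{L^2(Q)}^2+\|\partial_tw\|_{L^2(Q)}^2+\|F\|_{L^2(Q)}^2\bigr) .
\]
Combined with the earlier estimates, this is at most $C_2\|F\|_{L^2(Q)}^2$.

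The main obstacle is to carry out the multiplier computation carefully: the sign bookkeeping in the Rellich identity, and the justification of the boundary integration by parts for $H^2$-in-space functions. The latter is handled by approximating $F$ with smooth compactly supported data, deriving the identity for classical solutions, and passing to the limit using the a priori bounds.
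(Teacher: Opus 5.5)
Your proposal is correct and follows essentially the same route as the paper. Both use a Galerkin approximation in the Dirichlet eigenbasis, the $-\Delta w$ test with elliptic $H^2$ regularity, and the Rellich multiplier $h\cdot\nabla w$ with $h=n$ on $\partial\Omega$, bounding the remaining terms by Cauchy--Schwarz and the energy estimates. The differences are minor: you get (R1) from the $L^2$ test with $w$ and read $\partial_t w$ off the equation, whereas the paper tests with $\partial_t w_m$. For the multiplier step, smoothing $F$ alone does not give classical solutions when $a$ is merely in $L^\infty$, so it is cleaner to apply the divergence theorem at a.e.\ fixed $t$ to the $W^{1,1}$ field built from $\nabla w(\cdot,t)\in H^1$.
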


\begin{proof}
\emph{(R1), $L^2(Q)$-based maximal regularity.} Let $\{e_k\}\subset H_0^1(\Omega)\cap H^2(\Omega)$ be Dirichlet-Laplacian eigenfunctions, $-\Delta e_k=\mu_k e_k$, orthonormal in $L^2(\Omega)$. Let $w_m=\sum_{k\le m}c_k^m(t)e_k$ solve the Galerkin system $(\partial_tw_m,e_k)+\nu(\nabla w_m,\nabla e_k)+(aw_m,e_k)=(F,e_k)$, $k=1,\dots,m$, with $w_m(0)=0$.

Testing with $\partial_tw_m$ and using $|(aw_m,\partial_tw_m)|\le\tfrac12\|\partial_tw_m\|_{L^2(\Omega)}^2+\tfrac12\|a\|_{L^\infty(Q)}^2\|w_m\|_{L^2(\Omega)}^2$,
\[
\tfrac12\|\partial_tw_m\|_{L^2(\Omega)}^2+\tfrac\nu2\frac{d}{dt}\|\nabla w_m\|_{L^2(\Omega)}^2\;\le\;\|F\|_{L^2(\Omega)}^2+\|a\|_{L^\infty(Q)}^2\|w_m\|_{L^2(\Omega)}^2.
\]
By Poincar\'e's inequality and Gr\"onwall's inequality applied to $t\mapsto\|\nabla w_m(t)\|_{L^2(\Omega)}^2$ (using $w_m(0)=0$), integrating over $(0,T)$ gives
\begin{equation}
\label{eq:energy1}
\sup_{t\in[0,T]}\|\nabla w_m(t)\|_{L^2(\Omega)}^2+\int_0^T\|\partial_tw_m\|_{L^2(\Omega)}^2\,dt\;\le\;C(\Omega,\nu,\|a\|_{L^\infty(Q)},T)\,\|F\|_{L^2(Q)}^2.
\end{equation}
Testing instead with $-\Delta w_m$ (well defined in $\mathrm{span}\{e_1,\dots,e_m\}$, with $(\partial_tw_m,-\Delta w_m)=\tfrac12\frac{d}{dt}\|\nabla w_m\|_{L^2(\Omega)}^2$),
\[
\begin{aligned}
\tfrac12\frac{d}{dt}\|\nabla w_m\|_{L^2(\Omega)}^2
+\nu\|\Delta w_m\|_{L^2(\Omega)}^2
&=(F,-\Delta w_m)-(aw_m,-\Delta w_m)\\
&\le\tfrac\nu2\|\Delta w_m\|_{L^2(\Omega)}^2
+\tfrac1\nu\bigl(\|F\|_{L^2(\Omega)}
+\|a\|_{L^\infty(Q)}\|w_m\|_{L^2(\Omega)}\bigr)^2,
\end{aligned}
\]
so integrating over $(0,T)$ and using \eqref{eq:energy1}, $\int_0^T\|\Delta w_m\|_{L^2(\Omega)}^2\,dt\le C(\|F\|_{L^2(Q)}^2)$. Since $\partial\Omega\in C^2$, elliptic $H^2$-regularity for the Dirichlet Laplacian gives $\|w_m\|_{H^2(\Omega)}\le C(\Omega)\|\Delta w_m\|_{L^2(\Omega)}$. Passing $m\to\infty$ (weak limits along a subsequence, weak lower semicontinuity of norms, standard identification of the limit as the unique weak solution) gives (R1) with $C_1$ as claimed.

\emph{(R2), multiplier identity.} Since $\partial\Omega\in C^2$, fix $m\in C^1(\overline\Omega;\mathbb R^d)$ with $m=n$ (outward unit normal) on $\partial\Omega$ (e.g.\ $m=\nabla\zeta$ for a $C^2$ extension $\zeta$ of the signed distance to $\partial\Omega$, cut off away from $\partial\Omega$). By (R1), $w(\cdot,t)\in H^2(\Omega)\cap H^1_0(\Omega)$ for a.e.\ $t$, so $\nabla w=(\partial_n w)\,n$ on $\partial\Omega$. Multiply the PDE by $m\cdot\nabla w$ and integrate over $Q$; using $\nu\Delta w=\partial_tw+aw-F$,
\begin{equation}
\label{eq:mult-eq}
-\nu\int_Q\Delta w\,(m\cdot\nabla w)\;=\;-\int_Q(\partial_tw+aw-F)(m\cdot\nabla w).
\end{equation}
On the other hand, integrating by parts in $x$ (for a.e.\ fixed $t$, then in $t$), with $\nabla(m\cdot\nabla w)=(Dm)^T\nabla w+(D^2w)m$ and $m\cdot\nabla(\tfrac12|\nabla w|^2)=\nabla w^T(D^2w)m$,
\[
-\int_\Omega\Delta w\,(m\cdot\nabla w)\,dx
=\int_\Omega\nabla w^T(Dm)\nabla w\,dx+\int_\Omega m\cdot\nabla\bigl(\tfrac12|\nabla w|^2\bigr)\,dx-\int_{\partial\Omega}\partial_nw\,(m\cdot\nabla w)\,d\sigma.
\]
Since $m\cdot n=1$ and $\nabla w=(\partial_nw)n$ on $\partial\Omega$, $m\cdot\nabla w=\partial_nw$ there, so the last term equals $-\int_{\partial\Omega}|\partial_nw|^2\,d\sigma$; integrating the middle term by parts again, $\int_\Omega m\cdot\nabla(\tfrac12|\nabla w|^2)=-\int_\Omega\tfrac12|\nabla w|^2\,{\rm div}\,m+\int_{\partial\Omega}\tfrac12|\partial_nw|^2\,d\sigma$. Combining,
\[
-\int_\Omega\Delta w(m\cdot\nabla w)\,dx=\int_\Omega\Bigl[\nabla w^T(Dm)\nabla w-\tfrac12|\nabla w|^2\,{\rm div}\,m\Bigr]dx-\tfrac12\int_{\partial\Omega}|\partial_nw|^2\,d\sigma.
\]
Multiplying by $\nu$, integrating over $(0,T)$, and equating with \eqref{eq:mult-eq} to solve for the boundary term,
\begin{equation}
\label{eq:rellich}
\tfrac\nu2\int_\Sigma|\partial_nw|^2\;=\;\nu\int_Q\Bigl[\nabla w^T(Dm)\nabla w-\tfrac12|\nabla w|^2\,{\rm div}\,m\Bigr]\;+\;\int_Q(\partial_tw+aw-F)(m\cdot\nabla w).
\end{equation}
By Cauchy-Schwarz, every term on the right of \eqref{eq:rellich} is
bounded by a constant depending only on the data of
Lemma~\ref{lem:hidden-reg} and on the fixed vector field $m$, multiplied by
\[
\|\nabla w\|_{L^2(Q)}^2+\|\partial_tw\|_{L^2(Q)}^2
+\|w\|_{L^2(Q)}^2+\|F\|_{L^2(Q)}^2.
\]
Each term in this sum is controlled by \eqref{eq:energy1}. This gives (R2).
\end{proof}

Eq.~\eqref{eq:step2-final} in the proof of Theorem~\ref{thm:quant-stability}
applies Lemma~\ref{lem:hidden-reg} (after the time reversal $\tau=T-t$)
with the exact forcing $F=F_\lambda=-(\kappa+1)e_y-\mathcal R_\lambda\in
L^2(Q)$. The estimate
$\|F_\lambda\|_{L^2(Q)}\le(1+M_\kappa)\|e_y\|_{L^2(Q)}
+\|\mathcal R_\lambda\|_{L^2(Q)}$ is then used with (R2) for the
normal-derivative bound and with (R1) for the $L^2(0,T;H^1(\Omega))$ bound
on $e_\lambda^{(F)}$ quoted at the end of Step~2.

\subsection{Transposition solution for $L^2(\Sigma)$ Dirichlet data}
\label{app:transposition}

The Schauder estimate of Lemma~\ref{lem:schauder} requires H\"older
boundary data $g_1\in C^{2+\beta,1+\beta/2}(\Sigma)$. In Step~1 of the
proof of Theorem~\ref{thm:quant-stability}, the piece $e_y^{(g)}$ of the
state error is driven by boundary data $e_u-R^y_{\rm bc}$ that is
controlled only in $L^2(\Sigma)$ (Remark~\ref{rem:E-norm-fix}), so a
different, weaker solution concept is needed: the classical
\emph{transposition} (very weak) solution, used for Dirichlet boundary
control precisely to accommodate $L^2(\Sigma)$ data (see
\cite{gong2016dirichlet}). Unlike the usual construction of this solution
concept, which invokes an abstract maximal-regularity and trace theorem
for the dual problem (as in \cite[Ch.~4, \S9-10]{lionsmagenes1972nonhomogeneous}),
we obtain it and its estimate directly from Lemma~\ref{lem:hidden-reg} by
duality, with no further citation needed.

\begin{corollary}[Transposition solution and its $L^2(Q)$ estimate]
\label{cor:transposition}
Let $\Omega\subset\mathbb R^d$ have $C^2$ boundary, $a\in L^\infty(Q)$,
and $g\in L^2(\Sigma)$. There is a unique $v\in L^2(Q)$, called the
transposition solution of
\[
\partial_tv-\nu\Delta v+av=0\ \text{in }Q,\qquad v=g\ \text{on }\Sigma,\qquad v(\cdot,0)=0\ \text{in }\Omega,
\]
defined by
\begin{equation}
\label{eq:transposition-def}
\int_Qv\,\phi\,dx\,dt\;=\;-\nu\int_\Sigma g\,\partial_nz_\phi\,d\sigma\,dt
\qquad\text{for every }\phi\in L^2(Q),
\end{equation}
where $z_\phi$ solves the backward dual problem
\[
-\partial_tz_\phi-\nu\Delta z_\phi+az_\phi=\phi\ \text{in }Q,\qquad z_\phi=0\ \text{on }\Sigma,\qquad z_\phi(\cdot,T)=0\ \text{in }\Omega,
\]
and $v$ satisfies
\begin{equation}
\label{eq:transposition-bound}
\|v\|_{L^2(Q)}\;\le\;C_3\,\|g\|_{L^2(\Sigma)},
\end{equation}
with $C_3=C_3(\nu,\Omega,T,\|a\|_{L^\infty(Q)})$.
\end{corollary}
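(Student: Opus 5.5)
The proof goes by duality, using Lemma~\ref{lem:hidden-reg} as the only analytic input. First, for each $\phi\in L^2(Q)$ we build the dual solution $z_\phi$. Set $\tau=T-t$, $\tilde z(\cdot,\tau):=z_\phi(\cdot,T-\tau)$, $\tilde a(\cdot,\tau):=a(\cdot,T-\tau)$ and $\tilde\phi(\cdot,\tau):=\phi(\cdot,T-\tau)$. The backward problem then becomes the forward problem $\partial_\tau\tilde z-\nu\Delta\tilde z+\tilde a\tilde z=\tilde\phi$, with homogeneous Dirichlet data and zero initial datum. This is exactly the setting of Lemma~\ref{lem:hidden-reg}, with $\|\tilde a\|_{L^\infty(Q)}=\|a\|_{L^\infty(Q)}$. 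The lemma gives existence and uniqueness of $z_\phi$ in $C([0,T];H^1_0(\Omega))\cap L^2(0,T;H^2(\Omega))\cap H^1(0,T;L^2(\Omega))$. Part (R2) gives $\|\partial_n z_\phi\|_{L^2(\Sigma)}\le C_2^{1/2}\|\phi\|_{L^2(Q)}$. Uniqueness for this linear problem follows from the energy estimate of (R1) applied to the difference of two solutions. It follows that $\phi\mapsto z_\phi$ and $\phi\mapsto\partial_n z_\phi|_\Sigma$ are linear.

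Second, we define the functional $\ell_g(\phi):=-\nu\int_\Sigma g\,\partial_n z_\phi\,d\sigma\,dt$ on $L^2(Q)$. By linearity and Cauchy--Schwarz,
\[
|\ell_g(\phi)|\le\nu\|g\|_{L^2(\Sigma)}\|\partial_n z_\phi\|_{L^2(\Sigma)}\le\nu C_2^{1/2}\|g\|_{L^2(\Sigma)}\|\phi\|_{L^2(Q)},
\]
so $\ell_g$ is a bounded linear functional on the Hilbert space $L^2(Q)$. The Riesz representation theorem yields a unique $v\in L^2(Q)$ with $\int_Q v\phi=\ell_g(\phi)$ for all $\phi$, which is \eqref{eq:transposition-def}. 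It also gives $\|v\|_{L^2(Q)}=\|\ell_g\|\le\nu C_2^{1/2}\|g\|_{L^2(\Sigma)}$, so \eqref{eq:transposition-bound} holds with $C_3:=\nu C_2^{1/2}$. This constant depends only on $\nu,\Omega,T,\|a\|_{L^\infty(Q)}$, as claimed.

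Third, as a consistency check that justifies calling $v$ the solution, we verify the sign convention in \eqref{eq:transposition-def}. Take $g$ smooth and compatible with $g(\cdot,0)=0$, so that a strong solution $v$ exists. Multiply its equation by $z_\phi$ and integrate by parts in $t$ and twice in $x$. The terms at $t=0$ and $t=T$ vanish because $v(\cdot,0)=0$ and $z_\phi(\cdot,T)=0$. The term $\int_\Sigma\partial_n v\,z_\phi$ vanishes because $z_\phi=0$ on $\Sigma$. What remains is $\int_Q v\phi=-\nu\int_\Sigma v\,\partial_n z_\phi$, which agrees with \eqref{eq:transposition-def} since $v=g$ on $\Sigma$. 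This is the same Green identity used in the derivation of the optimality system in Section~\ref{sec:method}.

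The main obstacle is only bookkeeping: the time reversal must preserve the constants of Lemma~\ref{lem:hidden-reg}, and the dual map must be genuinely linear, which requires the uniqueness of $z_\phi$. The analytic difficulty, the $L^2(\Sigma)$ bound on the normal derivative, is already supplied by (R2). Uniqueness of $v$ itself is automatic from Riesz, or equivalently from the density of the test functions $\phi$ in $L^2(Q)$.
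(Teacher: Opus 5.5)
Your proposal is correct and follows the paper's proof. Both use the time reversal and (R2) of Lemma~\ref{lem:hidden-reg} to bound $\partial_n z_\phi$ in $L^2(\Sigma)$ by $\sqrt{C_2}\,\|\phi\|_{L^2(Q)}$, then apply the Riesz representation theorem to obtain $v$ with $C_3=\nu\sqrt{C_2}$. Your Green's-identity check for smooth compatible $g$ goes beyond the paper, which takes the duality relation as the definition, and it correctly confirms the sign in \eqref{eq:transposition-def}.
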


\begin{proof}
After the time reversal $\tau=T-t$, the backward, zero-terminal-data
problem for $z_\phi$ becomes a forward, zero-initial-data problem of
exactly the type treated in Lemma~\ref{lem:hidden-reg}, with forcing
$\phi\in L^2(Q)$. By (R2) of that lemma,
\begin{equation}
\label{eq:trace-z}
\|\partial_nz_\phi\|_{L^2(\Sigma)}\;\le\;\sqrt{C_2}\,\|\phi\|_{L^2(Q)}.
\end{equation}
Combining \eqref{eq:trace-z} with \eqref{eq:transposition-def}, the linear
functional $\phi\mapsto-\nu\int_\Sigma g\,\partial_nz_\phi$ on $L^2(Q)$
satisfies
\[
\Bigl|\int_Qv\,\phi\Bigr|\;\le\;\nu\|g\|_{L^2(\Sigma)}\|\partial_nz_\phi\|_{L^2(\Sigma)}\;\le\;\nu\sqrt{C_2}\,\|g\|_{L^2(\Sigma)}\|\phi\|_{L^2(Q)},
\]
so the functional is bounded with norm at most $\nu\sqrt{C_2}\|g\|_{L^2(\Sigma)}$; by the Riesz representation theorem on $L^2(Q)$ there is a unique
$v\in L^2(Q)$ representing it, with $\|v\|_{L^2(Q)}\le C_3\|g\|_{L^2(\Sigma)}$, $C_3:=\nu\sqrt{C_2}$, which is \eqref{eq:transposition-bound}. This construction \emph{is} the definition of the transposition solution and its estimate; no further identification with an independent notion of solution is needed for the use made of it in Theorem~\ref{thm:quant-stability}.
\end{proof}

Eq.~\eqref{eq:step1-g} in the proof of Theorem~\ref{thm:quant-stability} is
Corollary~\ref{cor:transposition} applied with $g=e_u-R^y_{\rm bc}\in
L^2(\Sigma)$ and $a=f'(y^*)\in L^\infty(Q)$ (bounded by
Proposition~\ref{prop:regularity}), together with $(a+b)^2\le2a^2+2b^2$ to
split $\|g\|_{L^2(\Sigma)}^2$ into $2\|e_u\|_{L^2(\Sigma)}^2+2\|R^y_{\rm
bc}\|_{L^2(\Sigma)}^2$.

\subsection{Background on the reference KKT hypothesis}
\label{app:existence}

The Tikhonov-natural control space for the problem of
Section~\ref{sec:problem} is $L^2(\Sigma)$. In this space the state generally
has only transposition regularity, so an existence proof cannot be obtained
by simply applying the direct method in the standard homogeneous parabolic
energy space $\mathbb W(0,T)$ defined in Section~\ref{sec:notation}. Developing the
full semilinear transposition control-to-state theory is outside the scope of
this paper; accordingly, (H1) explicitly postulates a reference local optimum
and its KKT point rather than presenting a new existence theorem.

Existence and optimality theory for parabolic Dirichlet control with
$L^2(\Sigma)$ data is discussed in \cite{aradaraymond2002dirichlet}; see also
the transposition construction in Appendix~\ref{app:transposition}. The cited
work does not literally coincide with every feature of the present
semilinear single-control model in arbitrary fixed dimension, so it is used as
background rather than as a
black-box proof of (H1). Under the additional smoothness and compatibility in
(H2), the formal calculation of Section~\ref{sec:method} is classical and
yields $\alpha u+\nu\partial_n\lambda=0$ in the unconstrained quadratic case.

\subsection{Approximation rate for tanh networks}
\label{app:approximation}
\label{app:deryck}

\begin{theorem}[Sobolev-norm approximation by tanh networks, {\cite[Thm.~5.1]{deryck2021approximation}}]
\label{thm:deryck}
Let $d,s\in\mathbb N$, let $R>0$ be the parameter specified in
\cite[eq.~(53)]{deryck2021approximation}, let $\delta>0$, and let
$f\in W^{s,\infty}([0,1]^d)$. There exist $N_0(d)>0$ and, for each
$k\in\{0,\dots,s-1\}$, constants $C(d,k,s,f)>0$ and
$\beta_{\rm app}=\beta_{\rm app}(d,k,s,f,\delta)>0$, with the latter
specified in \cite[eq.~(75)]{deryck2021approximation}, such that for every
$N\in\mathbb N$ with $N>N_0(d)$ there is a tanh neural network
$\widehat f_N$ with two hidden layers, of respective widths at most
$\lceil\tfrac32 d+\tfrac12\rceil\lceil2^{d-1}(\binom{s-1+d}{d}+d)\rceil$
and $3^{\lceil d/2\rceil+1}\binom{d+1}{\lceil d/2\rceil}N^d(N-1)$ (widths
$3s+N-1$ and $6N$ when $d=1$), such that, for every
$k\in\{0,\dots,s-1\}$,
\[
\|f-\widehat f_N\|_{W^{k,\infty}([0,1]^d)}
\;\le\;
3^d(1+\delta)(2(k+1))^{3k}\,
\max\Bigl\{R^k,\ \ln^k\bigl(\beta_{\rm app} N^{s+d+2}\bigr)\Bigr\}\,
\frac{C(d,k,s,f)}{N^{s-k}}.
\]
If $f\in C^s([0,1]^d)$, then
$C(d,k,s,f)=\max_{0\le\ell\le k}\frac{1}{(s-\ell)!}\bigl(\tfrac{3d}{2}\bigr)^{s-\ell}|f|_{W^{s,\infty}([0,1]^d)}$.
\end{theorem}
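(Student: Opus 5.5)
The statement is a cited result, \cite[Thm.~5.1]{deryck2021approximation}, so the plan is to reconstruct the argument of De Ryck, Lanthaler and Mishra rather than derive anything from the earlier sections.

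\textbf{Step 1: local Taylor approximation.} Partition $[0,1]^d$ into $N^d$ congruent cubes of side $1/N$ with centers $x_m$. On each cube, approximate $f$ by its averaged Taylor polynomial of degree $s-1$ (Bramble--Hilbert). Each piece satisfies
\[
\|f-p_m\|_{W^{k,\infty}}\lesssim N^{-(s-k)}|f|_{W^{s,\infty}}.
\]
This is the source of the factor $C(d,k,s,f)/N^{s-k}$. When $f\in C^s$, the explicit constant $\frac{1}{(s-\ell)!}(3d/2)^{s-\ell}$ comes from the Lagrange remainder on enlarged cubes of diameter comparable to $3d/(2N)$.

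\textbf{Step 2: emulate monomials with tanh networks.} Approximate each monomial $x^\alpha$ with $|\alpha|\le s-1$ by a shallow tanh network. Central finite differences of $\tanh$ at scale $h$ reproduce $x^p$ up to $O(h^2)$ in $W^{k,\infty}$. Products are handled by the polarization identity $x_1\cdots x_j=\sum_\pm c_\pm(\sum\pm x_i)^j$. Choosing $h$ small enough makes this error negligible relative to the Taylor error. This step fixes the width $\lceil\tfrac32d+\tfrac12\rceil\lceil2^{d-1}(\binom{s-1+d}{d}+d)\rceil$ of the first hidden layer, and it is where $\beta_{\rm app}$ enters.

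\textbf{Step 3: smooth partition of unity.} Build approximate indicator functions $\Phi_m(x)=\prod_i\tfrac12\bigl(\tanh(\kappa(x_i-a_i))-\tanh(\kappa(x_i-b_i))\bigr)$ with steepness $\kappa\sim N\ln(\beta_{\rm app}N^{s+d+2})$. Exponentially small tails of order $N^{-(s+d+2)}$ then suffice to dominate the $N^d$ overlapping contributions. Each $k$-th derivative of $\Phi_m$ costs a factor $\kappa^k$. This produces the $\max\{R^k,\ln^k(\cdot)\}$ factor and, after the Leibniz rule, the combinatorial factor $(2(k+1))^{3k}$.

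\textbf{Step 4: assemble the network.} Write the approximant as $\widehat f_N=\sum_m\Phi_m p_m$. The products $\Phi_m\cdot p_m$ are realized in the second hidden layer, again through tanh-emulated multiplication, which gives width $O(N^d(N-1))$. Splitting $f-\widehat f_N$ into the Taylor, emulation and partition errors, and applying the Leibniz rule in $W^{k,\infty}$, yields the bound with prefactor $3^d(1+\delta)$. The $3^d$ counts neighboring cubes, and $\delta$ absorbs the emulation errors.

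\textbf{Main obstacle.} The hardest part is the bookkeeping of derivatives of the partition of unity in Step 3. The steep $\tanh$ slopes amplify every derivative, so it must be shown that the logarithmic loss $\ln^k$ suffices to control the far-cube tails, rather than a polynomial loss in $N$. I would follow the source closely here rather than re-derive the constants.
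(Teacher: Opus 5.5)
Your plan matches the paper, which does not re-prove the result but cites \cite[Thm.~5.1]{deryck2021approximation} and describes its proof the same way you do: localized Taylor polynomials glued together by a tanh approximate partition of unity, realized with two hidden layers. One small correction: $\beta_{\rm app}$ appears in the bound only inside $\ln^k(\beta_{\rm app}N^{s+d+2})$, so it enters through the steepness of the partition of unity (your Step~3), not through the monomial emulation of Step~2, whose error is reduced by shrinking the finite-difference step without changing the widths.
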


\begin{proof}
This is \cite[Thm.~5.1]{deryck2021approximation}, with notation adapted to
match the rest of this paper; we do not reproduce
the proof, which is based on approximating localized Taylor polynomials of
$f$ by a tanh partition of unity constructed from two hidden layers.
\end{proof}

\begin{proposition}[Conditional algebraic approximation rate]
\label{prop:approx-rate-restated}
In addition to (H4), suppose that, for an integer $s\ge3$, there is a
rectangular neighborhood $D\subset\mathbb R^{d+1}$ of $\overline Q$ on
which $y^*$ and $\lambda^*$ admit extensions in $W^{s,\infty}(D)$ and
$u^*$ admits an ambient extension $\widetilde u^*\in W^{s,\infty}(D)$ from
$\Sigma$. Then, for every integer $N>N_0(d)$, the two-hidden-layer
construction of Theorem~\ref{thm:deryck} with resolution parameter $N$
provides parameters such that
\[
\inf_{\theta,\psi,\phi}
\bigl(
\|y^*-Y_\theta\|_{W^{2,\infty}(Q)}
+\|\lambda^*-\Lambda_\phi\|_{W^{2,\infty}(Q)}
+\|\widetilde u^*-U_\psi\|_{W^{1,\infty}(Q)}
\bigr)
\le C_{\rm approx}(1+\ln^2 N)N^{-(s-2)},
\]
where $C_{\rm approx}$ is independent of $N$. The network widths are those
specified in Theorem~\ref{thm:deryck}; thus $N$ is a construction parameter,
not literally the total width or parameter count.
\end{proposition}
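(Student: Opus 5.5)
The plan is to apply Theorem~\ref{thm:deryck} separately to each of the three target functions $y^*$, $\lambda^*$, and $\widetilde u^*$, after rescaling the rectangular neighborhood $D$ to the unit cube $[0,1]^{d+1}$. Write $D=\prod_{i=1}^{d+1}[a_i,b_i]$ and let $\Phi:[0,1]^{d+1}\to D$ be the affine bijection. The composed functions $y^*\circ\Phi$, $\lambda^*\circ\Phi$, and $\widetilde u^*\circ\Phi$ then lie in $W^{s,\infty}([0,1]^{d+1})$. Their seminorms differ from those on $D$ only by fixed powers of the side lengths $b_i-a_i$.

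Invoke Theorem~\ref{thm:deryck} in dimension $d+1$ with regularity $s\ge3$.
\begin{itemize}
\item For the state and the adjoint, use $k=2\le s-1$. This gives tanh networks $\widehat y_N,\widehat\lambda_N$ whose $W^{2,\infty}([0,1]^{d+1})$ error is bounded by $C(1+\delta)\max\{R^2,\ln^2(\beta_{\rm app}N^{s+d+3})\}N^{-(s-2)}$.
\item For the control, use $k=1$. This gives the faster rate $N^{-(s-1)}\ln(\cdot)$, which is dominated by $N^{-(s-2)}\ln^2 N$.
\end{itemize}
Composing with $\Phi^{-1}$ is again affine, so it is absorbed into the first-layer weights and biases. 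The results are therefore networks of the same architecture on $D$. The chain rule changes the $W^{k,\infty}$ norms only by constants depending on $D$ and $k\le2$. Restricting from $D$ to $Q\subset D$ can only decrease the $L^\infty$ norms of all derivatives.

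Next, simplify the logarithmic factor. Using $\ln(\beta_{\rm app}N^{s+d+3})\le|\ln\beta_{\rm app}|+(s+d+3)\ln N$ gives
\[
\max\{R^2,\ln^2(\beta_{\rm app}N^{s+d+3})\}\le C(R,\beta_{\rm app},s,d)(1+\ln^2N).
\]
This holds uniformly for $N>N_0$. The quantities $\delta$ (fixed, say $\delta=1$), $R$, and $\beta_{\rm app}$ depend only on $d,k,s$ and the fixed target functions, not on $N$. Summing the three bounds and taking $\theta,\psi,\phi$ as the parameters of these three networks bounds the infimum in the statement by $C_{\rm approx}(1+\ln^2N)N^{-(s-2)}$. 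The constant $C_{\rm approx}$ collects the $C(d,k,s,f)$ factors, the scaling constants, and the logarithmic constants, so it is independent of $N$.

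The main obstacle is checking that the constructed networks actually lie in the network classes indexed in (H4). Those classes must contain two-hidden-layer tanh networks of the widths in Theorem~\ref{thm:deryck} with dimension $d+1$. I would state this explicitly as the interpretation of the resolution parameter, as the proposition already signals by calling $N$ a construction parameter. A secondary point is that $\beta_{\rm app}$ depends on $f$ and $k$ through \cite[eq.~(75)]{deryck2021approximation}. Since only finitely many pairs $(f,k)$ occur, taking a maximum keeps all constants uniform in $N$.
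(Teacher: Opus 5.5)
Your proposal is correct and follows the paper's proof: rescale $D$ affinely to a unit cube, apply Theorem~\ref{thm:deryck} to the three extensions, absorb the logarithmic factor via $\ln^2(\beta_{\rm app}N^p)\le C(1+\ln^2N)$, and restrict to $Q$. The differences are cosmetic: the paper also reads off the $k=2$ bound for the control (since $W^{2,\infty}$ dominates $W^{1,\infty}$) where you use $k=1$, and your exponent $p=s+d+3$ correctly reflects that the theorem is applied in dimension $d+1$, whereas the paper writes $s+d+2$, which does not affect the estimate.
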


\begin{proof}
After affine rescaling of the rectangular neighborhoods to unit cubes, apply
Theorem~\ref{thm:deryck} with $k=2$ to the three assumed extensions. Its
logarithmic factor is still squared. Indeed, with $p:=s+d+2$ and the fixed
constant $\beta_{\rm app}>0$ from that theorem,
\[
\ln^2\!\bigl(\beta_{\rm app}N^p\bigr)
=\bigl(\ln\beta_{\rm app}+p\ln N\bigr)^2
\le C_{\beta_{\rm app},p}\bigl(1+\ln^2 N\bigr).
\]
Thus its right-hand side is bounded by
$C(1+\ln^2 N)N^{-(s-2)}$. Restriction gives the asserted norms on $Q$; the
$W^{2,\infty}(Q)$ estimate for the control is stronger than the $W^{1,\infty}(Q)$
term required in (H4). Notice that parabolic Schauder regularity from
Proposition~\ref{prop:regularity} is anisotropic and, by itself, does not
imply the isotropic $W^{s,\infty}(D)$ extension hypothesis used here.
\end{proof}

\subsection{Low regularity of Dirichlet boundary control (background)}
\label{app:background}

The qualitative claim in Section~\ref{sec:standing-assumptions} that
Dirichlet boundary control of parabolic equations is a genuinely
low-regularity problem for $L^2(\Sigma)$ data is substantiated by
\cite{gong2016dirichlet}, which formulates such control problems using the
very-weak solution of the parabolic state equation precisely to avoid the
fractional-Sobolev control spaces that a variational (weak-solution)
formulation would otherwise require. General background on Dirichlet
boundary control and its analytical difficulties can also be found in
\cite{lasiecka2000control,troltzsch2010optimal}.

\subsection{Space-filling collocation and probabilistic quadrature bounds}
\label{app:spacefilling}
Assumption~\ref{assump:spacefilling} is
\cite[Assumption~3.1]{shin2020convergence}. Proposition~\ref{prop:quadrature}
applies \cite[Lemma~B.1-B.2, Thm.~3.1]{shin2020convergence}
term-by-term to each of the seven residuals of
Section~\ref{sec:error-analysis} (one differential/boundary/terminal
residual per application), extending the original statement, which treats
exactly one interior and one boundary operator, to a finite union bound
over seven such applications. The finiteness (for a fixed, trained network)
of the Hölder norms appearing in (H5) follows from
\cite[Lemma~2.1]{shin2020convergence}: all derivatives of a tanh network
converge in $C^k(\overline Q)$ whenever its parameters converge and remain bounded, and
$\tanh\in C^\infty(\mathbb R)$ and the network input domain $\overline Q$
is bounded. What (H5) adds
beyond \cite[Lemma~2.1]{shin2020convergence} is that the resulting bound
$M_{\rm bc}$ does not grow with network size along the training sequence
under consideration; this uniformity is not automatic and is discussed in
Remark~\ref{rem:h5}.

\bibliographystyle{els-cas-templates/cas-model2-names}
\bibliography{ref}

@article{raissi2019physics,
  author  = {Raissi, Maziar and Perdikaris, Paris and Karniadakis, George Em},
  title   = {Physics-informed neural networks: A deep learning framework for solving forward and inverse problems involving nonlinear partial differential equations},
  journal = {Journal of Computational Physics},
  volume  = {378},
  pages   = {686--707},
  year    = {2019},
  doi     = {10.1016/j.jcp.2018.10.045}
}

@article{mowlavi2023optimal,
  author  = {Mowlavi, Saviz and Nabi, Saleh},
  title   = {Optimal control of {PDEs} using physics-informed neural networks},
  journal = {Journal of Computational Physics},
  volume  = {473},
  pages   = {111731},
  year    = {2023},
  doi     = {10.1016/j.jcp.2022.111731}
}

@article{barrystraume2026control,
  author  = {Barry-Straume, Jostein and Sarshar, Arash and Popov, Andrey A. and Sandu, Adrian},
  title   = {Physics-Informed Neural Networks for {PDE}-Constrained Optimization and Control},
  journal = {Communications on Applied Mathematics and Computation},
  volume  = {8},
  pages   = {1283--1306},
  year    = {2026},
  doi     = {10.1007/s42967-025-00499-x}
}

@article{dai2025optimality,
  author  = {Dai, Yongcheng and Jin, Bangti and Sau, Ramesh Chandra and Zhou, Zhi},
  title   = {Solving elliptic optimal control problems via neural networks and optimality system},
  journal = {Advances in Computational Mathematics},
  volume  = {51},
  number  = {4},
  pages   = {31},
  year    = {2025},
  doi     = {10.1007/s10444-025-10241-z}
}

@article{garcia2023control,
  author  = {Garc{\'i}a-Cervera, Carlos J. and Kessler, Mathieu and Periago, Francisco},
  title   = {Control of Partial Differential Equations via Physics-Informed Neural Networks},
  journal = {Journal of Optimization Theory and Applications},
  volume  = {196},
  pages   = {391--414},
  year    = {2023},
  doi     = {10.1007/s10957-022-02100-4}
}

@article{mishra2023generalization,
  author  = {Mishra, Siddhartha and Molinaro, Roberto},
  title   = {Estimates on the generalization error of physics-informed neural networks for approximating {PDEs}},
  journal = {IMA Journal of Numerical Analysis},
  volume  = {43},
  number  = {1},
  pages   = {1--43},
  year    = {2023},
  doi     = {10.1093/imanum/drab093}
}

@article{deryck2022kolmogorov,
  author  = {De Ryck, Tim and Mishra, Siddhartha},
  title   = {Error analysis for physics-informed neural networks ({PINNs}) approximating {Kolmogorov PDEs}},
  journal = {Advances in Computational Mathematics},
  volume  = {48},
  number  = {6},
  pages   = {79},
  year    = {2022},
  doi     = {10.1007/s10444-022-09985-9}
}

@article{zhang2026pinns,
  author  = {Zhang, Zhen and Liu, Shanqing and Alla, Alessandro and Darbon, J{\'e}r{\^o}me and Karniadakis, George Em},
  title   = {{PINNs in PDE Constrained Optimal Control Problems: Direct vs Indirect Methods}},
  journal = {arXiv preprint arXiv:2604.04920},
  year    = {2026}
}

@book{lasiecka2000control,
  author    = {Lasiecka, Irena and Triggiani, Roberto},
  title     = {Control Theory for Partial Differential Equations: Continuous and Approximation Theories},
  volume    = {I \& II},
  series    = {Encyclopedia of Mathematics and Its Applications, 74--75},
  publisher = {Cambridge University Press},
  address   = {Cambridge},
  year      = {2000}
}

@book{troltzsch2010optimal,
  author    = {Tr{\"o}ltzsch, Fredi},
  title     = {Optimal Control of Partial Differential Equations: Theory, Methods and Applications},
  series    = {Graduate Studies in Mathematics},
  volume    = {112},
  publisher = {American Mathematical Society},
  address   = {Providence, RI},
  year      = {2010},
  isbn      = {978-0-8218-4904-0},
  note      = {Translated from the 2005 German original by J{\"u}rgen Sprekels}
}

@article{aradaraymond2002dirichlet,
  author  = {Arada, Nadir and Raymond, Jean-Pierre},
  title   = {Dirichlet Boundary Control of Semilinear Parabolic Equations Part 1: Problems with No State Constraints},
  journal = {Applied Mathematics and Optimization},
  volume  = {45},
  number  = {2},
  pages   = {125--143},
  year    = {2002},
  doi     = {10.1007/s00245-001-0035-5}
}

@article{shin2020convergence,
  author  = {Shin, Yeonjong and Darbon, J{\'e}r{\^o}me and Karniadakis, George Em},
  title   = {On the Convergence of Physics Informed Neural Networks for Linear Second-Order Elliptic and Parabolic Type {PDEs}},
  journal = {Communications in Computational Physics},
  volume  = {28},
  number  = {5},
  pages   = {2042--2074},
  year    = {2020},
  doi     = {10.4208/cicp.OA-2020-0193}
}

@article{gong2016dirichlet,
  author  = {Gong, Wei and Hinze, Michael and Zhou, Zhaojie},
  title   = {Finite Element Method and A Priori Error Estimates for {Dirichlet} Boundary Control Problems Governed by Parabolic {PDEs}},
  journal = {Journal of Scientific Computing},
  volume  = {66},
  number  = {3},
  pages   = {941--967},
  year    = {2016},
  doi     = {10.1007/s10915-015-0051-2}
}

@article{deryck2021approximation,
  author  = {De Ryck, Tim and Lanthaler, Samuel and Mishra, Siddhartha},
  title   = {On the Approximation of Functions by Tanh Neural Networks},
  journal = {Neural Networks},
  volume  = {143},
  pages   = {732--750},
  year    = {2021},
  doi     = {10.1016/j.neunet.2021.08.015}
}

@book{lionsmagenes1972nonhomogeneous,
  author    = {Lions, Jacques-Louis and Magenes, Enrico},
  title     = {Non-Homogeneous Boundary Value Problems and Applications, Vol. II},
  publisher = {Springer-Verlag},
  year      = {1972},
  series    = {Die Grundlehren der mathematischen Wissenschaften},
  volume    = {182}
}

@book{lasieckatriggiani2000control,
  author    = {Lasiecka, Irena and Triggiani, Roberto},
  title     = {Control Theory for Partial Differential Equations: Continuous and Approximation Theories I. Abstract Parabolic Systems},
  publisher = {Cambridge University Press},
  year      = {2000}
}

@book{friedman1964parabolic,
  author    = {Friedman, Avner},
  title     = {Partial Differential Equations of Parabolic Type},
  publisher = {Prentice-Hall},
  year      = {1964}
}

@book{lieberman1996second,
  title     = {Second order parabolic differential equations},
  author    = {Lieberman, Gary M},
  year      = {1996},
  publisher = {World scientific}
}

@article{quang2026tikhonov,
  title={Tikhonov-Regularized Physics-Informed Neural Networks for Terminal-State Distributed Optimal Control of Parabolic Partial Differential Equations},
  author={Quang, Nguyen Thanh and Mai, Ta Thi Thanh},
  journal={arXiv preprint arXiv:2607.24572},
  year={2026}
}

@inproceedings{pham2025physics,
  title={The physics-informed neural networks approach to parameter identification problems in unsteady Navier-Stokes equations},
  author={Duy, Pham Nhat and Nam, Nguyen Canh and Mai, Ta Thi Thanh},
  booktitle={International Conference on Modelling, Computation and Optimization in Information Systems and Management Sciences},
  pages={164--175},
  year={2025},
  organization={Springer}
}

@article{phuong2026gradnorm,
  title={GradNorm Physics-Informed Neural Networks for Linear Elasticity: Adaptive Loss Balancing and Comparative Finite Element Analysis},
  author={Cuc, Hoang Phuong and Mai, Ta Thi Thanh and Nam, Nguyen Canh},
  journal={Smart Systems and Devices},
  volume={36},
  number={2},
  pages={029--037},
  year={2026}
}

\clearpage
\addtocounter{page}{-1}
\end{document}